\title{On invariant $1$-dimensional representations of a finite $W$-algebra}
\begin{document}
\begin{abstract}
	Let $\fg$ be a simple Lie algebra over $\CC$ and $G$ be the corresponding simply connected algebraic group. Consider a nilpotent element $e\in \fg$, the corresponding element $\chi=(e, \bullet)$ in $\fg^*$, and the coadjoint orbit $\OO=G\chi$. We are interested in the set $\fJ\fd^1(\cW)$ of codimension $1$ ideals $J\subset \cW$ in a finite $W$-algebra $\cW=U(\fg, e)$. We have a natural action of the component group $\Gamma=Z_G(\chi)/Z_G^\circ(\chi)$ on $\fJ\fd^1(\cW)$. Denote the set of $\Gamma$-stable points of $\fJ\fd^1(\cW)$ by $\fJ\fd^{1}(\cW)^{\Gamma}$. For a classical $\fg$ Premet and Topley in \cite{Premet-Topley} proved that $\fJ\fd^{1}(\cW)^{\Gamma}$ is isomorphic to an affine space. In this paper we will give an easier and shorter proof of this fact.
\end{abstract}

\maketitle 

\section{Introduction}
	\subsection{}
	Let $\fg$ be a simple Lie algebra over $\CC$ and $G$ be the corresponding simply connected algebraic group. Let $e\in \fg$ be a nilpotent element and $\chi=(e,\bullet)$ be the dual element in $\fg^*$, where $(\bullet, \bullet)$ stands for the Killing form. We fix the coadjoint orbit $\OO=G\chi$. In \cite{Premet2002} Premet defined the finite $W$-algebra $\cW=U(\fg,e)$ associated to the nilpotent element $e$. In \cite{Losev3} Losev gave an alternative definition that will be recalled in \cref{W-alg}. 

	Let $\fJ\fd(\cW)$ and $\fJ\fd^1(\cW)$ be the sets of ideals and codimension $1$ ideals in $\cW$ respectively.
	Note that every codimension $1$ ideal in $\cW$ contains the two-sided ideal $([\cW,\cW])$ generated by the commutator $[\cW,\cW]$. Therefore the set $\fJ\fd^1(\cW)$ is isomorphic to the set of codimension $1$ ideals in the commutative algebra $\cW_{ab}:=\cW/([\cW,\cW])$. By the Hilbert Nullstellensatz such ideals are classified by the points of the maximal spectrum $\Specm(\cW_{ab})$. 
	
	We set $\Gamma:=Z_G(e)/Z_G^\circ(e)$ to be the component group of the centralizer $Z_G(e)$ of $e$ and $Q$ to be the reductive part of $Z_G(e)$. The action of $Q$ on $\cW$ preserves the ideal $([\cW, \cW])$ and induces an action on $\Specm(\cW_{ab})$. Recall that $\Gamma$ is also equal to $Q/Q^\circ$, and the action of $Q^\circ$ on any $J\in \fJ\fd^1(\cW)$ is trivial because of the embedding $\fq \to \cW$, see \cite[Section 2]{Premetstabilizer} for more details. Therefore we have an action of $\Gamma$ on $\Specm(\cW_{ab})$. Let $\fJ\fd^1(\cW)^\Gamma$ be the set of $\Gamma$-invariant codimension $1$ ideals in $\cW$.
The main goal of this paper is to give a geometric proof of the following theorem.
	\begin{theorem}\cite[Theorem 2]{Premet-Topley} \label{theorem}
		Let $\fg$ be classical. Then $\fJ\fd^1(\cW)^\Gamma$ is in bijection with the set of points of an affine space.
	\end{theorem}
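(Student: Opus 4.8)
The plan is to reduce \cref{theorem} to a concrete assertion about the geometry of the Slodowy slice, and then to prove that assertion using the classification of nilpotent orbits and sheets in the classical Lie algebras.

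\emph{Step 1: reduction.} As recalled above, $\fJ\fd^1(\cW)=\Specm\cW_{ab}$ for the finitely generated commutative algebra $\cW_{ab}$, and a codimension $1$ ideal is $\Gamma$-stable exactly when the corresponding homomorphism $\cW_{ab}\to\CC$ is $\Gamma$-equivariant for the trivial action on $\CC$. Because $\Gamma$ is finite and $\CC$ has characteristic $0$, such homomorphisms are precisely those that factor through $(\cW_{ab})_\Gamma:=\cW_{ab}/(\gamma a-a:\gamma\in\Gamma,\ a\in\cW_{ab})$, so $\fJ\fd^1(\cW)^\Gamma=\Specm\bigl((\cW_{ab})_\Gamma\bigr)$ and it is enough to prove that $(\cW_{ab})_\Gamma$ is a polynomial algebra. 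Equivalently: the $\Gamma$-fixed locus of the affine variety $\Specm\cW_{ab}$ should be an affine space.

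\emph{Step 2: a geometric model, and type $A$.} By Premet's results on commutative quotients of $W$-algebras together with Losev's description of one-dimensional $\cW$-modules via parabolic induction, the variety $\Specm\cW_{ab}$ is identified, $\Gamma$-equivariantly, with the closed subvariety $X_0$ of the Slodowy slice $e+\fg^f$ (for an $\mathfrak{sl}_2$-triple $(e,h,f)$) consisting of the points lying on a $0$-dimensional symplectic leaf; equivalently, $X_0$ is the intersection of the slice with the union of the sheets of $\fg$ whose unique nilpotent orbit is $\OO$. By Lusztig--Spaltenstein induction these sheets are the $\operatorname{Ind}_{\mathfrak l}^{\fg}(\mathfrak z(\mathfrak l)+\OO_0)$ attached to pairs $(\mathfrak l,\OO_0)$ of a Levi subalgebra $\mathfrak l\subseteq\fg$ and a rigid nilpotent orbit $\OO_0\subseteq\mathfrak l$ with $\operatorname{Ind}_{\mathfrak l}^{\fg}(\OO_0)=\OO$, and by Katsylo's theorem the corresponding piece of $X_0$ is isomorphic to $\mathfrak z(\mathfrak l)/N_{\mathfrak l}$ for a finite group $N_{\mathfrak l}\subseteq\operatorname{GL}(\mathfrak z(\mathfrak l))$; the contracting Kazhdan $\CC^\times$-action and the $\Gamma$-action preserve each piece, and $\Gamma$ acts linearly on $\mathfrak z(\mathfrak l)/N_{\mathfrak l}$. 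In type $A$ the only rigid orbit in any Levi is the zero orbit, and $\operatorname{Ind}_{\mathfrak l}^{\fg}(0)$ determines $\mathfrak l$ up to conjugacy, so $X_0$ has a single piece; there $N_{\mathfrak l}$ is a product of symmetric groups permuting coordinates of $\mathfrak z(\mathfrak l)$, so $\mathfrak z(\mathfrak l)/N_{\mathfrak l}$ is again an affine space, and $\Gamma$ is trivial --- proving \cref{theorem} for $\fg$ of type $A$ essentially at once.

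\emph{Step 3: types $B$, $C$, $D$, and the main obstacle.} It remains to show that for $\fg$ of type $B$, $C$ or $D$ the fixed locus $X_0^\Gamma$ is an affine space; this has become a purely combinatorial problem about the partition $\lambda$ labelling $\OO$. I would argue as follows: (a) isolate the distinguished piece attached to the Bala--Carter Levi $\mathfrak l_0$ (for which $\OO\cap\mathfrak l_0$ is distinguished), and show, using the explicit description of rigid orbits and of induction in classical types (Kempken, Kraft--Procesi) and of $\Gamma$ as an elementary abelian $2$-group acting by sign changes, that the joint action of $N_{\mathfrak l_0}$ and $\Gamma$ on $\mathfrak z(\mathfrak l_0)$ is by signed permutations of coordinates, so that $\bigl(\mathfrak z(\mathfrak l_0)/N_{\mathfrak l_0}\bigr)^\Gamma$ is an affine space; (b) show that the $\Gamma$-fixed locus of every other piece $\mathfrak z(\mathfrak l)/N_{\mathfrak l}$ of $X_0$ is contained in $\bigl(\mathfrak z(\mathfrak l_0)/N_{\mathfrak l_0}\bigr)^\Gamma$ as a linear subspace. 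Granting (a) and (b) one gets $X_0^\Gamma=\bigl(\mathfrak z(\mathfrak l_0)/N_{\mathfrak l_0}\bigr)^\Gamma$, an affine space, and a routine Rees-algebra (associated-graded) argument then transfers this to $(\cW_{ab})_\Gamma$, which is therefore a polynomial ring; this finishes the proof. I expect (b) to be the main obstacle: it is exactly the bookkeeping of how the parity and multiplicity conditions cutting out $\Gamma$ interact with the admissible induction data $(\mathfrak l,\OO_0)$, and it is what here takes the place of the explicit twisted-Yangian presentations of $\cW_{ab}$ used by Premet and Topley.
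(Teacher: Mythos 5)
Your Step 2 is where the argument breaks. The $\Gamma$-equivariant identification of $\Specm(\cW_{ab})$ with the union of Katsylo pieces $\mathfrak{z}(\mathfrak{l})/N_{\mathfrak{l}}$ attached to the induction data of $\OO$ is not an available black box: what can be cited independently of Premet--Topley is much weaker (non-emptiness of $\Specm(\cW_{ab})$, a dimension bound in terms of sheets, and Losev's parabolic-induction maps), while the full statement you invoke is essentially Premet's conjecture on the geometry of the space of one-dimensional representations, whose proof in classical types rests on the explicit description of $\cW_{ab}$ from the very theorem you are trying to reprove. Taking it as an input makes the argument circular. Note also that there is no natural map from $\Specm(\cW_{ab})$ to the Slodowy slice to begin with: $\cW_{ab}$ is a quotient of $\cW$, its associated graded is a quotient of $\CC[S]$ which may be non-reduced, so even the "routine Rees-algebra argument" at the end would have to control a scheme structure you have not pinned down, not just an underlying variety.

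Beyond that, the two steps that carry the actual content in types $B$, $C$, $D$ --- your (a) and (b) --- are stated as expectations ("I would argue", "I expect (b) to be the main obstacle") rather than proved; (b) is precisely the partition combinatorics that in Premet--Topley is done via explicit presentations, so nothing has been gained. For comparison, the paper avoids this combinatorics entirely: it identifies $\fJ\fd^1(\cW)^\Gamma$ with the set of $G$-equivariant graded quantizations of $\OO$ (\cref{quantizations of orbit}), shows every such quantization extends uniquely to $\Spec(\CC[\OO])$ by passing to the analytic topology and lifting across the codimension $2$ singularities classified by Kraft--Procesi (this is the only place where "classical" enters, \cref{extension}), and then quotes Losev's Namikawa-style classification of quantizations of $\Spec(\CC[\OO])$ by $\fP/W$ with $W$ a reflection group (\cref{classifiaction}). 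If you want to pursue your route, you would need to first establish the identification of Step 2 without using Premet--Topley, and then actually carry out (a) and (b); as written, the proposal records a plan, not a proof.
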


	\subsection{}
		The plan of the paper is as follows. It is known \cite{Losev2} that there is a natural bijection between the set of $\Gamma$-invariant ideals $J\in \fJ\fd^1(\cW)^{\Gamma}$ and the space $\cQ(\OO)$ of formal graded $G$-equivariant quantizations of $\OO$. We want to relate the sets of the formal graded quantizations of the coadjoint orbit $\OO$ and of its affinization $\Spec(\CC[\OO])$. The set of formal graded quantizations of the affinization was computed by Losev in \cite{Losev4}. In \cref{affinization} we consider the isomorphism classes of formal graded quantizations of $\Spec(\CC[\OO])$. By \cite{Losev4} these are classified by the points of $\fP/W$ for some affine space $\fP$ and a finite group $W$. In \cref{quant orbit} following \cite{Losev2} we construct a natural bijection between the set of $\Gamma$-invariant ideals $J\in \fJ\fd^1(\cW)^{\Gamma}$ and the space $\cQ(\OO)$ of formal $G$-equivariant quantizations of $\OO$ mentioned above. In Sections \ref{analytification of quantization} and \ref{lift} we show that $\cQ(\OO)$ and $\fP/W$ are in a natural bijection. This is the main part of this paper. The construction of Section 3 gives us an algebraic quantization of an affine subscheme of $\fg^*$ with the reduced scheme $\overline{\OO}$. We show that it can be extended to a holomorphic one (i.e. a quantization in the complex-analytic topology). After that, we lift that holomorphic quantization to a quantization of $\Spec(\CC[\OO])$. That gives us a bijection between $\cQ(\OO)$ and $\fP/W$ as sets. We deduce the main theorem from that.
		
	\subsection{Acknowledgments}	
	 I am deeply grateful to Ivan Losev for the statement of the problem and for many fruitful discussions.
	 I would also like to thank Alexey Pakharev for stimulating conversations and many helpful suggestions that allowed me to improve the exposition.
		
	\section{Quantizations of $\Spec(\CC[\OO])$}\label{affinization}
	\subsection{Graded formal quantizations}\label{quantizations}
	Let $A$ be a finitely generated Poisson algebra equipped with a grading $A=\bigoplus_{i=0}^\infty A_i$ such that $A_0=\CC$, and the Poisson bracket has degree $-d$, where $d\in \ZZ_{>0}$. 
Let $\cA_h$ be a $\CC[[h]]$-algebra, flat over $\CC[[h]]$ and complete and separated in the $h$-adic topology. Assume that $\cA_h$ is equipped with a $\CC^\times$-action by $\CC$-algebra automorphisms that is rational on all quotients $\cA_h/(h^k)$ and satisfies $t.h=th$. Moreover, suppose that $[\cA_h, \cA_h]\subset h^d \cA_h$. All $\CC[[h]]$-algebras in this paper are assumed to satisfy all of these conditions. We have a skew-symmetric bracket on $\cA_h$ given by $\{a,b\}=\frac{[a,b]}{h^d}$ that induces a Poisson bracket on $\cA_h/(h)$. The action of the torus on $\cA_h$ induces a grading on the quotient $\cA_h/(h)$. Suppose that there is an isomorphism $\theta: \cA_h/(h)\simeq A$ of graded Poisson algebras. We say that $\cA_h$ together with an isomorphism $\theta$ is a graded formal quantization of $A$.  

	\begin{example}\label{UformS}
		The completed homogeneous universal enveloping algebra $U_h(\fg)=T(\fg)[[h]]/(\xi\otimes\eta-\eta\otimes\xi-h^2[\xi, \eta]\mid \xi,\eta\in \fg)$ is a formal quantization of the Poisson algebra $S(\fg)$.
	\end{example}
	
	By a Poisson scheme we mean a scheme $X$ over $\Spec(\CC)$ whose structure sheaf $\cO_X$ is equipped with a Poisson bracket. For example, for any finitely generated commutative Poisson algebra $A$ the affine scheme $X=\Spec(A)$ is a Poisson scheme. By a formal quantization of $X$ we understand a sheaf $\calD$ of $\CC[[h]]$-algebras such that $\calD$ is flat over $\CC[[h]]$ and complete and separated in the $h$-adic topology together with an isomorphism of sheaves of Poisson algebras $\theta: \calD/h\calD \simeq \cO_X$. In what follows we assume that $X$ is conical. We introduce the conical topology on $X$, where, by definition, a subset $U$ is open if and only if $U$ is Zariski open and $\CC^\times$-stable. Note that if $X$ is normal then every point admits a $\CC^\times$-stable affine open neighborhood. From now on we consider quantizations in the conical topology and demand the isomorphism $\theta:\calD/h\calD \simeq \cO_X$ to be a $\CC^\times$-invariant isomorphism of sheaves of Poisson algebras.  
	
	A morphism of two formal quantizations ($\calD_1$, $\theta_1$) and ($\calD_2$, $\theta_2$) is a $\CC^\times$-equivariant morphism $\phi: \calD_1\to \calD_2$ of $\CC[[h]]$-algebras such that $\theta_1=\theta_2\circ\phi$. Any such morphism is automatically an isomorphism of formal quantizations.
	
	We will need the following standard lemma.
	\begin{lemma}\label{affinequant}
		Let $A$ be a Poisson algebra and $X=\Spec(A)$. The map $\calD\to \Gamma(X, \calD)$ gives a bijection between the set of formal quantizations of $X$ and the set of formal quantizations of $A$.
	\end{lemma}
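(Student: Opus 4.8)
The plan is to produce the inverse map by a localization construction and to check that the two composites are the identity; the non-formal content is a noncommutative localization of $h$-adically complete algebras together with the vanishing of higher cohomology on an affine scheme.

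I would first check that $\calD\mapsto\Gamma(X,\calD)$ takes values among formal quantizations of $A$. Since $h$ is central in $\calD$ and $\calD$ is flat over $\CC[[h]]$, multiplication by $h^i$ identifies $\cO_X=\calD/h\calD$ with $h^i\calD/h^{i+1}\calD$ as sheaves on the conical topology of $X$, so each $\calD/h^k\calD$ carries a finite filtration whose successive quotients are copies of $\cO_X$. As $X$ is affine, $H^{>0}(X,\cO_X)=0$, hence $H^{>0}(X,\calD/h^k\calD)=0$; the restriction maps $\Gamma(X,\calD/h^{k+1}\calD)\to\Gamma(X,\calD/h^k\calD)$ are therefore surjective with kernel $\Gamma(X,\cO_X)=A$, the inverse system is Mittag--Leffler, and $\Gamma(X,\calD)=\varprojlim_k\Gamma(X,\calD/h^k\calD)$ is flat over $\CC[[h]]$, $h$-adically complete and separated, with $\Gamma(X,\calD)/h\Gamma(X,\calD)=A$ as graded Poisson algebras; the remaining axioms (rationality of the $\CC^\times$-action on the quotients, $[\,\cdot\,,\,\cdot\,]\subset h^d(\,\cdot\,)$) are inherited from $\calD$.

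For the inverse, given a formal quantization $\cA_h$ of $A$, I would build a sheaf $\calD$ on the conical topology of $X$, using the basis of principal opens $X_f$ with $f\in A$ homogeneous. Choose a homogeneous lift $s\in\cA_h$ of $f$; one checks that the powers of $s$ form an Ore set in each finite quotient $\cA_h/h^k\cA_h$ --- here one uses that $[\cA_h,\cA_h]\subset h^d\cA_h$, so that the commutator ideal is nilpotent modulo $h^k$, together with the fact that $s$ Poisson-commutes with itself --- and one sets $\calD(X_f):=\varprojlim_k\bigl((\cA_h/h^k\cA_h)[s^{-1}]\bigr)$. Then one verifies that $\calD(X_f)$ is independent of the chosen lift $s$ (two lifts differ by an element of $h\cA_h$) and of the presentation of the open set, that $X_f\mapsto\calD(X_f)$ satisfies the sheaf axioms, and that $\calD/h\calD\cong\cO_X$ because $(\cA_h/h\cA_h)[s^{-1}]=A[f^{-1}]$; flatness over $\CC[[h]]$ and $h$-adic completeness are built into the construction, and the bracket $\tfrac1{h^d}[\,\cdot\,,\,\cdot\,]$ localizes to the Poisson bracket of $\cO_X$.

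It remains to see that the two constructions are mutually inverse. Taking $f=1$ gives $\Gamma(X,\widetilde{\cA_h})=\varprojlim_k\cA_h/h^k\cA_h=\cA_h$ since $\cA_h$ is already complete. Conversely, for a formal quantization $\calD$ of $X$ one restricts to a principal open $X_f$ and argues, using again cohomology vanishing on the affine $X_f$ and $h$-adic completeness, that $\Gamma(X_f,\calD)$ is the completed localization of $\Gamma(X,\calD)$ at a lift of $f$, which identifies $\widetilde{\Gamma(X,\calD)}$ with $\calD$. The main obstacle is the noncommutative localization in the third paragraph: establishing the Ore condition for the powers of $s$ in $\cA_h/h^k\cA_h$ and checking that the completed localization neither depends on the lift nor fails to glue into a sheaf. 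Once this is in place, everything else is the familiar passage between an affine scheme and its coordinate ring, carried out level by level in the $h$-adic filtration.
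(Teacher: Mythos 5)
The paper states this lemma without proof (it is labeled as standard), and your argument is precisely the standard one it implicitly relies on: the direct functor $\calD\mapsto\Gamma(X,\calD)$ controlled by cohomology vanishing on the affine $X$ plus $h$-adic completeness, and the inverse given by microlocalization, i.e.\ level-by-level Ore localization of $\cA_h/h^k\cA_h$ at homogeneous lifts $s$ of $f$ (where nilpotence of $\operatorname{ad}(s)$ modulo $h^k$, coming from $[\cA_h,\cA_h]\subset h^d\cA_h$, gives the Ore condition), glued over the basis of $\CC^\times$-stable principal opens. This is correct and is the same mechanism the paper invokes whenever it ``microlocalizes'' an algebra to a sheaf on $\Spec$ of its classical limit, so no further comment is needed.
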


	Let $X$ be a complex analytic space such that the sheaf $\calH_X$ of holomorphic functions is equipped with a Poisson bracket. We will call such $X$ a Poisson analytic space. Suppose that we have an action of $\CC^\times$ on $X$. Similarly to the notion of a formal quantization of $X$ we define a formal holomorphic quantization of $X$ to be a sheaf $\calD$ of $\CC[[h]]$-algebras such that $\calD$ is flat over $\CC[[h]]$ and complete and separated in the $h$-adic topology together with a $\CC^\times$-invariant isomorphism of sheaves of Poisson algebras $\theta: \calD/h\calD \simeq \calH_X$.
	
	\begin{example}\label{diffop}
		Let $G$ be a complex Lie group. Let $\calD_G$ be a sheaf of differential operators on $G$. For any open $U$ we set $\calD_{G,h}(U)=R^\wedge_{h}(\calD_G(U))$. For every inclusion of open sets $V\subset U$ we have a restriction map $\calD_G(U)\to \calD_G(V)$ that extends to a map $R^\wedge_{h}(\calD_G(U))\to R^\wedge_{h}(\calD_G(V))$ on Rees completions. The resulting sheaf $\calD_{G,h}$ can be microlocalized to a formal quantization $\underline{\calD}_{G,h}$ on the cotangent bundle $T^*G$.

		Recall from \cite{Kashiwara2008} the notion of a $W$-algebra on a symplectic manifold $X$. In Section 2.2.3 \itshape{loc.cit.} the canonical $W$-algebra $\cW_{T^*G}$ coming from the sheaf of holomorphic differential operators on $G$ is constructed. The sheaf $\underline{\calD}_{G, hol,h}=\cW_{T^*G}(0)$ is a formal holomorphic quantization of $T^*G$.
	\end{example}
	\begin{example}\label{differential operators}
		Set $X=\fg^*$ with the canonical structure of a Poisson variety. The left-invariant vector fields on $G$ give a trivialization of the cotangent bundle $T^*G\simeq G\times \fg^*$. Therefore we have a map $\pi: T^*G\to \fg^*$. We set $\calD:=(\pi_*\underline{\calD}_{G,h})^G$ and $\calD_{hol}:=(\pi_*\underline{\calD}_{G,hol,h})^G$. These are formal and formal holomorphic quantizations of $\fg^*$ correspondingly. The formal quantization of $S(\fg)$ corresponding to $\calD$ is $\Gamma(\fg^*, \calD)=U_h(\fg)$. 
	\end{example}

\subsection{Quantizations of $\Spec(\CC[\OO])$}
	Let $X$ be a normal Poisson algebraic variety such that the regular locus $X^{reg}$ is a symplectic variety. Let $\omega^{reg}$ be the symplectic form on $X^{reg}$. Suppose that there exists a projective resolution of singularities $\rho: \widehat{X}\to X$ such that $\rho^*(\omega^{reg})$ extends to a regular (not necessarily symplectic) form on $X$. Following Beauville \cite{Beauville2000}, we say that $X$ has symplectic singularities. We will be mostly interested in the following example.
	\begin{example}\cite{Panyushev1991}
		Let $\fg$ be a semisimple Lie algebra and $\OO\subset \fg^*$ be a nilpotent orbit. Then $X=\Spec(\CC[\OO])$ has symplectic singularities.
	\end{example}
	We say that an affine Poisson variety $X$ is conical if its algebra of functions is endowed with a grading $\CC[X]=\bigoplus_{i=0}^\infty \CC[X]_i$ and a positive integer $d$ such that $\CC[X]_0=\CC$, and for any $i,j$ and $f\in \CC[X]_i$, $g\in \CC[X]_j$ we have $\{f,g\}\in \CC[X]_{i+j-d}$. Note that $\Spec(\CC[\OO])$ is conical with $d=1$. 

	Let $X$ be an affine conical Poisson variety that has symplectic singularities. Following Namikawa, we can construct a certain affine space $\fP$ (denoted by $\fh$ in \cite{Namikawa}) and a finite group $W$ acting on $\fP$ by reflections. In \cite{Losev4} Losev showed that formal graded quantizations of $X$ are classified by $\fP/W$.  
Together with the Chevalley-Shepherd-Todd's theorem this imply the 
	following description of the isomorphism classes of formal graded quantizations of $X$.
	\begin{prop}\label{classifiaction}
		Isomorphism classes of formal graded quantizations of $\Spec(\CC[\OO])$ are classified by the points of an affine space.
	\end{prop}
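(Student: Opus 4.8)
The plan is to combine Losev's classification of formal graded quantizations with the Chevalley--Shephard--Todd theorem. First, observe that $X=\Spec(\CC[\OO])$ is an affine conical Poisson variety (with $d=1$) which, by Panyushev's theorem, has symplectic singularities. Hence the result of Losev quoted above applies and yields a bijection between the set of isomorphism classes of formal graded quantizations of $X$ and the quotient $\fP/W$, where $\fP$ is the Namikawa affine space and $W$ is the associated finite group acting on $\fP$ by reflections. It thus remains to show that $\fP/W$ is (the set of points of) an affine space.

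Next I would reduce this to the linear case. Since $W$ is finite and acts by affine transformations on the complex affine space $\fP$, averaging any point over its $W$-orbit produces a $W$-fixed point; taking it as the origin identifies $\fP$, $W$-equivariantly, with a finite-dimensional complex vector space $V$ carrying a linear action of $W$, and under this identification $W$ is generated by reflections of $V$. Now the Chevalley--Shephard--Todd theorem says that for a finite subgroup $W\subset GL(V)$ generated by reflections the ring of invariants $\CC[V]^W$ is a polynomial algebra $\CC[f_1,\dots,f_n]$ with $n=\dim V$; consequently $\fP/W=\Spec\bigl(\CC[V]^W\bigr)$ is an affine space, which is the assertion of the proposition.

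The genuine mathematical content here --- symplectic singularities of $\Spec(\CC[\OO])$, the classification of quantizations by $\fP/W$, and the fact that $W$ acts as a reflection group --- is entirely imported from the cited works, so the argument is short. The only point requiring a little care is the reduction to the linear case: one must check that the affine $W$-action on $\fP$ can be conjugated to a linear one with a fixed origin, so that Chevalley--Shephard--Todd is literally applicable. This is routine, but it is the one place where ``affine space'' and ``vector space'' should not be conflated.
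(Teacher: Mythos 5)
Your proof is correct and follows essentially the same route as the paper: quote Losev's classification of formal graded quantizations of the conical symplectic singularity $X=\Spec(\CC[\OO])$ by $\fP/W$, then apply the Chevalley--Shephard--Todd theorem to the finite reflection group $W$ to conclude that $\fP/W$ is an affine space. Your extra linearization step is harmless but not really needed, since in Namikawa's construction $\fP$ is a vector space on which $W$ already acts linearly by reflections.
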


	\section{Quantizations of coadjoint orbits}\label{quant orbit}
		\subsection{Hamiltonian $G$-equivariant quantizations}
			In this section we recall constructions and results from \cite{Losev2}. All quantizations are assumed to be formal.	Let $X$ be a smooth symplectic variety over $\CC$. Then both the structure sheaf $\cO_X$ and the sheaf of holomorphic functions $\calH_X$ admit Poisson brackets $\{\bullet, \bullet\}$ induced from the symplectic form. Therefore we may consider (holomorphic) quantizations of the variety $X$. In this section the main example is $X=\OO$, the nilpotent coadjoint orbit with the Kostant-Kirillov symplectic form $\omega_{\chi}(\mu,\eta)=\chi([\mu,\eta])$. The induced Poisson bracket has degree $-1$. 
			
			Suppose that we have an action of a connected algebraic group $G$ on $X$. We say that a (holomorphic) quantization $\calD$ is $G$-equivariant if the action of $G$ on $\cO_X$ (resp. $\calH_X$) lifts to the action on $\calD$ by algebra automorphisms such that $h$ is $G$-invariant. For $\xi\in \fg$ let $\xi_X$ be the derivation of $\cO_X$ (resp. $\calH_X$) induced by the action of $G$.
	
			Recall that a map $\mu: X\to \fg^*$ is called the moment map if $\{\mu^*(\xi), \bullet\}=\xi_X$. From now on suppose that $X$ is equipped with a moment map $\mu$. For $X=\OO$ the natural embedding $\OO\hookrightarrow \fg^*$ is a moment map. Let $\calD$ be a $G$-equivariant quantization of $X$. We say that a map $\phi_h: \fg\to \Gamma(X,\calD)$ is a quantum comoment map if $\frac{1}{h}[\phi_h(\xi), \bullet]=\xi_{\calD}$ for all $\xi\in \fg$ and $\phi_h$ coincides with $\mu^*$ modulo $h$. We say that $\calD$ is Hamiltonian if it is equipped with a quantum comoment map $\phi_h$.
			
			In this section we will study $G$-equivariant Hamiltonian (holomorphic) quantizations of $\OO$. Since the action of $G$ is transitive, the idea is to reduce a quantization to its fiber at a point. To do this we will need to use quantum jet bundles. 
		\subsection{Quantum jet bundles}\label{quantum jet bundles}
			Let us first recall the notion of the jet bundle $J^{\infty}\cO_X$. Let $I_\Delta\subset \cO_{X\times X}$ be the ideal of the diagonal $X\subset X\times X$. We set $\cO_{X\times X}^\wedge=\varprojlim_{k\to \infty} \cO_{X\times X}/I_\Delta^k$. Let $\pi_1$, $\pi_2$ be projection maps from $X\times X$ to its factors. We set $J^{\infty}\cO_X=\pi_{1*}(\cO_{X\times X}^\wedge)$. Note that the fiber of $J^\infty\cO_X$ at any point is non-canonically isomorphic to the algebra of formal series in $\dim X$ variables. Let $\nabla$ be a connection on $\pi_1^*(\cO_{X\times X})$ given by $\nabla_\xi(f\otimes g)=\xi(f)\otimes g$. We can uniquely extend it to a continuous in $I_\Delta$-adic topology flat connection on $J^\infty \cO_X$. To simplify notations we will also denote it by $\nabla$. 
			
			Recall that $X$ is a symplectic variety. The sheaf $\cO_{X\times X}$ has a natural $\cO_X$-linear (corresponding to the first factor) Poisson bracket that extends to one on $J^\infty\cO_X$. The bracket on the sheaf $\cO_X$ of flat sections coincides with the initial one. The fiber of $J^\infty\cO_X$ at a point $x$ is the algebra $\AA=\CC[[x_1,\ldots, x_n, y_1,\ldots, y_n]]$, where $n=\frac{1}{2}\dim(X)$ with the Poisson bracket given by $\{x_i, x_j\}=0$, $\{y_i, y_j\}=0$ and $\{x_i, y_j\}=\delta_{ij}$. Note that $\AA$ is the completion of the stalk $\cO_x$ with respect to the maximal ideal.
			
			Analogously we can define the holomorphic jet bundle $J^\infty\calH_X$. The fiber of $J^\infty \calH_X$ will be a completion of the stalk $\calH_x$ with respect to the maximal ideal. It is a standard fact that this completion is the same algebra $\AA$. 
			 	
			Now let $\calD$ be a quantization of $X$. We can define $J^\infty \calD$ in the following way. We have the quotient map $\cO_X\otimes \calD\to \cO_X\otimes \cO_X$. Set $\widetilde{I}_\Delta$ to be the inverse image of $I_\Delta$. We define $(\cO_X\otimes \calD)^\wedge=\varprojlim_{k\to \infty}(\cO_X\otimes \calD)/\widetilde{I}_\Delta^k$ and $J^\infty \calD=\pi_{1*}((\cO_X\otimes \calD)^\wedge)$.
			\begin{definition}\label{quantum jet}
				A quantum jet bundle is a triple ($\fD$, $\widetilde{\nabla}$, $\Theta$), where:
				
				$\bullet$ $\fD$ is a pro-coherent sheaf (i.e. limit of coherent sheaves) of $\cO_X[[h]]$-algebras such that $[\fD, \fD]\subset h^2\fD$;
				
				$\bullet$ $\widetilde{\nabla}$ is a flat $\CC[[h]]$-linear connection on $\fD$ such that $\widetilde{\nabla}_\xi$ is a derivation of $\fD$ for all $\xi\in \Vect(X)$;
				
				$\bullet$ $\Theta$ is an isomorphism $\fD/h\fD\to J^\infty \cO_X$ of sheaves of Poisson algebras that intertwines the connections.
			\end{definition}
			A morphism of two quantum jet bundles ($\fD_1$, $\widetilde{\nabla}_1$, $\Theta_1$) and ($\fD_2$, $\widetilde{\nabla}$, $\Theta$) is a morphism $\Phi: \fD_1\to \fD_2$ of sheaves of $\cO_X[[h]]$-algebras intertwining the connections, and  such that $\Theta_1=\Theta_2\circ \Phi$. By Nakayama's lemma any morphism of two quantum jet bundles is an isomorphism.
			
		One can check that for any quantization $\calD$ the corresponding jet bundle $J^\infty \calD$ is a quantum jet bundle. Moreover, there is the following proposition.
		
		\begin{prop}\cite[Lemma 3.4]{BK}\label{jet quantization}
			The assignments
			
			$\bullet$ $\calD\to J^\infty\calD$,
			
			$\bullet$ $\fD\to \fD^{\widetilde{\nabla}}$\\
			define mutually quasi-inverse equivalences between the category of quantizations of $X$ and the category of quantum jet bundles on $X$.
		\end{prop}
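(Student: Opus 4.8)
The plan is to build the two functors explicitly and verify they are mutually quasi-inverse, in the spirit of \cite{BK}. Given a quantization $\calD$, equip $J^\infty\calD$ with the \emph{Grothendieck connection} $\widetilde\nabla$, namely the unique $\CC[[h]]$-linear flat connection extending the connection $\nabla_\xi(f\otimes a)=\xi(f)\otimes a$ on $\cO_X\otimes\calD$ to the $\widetilde{I}_\Delta$-adic completion, together with the tautological isomorphism $\Theta\colon J^\infty\calD/hJ^\infty\calD\xrightarrow{\sim}J^\infty(\calD/h\calD)=J^\infty\cO_X$. That $(J^\infty\calD,\widetilde\nabla,\Theta)$ is a quantum jet bundle in the sense of \cref{quantum jet} is routine: the estimate $[J^\infty\calD,J^\infty\calD]\subset h^2J^\infty\calD$ is inherited from $\calD$ and survives the completion, flatness of $\widetilde\nabla$ is flatness of the Grothendieck connection, each $\widetilde\nabla_\xi$ is a derivation because $\nabla_\xi$ is one on $\cO_X\otimes\calD$, and $\Theta$ intertwines the connections by construction. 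In the other direction, to a quantum jet bundle $(\fD,\widetilde\nabla,\Theta)$ one assigns its sheaf of flat sections $\fD^{\widetilde\nabla}:=\ker\widetilde\nabla$; since each $\widetilde\nabla_\xi$ is a derivation this is a subsheaf of $\CC[[h]]$-algebras, and $\Theta$ restricts to a map $\fD^{\widetilde\nabla}/h\fD^{\widetilde\nabla}\to(J^\infty\cO_X)^{\nabla}$.

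The content of the statement is that $\fD^{\widetilde\nabla}$ is an honest quantization: a \emph{coherent} sheaf of $\CC[[h]]$-algebras (not merely a pro-coherent one), flat over $\CC[[h]]$, complete and separated in the $h$-adic topology, and with $\fD^{\widetilde\nabla}/h\fD^{\widetilde\nabla}\xrightarrow{\sim}\cO_X$ --- equivalently, that $(J^\infty\cO_X)^{\nabla}=\cO_X$ and that this persists after deforming in $h$. This is the \emph{formal Poincar\'e lemma} for jet bundles. Working locally on $X$ and choosing \'etale coordinates, the connection $\widetilde\nabla$ is topologically nilpotent along the fibre direction of the jet bundle (whose fibre is the power series algebra $\AA$), so mod $h$ the associated de Rham complex of $J^\infty\cO_X$ is a resolution concentrated in degree $0$ with $H^0=\cO_X$. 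One then lifts this through the $h$-adic filtration: by induction on $k$, treating each graded piece $h^i\fD/h^{i+1}\fD$ with the classical Poincar\'e lemma for $J^\infty\cO_X$, one obtains that $\fD^{\widetilde\nabla}/h^k\fD^{\widetilde\nabla}$ is coherent, flat over $\CC[h]/(h^k)$, and has the expected reduction; passing to the inverse limit and using $h$-adic completeness of $\fD$ yields the claim. I expect this local trivialisation and induction --- i.e. controlling flat sections of $\fD$ well enough to see they form a coherent sheaf with the correct $h=0$ fibre --- to be the only real obstacle; everything else is formal.

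Finally, one checks the two functors are mutually quasi-inverse. For a quantization $\calD$, the ``constant jet'' assignment $a\mapsto 1\otimes a$ defines a map $\calD\to J^\infty\calD$ whose image is killed by $\widetilde\nabla$, and by the formal Poincar\'e lemma above it identifies $\calD$ with $(J^\infty\calD)^{\widetilde\nabla}$, compatibly with the reductions mod $h$, so $(J^\infty\calD)^{\widetilde\nabla}\cong\calD$. For a quantum jet bundle $\fD$, extending a flat section to its full jet gives a morphism $J^\infty(\fD^{\widetilde\nabla})\to\fD$ of quantum jet bundles; it reduces mod $h$ to the identity of $J^\infty\cO_X$ with its Grothendieck connection (using the previous paragraph), hence by the remark following \cref{quantum jet} --- any morphism of quantum jet bundles is an isomorphism --- it is an isomorphism. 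Naturality of both assignments is immediate from the constructions, which completes the argument.
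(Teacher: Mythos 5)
The paper offers no proof of this proposition --- it is imported verbatim from \cite[Lemma 3.4]{BK} --- and your argument is essentially the standard one used there: jets with the Grothendieck connection in one direction, flat sections in the other, with the fibrewise formal Poincar\'e lemma plus the $h$-adic induction supplying the real content, and the remark that any morphism of quantum jet bundles is an isomorphism closing the quasi-inverse check. This is correct as an outline; the only small inaccuracies are that the paper's notion of a formal quantization of $X$ does not require coherence (so that part of the burden you assign yourself is spurious), and that the identification $J^\infty\calD/hJ^\infty\calD\simeq J^\infty\cO_X$ is not quite tautological --- it uses $\CC[[h]]$-flatness of $\calD$ to commute the $\widetilde{I}_\Delta$-adic completion with reduction modulo $h$.
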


	\subsection{Hamiltonian quantum jet bundles}\label{ham}	
		We are interested in the quantum jet bundles corresponding to Hamiltonian quantizations. We say that a quantum jet bundle ($\fD$, $\widetilde{\nabla}$, $\Theta$) is $G$-equivariant if $\fD$ is equipped with an action of $G$ such that $\widetilde{\nabla}$ and $h$ are $G$-invariant and $\Theta: \fD/h\fD\to J^\infty \cO_X$ is $G$-equivariant. One can check that assignments from \cref{jet quantization} define quasi-inverse equivalences between the category of $G$-equivariant quantizations of $X$ and the category of $G$-equivariant quantum jet bundles on $X$.
		
		Now suppose that $X$ is endowed with a moment map $\mu: X\to \fg^*$. For $\xi\in \fg$ we set $\Phi(\xi)=\pi_2^*(\mu^*(\xi))\in \Gamma(X, J^\infty \cO_X)^\nabla$. We have the left, right and diagonal actions of $G$ on $X\times X$ which give the maps $\xi\to \xi_{X\times X}^l, \xi_{X\times X}^r, \xi_{X\times X}=\xi_{X\times X}^l+\xi_{X\times X}^r:$ $\fg\to \Der(\cO_{X\times X})$. The left action the connection on $\cO_{X\times X}$, the right one comes from the moment map. Therefore we have $\xi_{J^\infty \cO_X} = \nabla_{\xi_X}+\{\mu^*(\xi), \bullet\}$. We say that a $G$-equivariant quantum jet bundle ($\fD$, $\widetilde{\nabla}$, $\Theta$) is Hamiltonian if it is equipped with a map $\Phi_h: \fg\to \Gamma(X, \fD)^{\widetilde{\nabla}}$ such that $\xi_{\fD} = \widetilde{\nabla}_{\xi_X}+\frac{1}{h^2}[\Phi_h(\xi), \bullet]$ and $\Theta(\Phi_h(\xi))=\Phi(\xi)$. By \cref{jet quantization} they correspond to Hamiltonian quantizations.
		
		\begin{lemma}\cite[Proposition 3.4]{Losev2}\label{quantum comoment}
			Suppose that $H^1_{\DR}(X)={0}$. Then every $G$-equivariant quantization $\calD$ of $X$ admits a unique quantum comoment map $\phi_h: \fg\to \Gamma(X,\calD)$.
		\end{lemma}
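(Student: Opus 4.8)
The plan is to pass to quantum jet bundles and reduce the statement to the vanishing of a de Rham obstruction.

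By the $G$-equivariant form of \cref{jet quantization}, the quantization $\calD$ corresponds to a $G$-equivariant quantum jet bundle $(\fD,\widetilde{\nabla},\Theta)$, and a quantum comoment map for $\calD$ is the same datum as a map $\Phi_h\colon\fg\to\Gamma(X,\fD)^{\widetilde{\nabla}}$ turning $(\fD,\widetilde{\nabla},\Theta)$ into a Hamiltonian quantum jet bundle in the sense of \cref{ham}. So it suffices to construct such a $\Phi_h$ and prove it is unique. For $\xi\in\fg$ put $D_\xi:=\xi_\fD-\widetilde{\nabla}_{\xi_X}$, a $\CC[[h]]$-linear derivation of $\fD$; using flatness and $G$-invariance of $\widetilde{\nabla}$ one checks $[D_\xi,\widetilde{\nabla}_\eta]=0$ for all $\eta\in\Vect(X)$, so $D_\xi$ preserves $\widetilde{\nabla}$-flat sections and, under $\calD\simeq\fD^{\widetilde{\nabla}}$, acts as the $G$-action derivation $\xi_\calD$ (note $\widetilde{\nabla}_{\xi_X}$ kills flat sections). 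By the formula $\xi_{J^\infty\cO_X}=\nabla_{\xi_X}+\{\Phi(\xi),\bullet\}$ of \cref{ham}, $D_\xi$ reduces modulo $h$ to the Hamiltonian vector field $\{\mu^*(\xi),\bullet\}$. Thus the required $\Phi_h(\xi)$ is a $\widetilde{\nabla}$-flat element with $\tfrac1{h^d}\,\mathrm{ad}(\Phi_h(\xi))=D_\xi$ and $\Theta(\Phi_h(\xi))=\Phi(\xi)$.

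\emph{Existence.} The key point is that $D_\xi$ is inner. Locally the fibres of $\fD$ are formal Weyl algebras, all of whose $\CC[[h]]$-linear derivations are inner; in the jet picture the discrepancies between local potentials are $\widetilde{\nabla}$-flat, hence locally constant, so the obstruction to a global flat potential lies in $H^1(X,\underline{\CC[[h]]})=H^1_{\DR}(X)\otimes\CC[[h]]$ (this is the Hochschild cohomology computation $\mathrm{HH}^1(\calD)\cong H^1_{\DR}(X)\otimes\CC[[h]]$, see \cite{BK}), which vanishes by hypothesis. Hence $D_\xi=\tfrac1{h^d}\,\mathrm{ad}(a_\xi)$ for some flat $a_\xi\in\Gamma(X,\calD)$, unique up to $Z(\Gamma(X,\calD))$; and $Z(\Gamma(X,\calD))=\CC[[h]]$ because $X$ is connected and symplectic, so the Poisson centre of $\cO(X)$ is $\CC$, which lifts through the $h$-adic filtration. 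Reducing modulo $h$ we get $a_\xi\equiv\mu^*(\xi)+c(\xi)$ with $c(\xi)\in\CC$, and after subtracting $c(\xi)$ we may assume $\Theta(a_\xi)=\Phi(\xi)$; then $a_\xi$ is determined up to $h\CC[[h]]$. Since $a_{\xi+\eta}$ and $a_\xi+a_\eta$ satisfy the same constraints, $\xi\mapsto a_\xi$ is linear modulo $h\CC[[h]]$-valued maps, so fixing the values on a basis of $\fg$ and extending linearly produces a $\CC$-linear $\Phi_h$ with $\tfrac1{h^d}[\Phi_h(\xi),\bullet]=\xi_\calD$ and $\Phi_h\equiv\mu^*\bmod h$. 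One can moreover arrange $\Phi_h$ to intertwine $[\xi,\eta]$ with $\tfrac1{h^d}[\Phi_h(\xi),\Phi_h(\eta)]$, the obstruction being a $\CC[[h]]$-valued $2$-cocycle, hence a coboundary since $H^2(\fg;\CC[[h]])=0$.

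\emph{Uniqueness, and the main obstacle.} Two quantum comoment maps $\Phi_h,\Phi_h'$ differ by $\delta(\xi):=\Phi_h(\xi)-\Phi_h'(\xi)$ with $\tfrac1{h^d}\,\mathrm{ad}(\delta(\xi))=0$, so $\delta(\xi)\in Z(\Gamma(X,\calD))=\CC[[h]]$ and $\delta(\xi)\equiv0\bmod h$; using compatibility with the bracket together with $\fg=[\fg,\fg]$ (so that $\delta$ is simultaneously a $1$-cocycle and a coboundary), or a $\CC^\times$-weight count, one gets $\delta=0$. The step I expect to be the real work is the globalization in the existence part — upgrading the local, fibrewise Weyl-algebra fact that $D_\xi$ is inner to a single global flat potential — and it is exactly there that the hypothesis $H^1_{\DR}(X)=0$ enters. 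Finally, nilpotent orbits satisfy $H^1_{\DR}(\OO)=0$, so the lemma applies to $X=\OO$.
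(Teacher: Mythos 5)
The paper offers no proof of this lemma at all: it is imported verbatim from \cite{Losev2}, so there is no in-paper argument to compare yours with. Judged on its own, your sketch follows the standard jet-bundle/Fedosov route (fibrewise innerness of $\xi_{\calD}$ on the formal Weyl-algebra fibres, a de Rham obstruction to a global flat potential, triviality of the centre, Whitehead's lemma to correct the bracket), which is indeed how this statement is proved in the literature, and you correctly flag that the identification of the obstruction space is where the real content sits.

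Two points need tightening. First, uniqueness: with the definition exactly as written in the paper --- only $\frac{1}{h}[\phi_h(\xi),\bullet]=\xi_{\calD}$ and $\phi_h\equiv\mu^*$ modulo $h$ --- uniqueness is literally false, since adding any linear map $\fg\to h\CC[[h]]$ to $\phi_h$ yields another comoment map. Your fix via compatibility with the bracket (equivalently, $G$-equivariance of $\phi_h$) together with $\fg=[\fg,\fg]$ is the right one and is what the cited definition builds in; but the alternative ``$\CC^\times$-weight count'' you offer does not work: $\lambda h^k$ is homogeneous and can have exactly the degree of $\mu^*(\xi)$ (for $\OO$ one has $\deg h=1=\deg\mu^*(\xi)$), so gradedness alone does not kill $\delta$. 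You should commit to the equivariance/bracket argument. Second, in the existence step, the phrase ``discrepancies between local flat potentials are locally constant, so the obstruction lies in $H^1(X,\underline{\CC[[h]]})$'' is shaky in the topology actually used here: in the conical (Zariski) topology the constant sheaf is flasque on an irreducible variety, so a naive \v{C}ech argument with it would ``prove'' existence with no hypothesis, the catch being that flat local potentials are only guaranteed formally or analytically locally. The correct formulation is the one you give parenthetically --- outer $\CC[[h]]$-linear derivations of a quantization of a symplectic variety are classified by $H^1_{\DR}(X)\otimes\CC[[h]]$ (Bezrukavnikov--Kaledin, noncommutative period map) --- and your proof is acceptable only insofar as it leans on that result rather than on the constant-sheaf picture; as you note yourself, proving that identification is the main work hidden in the lemma.
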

		Note that for the homogeneous space $X=\OO$ of the group $G$ we have $H^1_{\DR}(X)=0$. Therefore any $G$-equivariant quantum jet bundle ($\fD$, $\widetilde{\nabla}$, $\Theta$) on $\OO$ admits a unique quantum comoment map $\Phi_h: \fg\to \Gamma(X,\fD)^{\widetilde{\nabla}}$.
		
		The map $\Phi_h$ extends to an algebra homomorphism $U_h(\fg)\to \Gamma(X,\fD)^{\widetilde{\nabla}}$. By $\cO_X$-linearity we extend it to a homomorphism of sheaves $\cO_X\otimes U_h(\fg)\to \fD$. We have a map $(\cO_X\otimes U_h(\fg))/h(\cO_X\otimes U_h(\fg))\simeq \cO_X\otimes S(\fg)\to \cO_X\otimes \CC[X]$, where the last map is induced by $\mu^*: \fg\to \CC[X]$. Let $\widetilde{I}_{\mu,\Delta}\subset \cO_X\otimes U_h(\fg)$ be the inverse image of $I_\Delta\subset \cO_X\otimes \CC[X]$. We set $J^\infty U_h=\varprojlim_{k\to \infty}\cO_X\otimes U_h(\fg)/\widetilde{I}_{\mu,\Delta}^k$. It is endowed with a natural flat connection $\widetilde{\nabla}_U$. The quantum comoment map $\Phi_h$ extends to a continuous homomorphism $\Phi_h: J^\infty U_h\to \fD$ intertwining the connections. This map plays a crucial role in the classification of $G$-equivariant quantizations of $\OO$.
		
		\subsection{Finite $W$-algebra}\label{W-alg}
		In this section we will give a definition of the $W$-algebra $\cW$ following \cite{Losev3}. 
		
		Recall from \cref{UformS} that $U_h(\fg)$ is a formal quantization of $S(\fg)$, so $U_h(\fg)/(h)\simeq S(\fg)$. Let $m_{\chi}\subset S(\fg)\simeq \CC[\fg^*]$ be the maximal ideal corresponding to the point $\chi$ and $\widetilde{m}_\chi$ be its preimage in $U_h(\fg)$. We have the completion of $U_h(\fg)$ in the point $\chi$ defined by ${U}^{\wedge}_{h}=\varinjlim_{k\to \infty} U_h(\fg)/\widetilde{m}_\chi^k$. Recall that for a finite-dimensional vector space $V$ with a symplectic form $\omega$ we have the homogeneous Weyl algebra $\AA_h(V):=T(V)[[h]]/(u\otimes v-v\otimes u-h^2\omega(u,v))$. Let $\AA_{h}^+(V)=\oplus_{i>0}\AA_{h,i}(V)$ be the augmentation ideal of $\AA_h(V)$. We denote the completion of $\AA_h(V)$ with respect to the ideal $\AA_h^+(V)$ by $\AA^\wedge_{h}(V)$. 

		We have a natural action of the centralizer $Z_G(e)\subset G$ on $U_h(\fg)$ that fixes the ideal $\widetilde{m}_\chi$ and therefore extends to ${U}^{\wedge}_{h}$. Set $Q$ to be the reductive part of $Z_G(e)$. We have the following decomposition due to Losev \cite[Theorem 3.3.1]{Losev3}. 
		\begin{prop}
			There exists a $\CC[[h]]$-algebra $\cW_h$ with an action of $Q\times \CC^\times$ and a $Q\times \CC^\times$-invariant isomorphism $U_h^\wedge\simeq \AA_h^\wedge(V)\widehat{\otimes}_{\CC[[h]]}\cW_h$ for the tangent space $V=T_\chi \OO$ with a symplectic action of $Q$. 
		\end{prop}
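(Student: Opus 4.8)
The plan is to lift the classical Darboux--Weinstein decomposition of $\fg^*$ near $\chi$ to the quantum level, using the Kazhdan $\CC^\times$-action to control the convergence of an inductive construction. First I would set up the classical picture. Fix an $\mathfrak{sl}_2$-triple $(e,h,f)$ through $e$ and take $Q=Z_G(e)\cap Z_G(h)\cap Z_G(f)$ for the reductive part of $Z_G(e)$. The $\operatorname{ad}(h)$-grading $\fg=\bigoplus_i\fg(i)$ is $Q$-stable, and one has the $Q$-stable decomposition $\fg=\fg^e\oplus[\fg,f]$. Standard $\mathfrak{sl}_2$-theory shows the skew form $(x,y)\mapsto\chi([x,y])$ is nondegenerate on $[\fg,f]$, and the resulting symplectic $Q$-module is canonically identified with $V=T_\chi\OO$ with its Kostant--Kirillov form. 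Identifying $\fg\cong\fg^*$ via the Killing form, the Slodowy slice $\SS=e+\fg^f$ is transverse to $\OO$ at $\chi$, the symplectic leaf through $\chi$ is $\OO$, and the transverse Poisson structure lives on $\SS$; hence the completion of $S(\fg)=\CC[\fg^*]$ at $\chi$ decomposes as a completed tensor product of Poisson algebras
\[
  S(\fg)^{\wedge_\chi}\ \simeq\ \CC[V]^{\wedge_0}\,\widehat{\otimes}\,\CC[\SS]^{\wedge_\chi},
\]
and this can be arranged $Q\times\CC^\times$-equivariantly, where $\CC^\times$ acts via the Kazhdan cocharacter (the scaling action twisted by $\gamma\colon\CC^\times\to G$ with $d\gamma(1)=h$), which fixes $\chi$, acts on $m_\chi/m_\chi^2$ with positive weights, and contracts $\SS$ to $\chi$.

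Next I would produce the quantum embedding. The Kazhdan cocharacter lifts to a $\CC^\times$-action on $U_h(\fg)$ with $t.h=th$ preserving $\widetilde{m}_\chi$, hence extending to $U_h^\wedge$ and making it, together with $Q$, a complete filtered algebra with contracting torus action. I construct a $Q\times\CC^\times$-equivariant embedding $\iota\colon\AA_h^\wedge(V)\hookrightarrow U_h^\wedge$ as follows. Pick a $Q$-stable, $\operatorname{ad}(h)$-graded basis $x_1,\dots,x_{2n}$ of $[\fg,f]$ and set $x_i^{(0)}=x_i-\chi(x_i)\in\widetilde{m}_\chi$. Since $[x_i,x_j]=h^2[x_i,x_j]_{\fg}$ and $[x_i,x_j]_{\fg}\equiv\chi([x_i,x_j]_{\fg})=\omega(x_i,x_j)$ modulo $\widetilde{m}_\chi$, the $x_i^{(0)}$ satisfy the Weyl relations only to leading order. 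I then correct them inductively, $x_i^{(k+1)}=x_i^{(k)}+(\text{a term of strictly larger Kazhdan degree})$, chosen to annihilate the discrepancy $[x_i^{(k)},x_j^{(k)}]-h^2\omega(x_i,x_j)$; the obstruction to each correction sits in a positive-degree Hochschild cohomology group, which vanishes precisely because the contracting $\CC^\times$-action has no invariants in positive degree, and the correction is made $Q$-equivariantly by averaging since $Q$ is reductive. The resulting limits $\hat x_i=\lim_k x_i^{(k)}$ converge in the $h$-adic and $\widetilde{m}_\chi$-adic topologies and define $\iota$, with exact Weyl relations $[\hat x_i,\hat x_j]=h^2\omega(x_i,x_j)$.

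Finally I would identify $\cW_h$ and prove the decomposition. Set $\cW_h:=Z_{U_h^\wedge}\bigl(\iota(\AA_h^\wedge(V))\bigr)$; as the centralizer of a $Q\times\CC^\times$-stable subalgebra it is $Q\times\CC^\times$-stable, and it carries the induced $\CC[[h]]$-algebra structure and $Q\times\CC^\times$-action. Since $\AA_h^\wedge(V)$ is a completed Weyl algebra, multiplication gives an algebra homomorphism
\[
  \AA_h^\wedge(V)\,\widehat{\otimes}_{\CC[[h]]}\,\cW_h\ \longrightarrow\ U_h^\wedge ,
\]
which is $Q\times\CC^\times$-equivariant. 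Both sides are flat, complete and separated over $\CC[[h]]$ and carry complete filtrations coming from the Kazhdan grading. Reducing modulo $h$, the map becomes $\CC[V]^{\wedge_0}\widehat{\otimes}\bigl(Z_{S(\fg)^{\wedge_\chi}}\CC[V]\bigr)\to S(\fg)^{\wedge_\chi}$, i.e. the classical decomposition of the first step, the Poisson centralizer of $\CC[V]$ being $\CC[\SS]^{\wedge_\chi}$. Thus the map is an isomorphism modulo $h$, and the complete-filtered analogue of Nakayama's lemma promotes this to an isomorphism of $\CC[[h]]$-algebras. In particular $\cW_h$ is a graded formal quantization of $\CC[\SS]^{\wedge_\chi}$ --- the $h$-adic avatar of the finite $W$-algebra --- and the asserted $Q\times\CC^\times$-invariant isomorphism $U_h^\wedge\simeq\AA_h^\wedge(V)\widehat{\otimes}_{\CC[[h]]}\cW_h$ holds. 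Conjugating the $\mathfrak{sl}_2$-triple and using the uniqueness built into the inductive construction shows $\cW_h$ is independent of all choices up to isomorphism.

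The main obstacle is the second step: producing the $Q\times\CC^\times$-equivariant embedding $\iota$ with \emph{exact} Weyl relations while simultaneously keeping equivariance and $h$-adic (and $\widetilde{m}_\chi$-adic) convergence under control. This is exactly where the contracting Kazhdan $\CC^\times$-action is indispensable: it forces both the cohomological obstructions and the "correction terms" to lie in strictly positive degree, so the iteration terminates in the completion, and reductivity of $Q$ then upgrades everything to an equivariant statement. Everything else --- the classical transverse Poisson structure, the completed Nakayama argument, and the identification of the centralizer modulo $h$ --- is routine once this embedding is in hand.
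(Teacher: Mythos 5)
The paper offers no proof of this proposition---it is quoted directly from Losev \cite[Theorem 3.3.1]{Losev3}---and your proposal is essentially a reconstruction of Losev's own argument: an equivariant quantum Darboux--Weinstein embedding of $\AA_h^\wedge(V)$ into $U_h^\wedge$ built by successive approximation, with convergence controlled by the contracting Kazhdan $\CC^\times$-action and equivariance by averaging over the reductive group $Q$, followed by taking $\cW_h$ to be the centralizer. The one step you undersell as routine is the final isomorphism $\AA_h^\wedge(V)\widehat{\otimes}_{\CC[[h]]}\cW_h\simeq U_h^\wedge$, which is cleaner to obtain from the decomposition lemma for completed Weyl algebras (building the inverse from the symbol calculus of $\AA_h^\wedge(V)$) than from reduction modulo $h$ plus Nakayama, since the latter already presupposes the nontrivial fact that $\cW_h/h\cW_h$ surjects onto the transverse Poisson algebra $\CC[\SS]^{\wedge_\chi}$.
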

		We denote the action of $\CC^\times$ on $\cW_h$ by $\gamma$. Let $\cW_{h, fin}$ be the subalgebra of $\cW_h$ spanned by the elements $w\in \cW_h$ such that $\gamma(t)w=t^iw$ for some $i$. We set $\cW=\cW_{h,fin}/(h-1)$.
		
		For an ideal $\cJ\in \fJ\fd(\cW)$ let $\cJ_h=\bigoplus_{k\ge 0} \cJ\cap \cW_{\le k}h^k$ be the corresponding ideal in the Rees algebra $\cW_{h,fin}$. We set $\cJ^\wedge_{h}\subset \cW_h$ to be the completion of $\cJ_h$.
		
		We have the following important criteria of an ideal in $\cW$ to be of codimension $1$.
		\begin{prop}\cite{Losev1}\label{U-W}
			For an ideal $J\in \fJ\fd(\cW)$ we set $I^\wedge_h=\AA_h^\wedge(V)\widehat{\otimes}_{\CC[[h]]} J^\wedge_h$. Then $J$ is of codimension $1$ if and only if $(U^\wedge_h/I^\wedge_h)/h(U^\wedge_h/I^\wedge_h)$ coincides with the completion $\CC[\OO]_{\chi}^\wedge$ of $\CC[\OO]$ at $\chi$. 
		\end{prop}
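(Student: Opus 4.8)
The plan is to deduce everything from the decomposition $U_h^\wedge\simeq\AA_h^\wedge(V)\widehat\otimes_{\CC[[h]]}\cW_h$ recalled above; once that is in hand the argument is bookkeeping with completions and gradings. First I would observe that, by construction, this isomorphism carries $I_h^\wedge$ to $\AA_h^\wedge(V)\widehat\otimes_{\CC[[h]]}J_h^\wedge$, and that $\AA_h^\wedge(V)$ is flat (indeed topologically free) over $\CC[[h]]$; hence $U_h^\wedge/I_h^\wedge\simeq\AA_h^\wedge(V)\widehat\otimes_{\CC[[h]]}(\cW_h/J_h^\wedge)$, and reducing modulo $h$ gives
\[
(U_h^\wedge/I_h^\wedge)/h(U_h^\wedge/I_h^\wedge)\;\simeq\;\bigl(\AA_h^\wedge(V)/h\bigr)\,\widehat\otimes_\CC\,\bigl(\cW_h/J_h^\wedge\bigr)/h .
\]
Since $V$ is symplectic, $\AA_h^\wedge(V)/h\simeq\CC[[V]]$, the ring of formal functions at the origin of $V$. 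On the other side, $\CC[\OO]_\chi^\wedge\simeq\CC[[V]]$ as well: $\chi$ is a smooth point of $\OO$ and $\dim V=\dim T_\chi\OO=\dim\OO$, so the completion of the (possibly non-normal) ring $\CC[\OO]$ at the maximal ideal of $\chi$ depends only on the regular local ring $\cO_{\OO,\chi}$.

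Second, I would identify $(\cW_h/J_h^\wedge)/h$ with the completion of the associated graded $\operatorname{gr}(\cW/J)$. Here one uses that $\cW_h$ is the completion of the Rees algebra $\cW_{h,fin}$, that the Kazhdan filtration on $\cW$ is non-negative with $\cW_{\le 0}=\CC$, and that $\cW_{h,fin}/h\cW_{h,fin}\simeq\operatorname{gr}\cW$, so that $\cW_h/h\cW_h$ is the completion of $\operatorname{gr}\cW$ at its irrelevant ideal. For $J\in\fJ\fd(\cW)$ one has $\cW_{h,fin}/J_h\simeq R_h(\cW/J)$, the Rees algebra of $\cW/J$ for the quotient filtration, whence $(\cW_{h,fin}/J_h)/h\simeq\operatorname{gr}(\cW/J)$; since $\cW_{h,fin}$ is Noetherian and $J_h$ finitely generated, the completion functor is exact here, so $\cW_h/J_h^\wedge$ is the completion of $R_h(\cW/J)$ and therefore $(\cW_h/J_h^\wedge)/h\simeq\operatorname{gr}(\cW/J)^\wedge$, the completion at the irrelevant ideal. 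Combining with the first step,
\[
(U_h^\wedge/I_h^\wedge)/h(U_h^\wedge/I_h^\wedge)\;\simeq\;\CC[[V]]\,\widehat\otimes_\CC\,\operatorname{gr}(\cW/J)^\wedge .
\]

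Finally, I would compare this with $\CC[\OO]_\chi^\wedge\simeq\CC[[V]]$. Writing $C:=\operatorname{gr}(\cW/J)^\wedge$, which (for $J\ne\cW$) is a complete Noetherian local $\CC$-algebra with residue field $\CC$, the completed tensor product $\CC[[V]]\widehat\otimes_\CC C$ is again complete Noetherian local with residue field $\CC$ and with cotangent space $(\mathfrak{m}_{\CC[[V]]}/\mathfrak{m}^2)\oplus(\mathfrak{m}_C/\mathfrak{m}_C^2)$; hence an isomorphism $\CC[[V]]\widehat\otimes_\CC C\simeq\CC[[V]]$ forces $\mathfrak{m}_C/\mathfrak{m}_C^2=0$, i.e.\ $C=\CC$ by Nakayama, and the converse is obvious. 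Since $\operatorname{gr}(\cW/J)$ is a finitely generated graded $\CC$-algebra with degree-zero part $\CC$, it embeds into its completion $C$, so $C=\CC$ is equivalent to $\operatorname{gr}(\cW/J)=\CC$, which is equivalent to $\dim_\CC\cW/J=1$, i.e.\ to $J$ being of codimension $1$; the case $J=\cW$ is trivial. (If one prefers the stronger reading in which the isomorphism of the statement is an equality of quotients of $U_h^\wedge/h=\CC[\fg^*]_\chi^\wedge$, the same computation together with the transversality of the Slodowy slice to $\overline\OO$ at $\chi$ identifies the ideal of $I_h^\wedge$ modulo $h$ with that of $\overline\OO$.) The genuinely delicate points I expect to spend time on are the exactness of completion used in the second step and the compatibility of the decomposition $U_h^\wedge\simeq\AA_h^\wedge(V)\widehat\otimes\cW_h$ with the Kazhdan gradings and with the ideals $I_h^\wedge$, $J_h^\wedge$; both are built into the constructions of \cite{Losev3}, so I would make them explicit but expect no essential difficulty beyond that.
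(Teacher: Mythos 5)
The paper itself gives no proof of \cref{U-W}: the statement is quoted from \cite{Losev1}, so there is no in-paper argument to compare yours against. Your reconstruction is correct and is, as far as I can tell, the standard way to deduce the proposition from Losev's decomposition $U_h^\wedge\simeq\AA_h^\wedge(V)\widehat{\otimes}_{\CC[[h]]}\cW_h$: reduce modulo $h$, identify $\AA_h^\wedge(V)/(h)$ with the completed local ring of $\OO$ at the smooth point $\chi$ and $(\cW_h/J_h^\wedge)/(h)$ with the completion of $\gr(\cW/J)$ at its irrelevant ideal, and then note that the completed tensor product is a formal power series ring in $\dim\OO$ variables exactly when the second factor is $\CC$, by comparing cotangent spaces (with $J=\cW$ handled separately). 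Two points should be made explicit rather than left as bookkeeping. First, the identification $(\cW_h/J_h^\wedge)/(h)\simeq\gr(\cW/J)^\wedge$ presupposes that $\cW_h$ is the completion of the Rees algebra $\cW_{h,fin}$ at the preimage of the irrelevant ideal of $\gr\cW\simeq\CC[S]$ ($S$ the Slodowy slice) and that $J_h^\wedge=J_h\otimes_{\cW_{h,fin}}\cW_h$; both are part of the content of \cite{Losev3} and are exactly what makes your second step legitimate. Second, the proposition is applied in the proof of \cref{quantizations of orbit} in the strong form, i.e.\ as an identification of quotients of $U_h^\wedge/(h)\simeq\CC[\fg^*]_\chi^\wedge$, so your parenthetical remark is the case that actually matters: for the forward implication one needs that when $J$ has codimension one the ideal $\gr J\subset\CC[S]$ cuts out the reduced point $\chi$, and that $S$ meets $\overline{\OO}$ transversally at $\chi$ (so $S\cap\overline{\OO}=\{\chi\}$ scheme-theoretically), which identifies the image of $I_h^\wedge$ modulo $h$ with the completed ideal of $\overline{\OO}$. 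With those two points spelled out your argument is complete; the cosmetic conflation of $V$ with $V^*$ is harmless since $V$ is symplectic.
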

		\subsection{Ideals in $W$-algebra vs quantization of orbits}\label{completely prime}
		Let us apply the techniques discussed above to study the set of quantizations of $\OO=G\chi$. Let $H=G_\chi$ be the stabilizer of $\chi$ and $\Gamma=H/H^\circ$ its component group. 
		
		Recall from \cref{ham} that for any graded $G$-equivariant quantization $\calD$ of $\OO$ we have a quantum jet bundle $\fD:=J^\infty \calD$ and the quantum comoment map of quantum jet bundles $\Phi_h: J^\infty U_h\to \fD$.
		\begin{lemma}\cite[Lemma 5.2]{Losev2}\label{Phi}
			The morphism $\Phi_h$ is surjective.
		\end{lemma}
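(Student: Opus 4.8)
The plan is to reduce the statement to its classical limit modulo $h$, and then further to the level of associated graded sheaves for the jet filtration, at which point surjectivity becomes the elementary fact that the orbit inclusion $\OO\hookrightarrow\fg^*$ is an immersion.

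First I would reduce modulo $h$. Surjectivity of a morphism of sheaves is local, so it may be tested over an open cover of $\OO$ by $\CC^\times$-stable affine opens $U$, which exist because $\OO$ is smooth, hence normal. Over such a $U$ the modules of sections $\Gamma(U,J^\infty U_h)$ and $\Gamma(U,\fD)$ are complete and separated in the $h$-adic topology; and since $U$ is affine and the sheaves involved are pro-coherent with surjective transition maps, passing to sections over $U$ preserves surjectivity and commutes with reduction modulo $h$ (the obstructions being first cohomology and $\varprojlim^1$ terms that vanish). A standard successive-approximation argument — lift an element of $\Gamma(U,\fD)$ modulo $h$, subtract its image, divide by $h$, iterate, and sum the $h$-adically convergent series — then shows that $\Phi_h$ is surjective as soon as $\bar\Phi_h:=\Phi_h\bmod h\colon J^\infty U_h/h\to J^\infty\cO_\OO$ is.

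Next I would identify $\bar\Phi_h$ and pass to associated graded. By construction $\Phi_h$ is the $\cO_\OO$-linear extension of the quantum comoment map $\fg\to\Gamma(\OO,\fD)^{\widetilde\nabla}$, which exists and is unique since $H^1_{\DR}(\OO)=0$; hence modulo $h$ it sends $f\otimes\xi$ to $f\cdot\Theta(\Phi_h(\xi)\bmod h)=f\cdot\Phi(\xi)=f\cdot\pi_2^*(\mu^*(\xi))$, where $\mu\colon\OO\hookrightarrow\fg^*$ is the inclusion. Writing $J^\infty\cO_\OO=\pi_{1*}(\cO_{\OO\times\OO}^\wedge)$ with the completion along the diagonal, and $J^\infty U_h/h$ as the pushforward along $\pi_1$ of the completion of $\cO_{\OO\times\fg^*}$ along $\widetilde{I}_{\mu,\Delta}$, i.e.\ along the graph $\Gamma_\mu$ of $\mu$, one sees that $\bar\Phi_h$ is precisely the completion along these subvarieties, pushed forward along $\pi_1$, of the restriction homomorphism $\cO_{\OO\times\fg^*}\to\cO_{\OO\times\OO}$ induced by $\operatorname{id}_\OO\times\mu$. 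Both $J^\infty\cO_\OO$ and $J^\infty U_h/h$ carry the jet filtration (the $I_\Delta$-adic, respectively $\widetilde{I}_{\mu,\Delta}$-adic, filtration), which is separated and complete, and $\bar\Phi_h$ is a filtered algebra homomorphism, so it suffices to prove that $\operatorname{gr}\bar\Phi_h$ is surjective. Since $\OO$ is smooth, the diagonal $\OO\hookrightarrow\OO\times\OO$ and the graph $\Gamma_\mu\cong\OO\hookrightarrow\OO\times\fg^*$ are regular immersions, so $\operatorname{gr} J^\infty\cO_\OO\cong\operatorname{Sym}_{\cO_\OO}\Omega^1_\OO$, generated in degree $1$, while $\operatorname{gr}(J^\infty U_h/h)\cong\operatorname{Sym}_{\cO_\OO}(\cO_\OO\otimes\fg)$, the conormal bundle of $\Gamma_\mu$ being the trivial bundle $\cO_\OO\otimes\fg$ (the pullback under $\mu$ of the cotangent bundle of the vector space $\fg^*$), so it too is generated in degree $1$. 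In degree $1$, $\operatorname{gr}^1\bar\Phi_h\colon\cO_\OO\otimes\fg\to\Omega^1_\OO$ is the natural map of conormal bundles attached to the chain of immersions $\OO\hookrightarrow\OO\times\OO\hookrightarrow\OO\times\fg^*$ (the first $\OO$ being the diagonal, identified with $\Gamma_\mu$ in $\OO\times\fg^*$), hence surjective by the conormal exact sequence; concretely, at a point $x$ it is the surjection $\fg=(\fg^*)^*\twoheadrightarrow T_x^*\OO$ dual to the injection $d\mu_x\colon T_x\OO\hookrightarrow\fg^*$. Since $\operatorname{gr}\bar\Phi_h$ is an algebra homomorphism whose target is generated in degree $1$ and which is the identity in degree $0$, surjectivity in degree $1$ forces surjectivity in all degrees; hence $\operatorname{gr}\bar\Phi_h$ is surjective, so $\bar\Phi_h$ is surjective, and therefore so is $\Phi_h$.

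I expect the main obstacle to lie not in the geometry — where everything comes down to $d\mu$ being injective because $\OO\hookrightarrow\fg^*$ is an immersion — but in the homological bookkeeping of the reduction modulo $h$: one must check that $\pi_{1*}$ is exact on the relevant pro-coherent jet sheaves over conical affine opens, and that the pertinent first cohomology and $\varprojlim^1$ groups vanish, so that surjectivity can indeed be tested modulo $h$ and locally. These verifications are routine but must be carried out with some care.
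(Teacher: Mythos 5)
Your argument is correct, and it is essentially the expected one: the paper itself gives no proof of this lemma, simply citing \cite[Lemma 5.2]{Losev2}, and the route you take is the same in spirit as the cited argument --- reduce modulo $h$ using $h$-adic completeness, then pass to the adic (jet) filtration, where surjectivity comes down to the fact that $\mu\colon\OO\hookrightarrow\fg^*$ is an immersion, so restriction of formal jets along the graph of $\mu$ to jets along the diagonal is surjective. The one place requiring real care is exactly the one you flag: commuting the reduction modulo $h$ (and the passage to sections or stalks) with the completion defining $J^\infty U_h$, which is a Mittag-Leffler/Artin--Rees point in a mildly noncommutative setting. Note that this can be largely bypassed: for surjectivity you do not need the precise identification of the associated graded of the source, only that $\bar\Phi_h$ is filtered and that the associated graded of $J^\infty\cO_\OO$, namely the symmetric algebra of $\Omega^1_\OO$ over $\cO_\OO$, is generated in degrees $0$ and $1$ by elements $f$ and $d(\mu^*\xi)$ that visibly lie in the image.
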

		We will need the classification of ideals of $J^\infty U_h$. We say that an ideal $I$ of a flat $\CC[[h]]$-algebra $A$ is $h$-saturated if $ha\in I$ implies $a\in I$. Equivalently, $I$ is $h$-saturated if $A/I$ is a flat $\CC[[h]]$-algebra. We have the following description of the set of homogeneous $G$-stable $h$-saturated ideals in $J^{\infty}U_h$. 
		\begin{lemma}\cite[Lemma 5.1]{Losev2}\label{fiber}
			(1) The fiber of $J^\infty U_h$ at $\chi$ is naturally identified with $U_h^\wedge$.
			
			(2) Let $\fJ\fd(J^\infty U_h)$ be the set of homogeneous $G$-stable $h$-saturated ideals in $J^{\infty}U_h$. Taking the fiber of an ideal at $\chi$ defines a bijection between $\fJ\fd(J^\infty U_h)$ and $\fJ\fd(U_h^\wedge)^\Gamma$. The inverse map is given by $I^\wedge_h\to \pi_*(\cO_G\widehat{\otimes}	I^\wedge_h)^H$, where $\pi$ stands for the projection $G\to G/H=X$.
			
			(3) Any element of $\fJ\fd(J^\infty U_h)$ is stable with respect to the connection $\widetilde{\nabla}_U$.
		\end{lemma}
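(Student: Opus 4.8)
I would prove the three parts in order; throughout, $X=\OO$. For part (1), start from $J^\infty U_h=\varprojlim_k(\cO_X\otimes U_h(\fg))/\widetilde I_{\mu,\Delta}^k$, where $\widetilde I_{\mu,\Delta}$ is the preimage of the diagonal ideal $I_\Delta\subset\cO_X\otimes\CC[X]$ under $\cO_X\otimes U_h(\fg)\to\cO_X\otimes S(\fg)\to\cO_X\otimes\CC[X]$, the second arrow being $\mathrm{id}\otimes\mu^*$; in particular $h\in\widetilde I_{\mu,\Delta}$. Taking the fiber at $\chi$, i.e. applying $(-)\otimes_{\cO_X}\CC$ through the first tensor factor, turns $\cO_X\otimes U_h(\fg)$ into the quotient ring $U_h(\fg)$, hence carries $\widetilde I_{\mu,\Delta}^k$ onto the $k$-th power of the image of $\widetilde I_{\mu,\Delta}$; that image is the preimage in $U_h(\fg)$ of the maximal ideal of $\chi$ in $\CC[X]$, and since $\mu^*:S(\fg)=\CC[\fg^*]\to\CC[X]$ sends the maximal ideal $m_\chi$ of $\chi$ onto the maximal ideal of $\chi$, this preimage is exactly $\widetilde m_\chi$ (notation of \cref{W-alg}). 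So the fiber of the $k$-th term is $U_h(\fg)/\widetilde m_\chi^k$; the transition maps being surjective, the right-exact fiber functor commutes with $\varprojlim_k$, giving $(J^\infty U_h)_\chi=\varprojlim_k U_h(\fg)/\widetilde m_\chi^k=U_h^\wedge$, with the residual $H=G_\chi$-action agreeing with that of \cref{W-alg} since both restrict the $H$-action on $\cO_X\otimes U_h(\fg)$.

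For part (2), I would use $G$-equivariant descent along the $H$-torsor $\pi:G\to G/H=X$: a $G$-equivariant pro-coherent sheaf on $X$ is recovered from its fiber at $\chi$ (a module with residual $H$-action) via $M\mapsto\pi_*(\cO_G\widehat{\otimes}M)^H$, and these assignments are mutually inverse and monoidal. Hence, by part (1), they identify $G$-stable two-sided ideals of $J^\infty U_h$ with $H$-stable two-sided ideals of $U_h^\wedge$; being compatible with the grading $\CC^\times$-action fixing $\chi$ and with the $\CC[[h]]$-structure, they match homogeneous $h$-saturated ideals on the two sides. It remains to see that an $H$-stable ideal of $U_h^\wedge$ is automatically $\Gamma=H/H^\circ$-stable, i.e. that $H^\circ$ preserves every two-sided ideal: the $H$-action on $U_h^\wedge$ of \cref{W-alg} is induced by the adjoint action of $H=G_\chi\subset G$ on $U_h(\fg)$, so differentiating, $\xi\in\fg^e$ (the Lie algebra of $H$) acts by $\frac{1}{h^2}[\xi,-]$, an inner derivation, which preserves every closed two-sided ideal; as $H^\circ$ is connected this forces $H^\circ$-invariance. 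Thus the correspondence is as claimed, with inverse $I^\wedge_h\mapsto\pi_*(\cO_G\widehat{\otimes}I^\wedge_h)^H$.

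For part (3), let $\mathcal I\in\fJ\fd(J^\infty U_h)$. By \cref{ham}, $J^\infty U_h$ is a Hamiltonian quantum jet bundle whose comoment is the tautological map $\fg\hookrightarrow J^\infty U_h$, so the $G$-action satisfies $\xi_{J^\infty U_h}=\widetilde\nabla_{U,\xi_X}+\frac{1}{h^2}[\xi,-]$ for $\xi\in\fg$. Since $\mathcal I$ is $G$-stable, $\xi_{J^\infty U_h}(\mathcal I)\subset\mathcal I$; since $\mathcal I$ is a two-sided ideal, $[\xi,\mathcal I]\subset\mathcal I$; hence $\widetilde\nabla_{U,\xi_X}(\mathcal I)\subset\mathcal I$ for every $\xi\in\fg$. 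Because $X=\OO$ is a single $G$-orbit, the vector fields $\xi_X$ ($\xi\in\fg$) generate $\Vect(X)$ over $\cO_X$, and $\widetilde\nabla_U$ is $\cO_X$-linear in its vector-field slot, so $\widetilde\nabla_{U,v}(\mathcal I)\subset\mathcal I$ for every $v\in\Vect(X)$; by continuity of $\widetilde\nabla_U$ and closedness of $\mathcal I$, this means $\mathcal I$ is $\widetilde\nabla_U$-stable. The step I expect to be most delicate is the descent of part (2) at the pro-coherent, $h$-complete level — in particular checking that $\pi_*(\cO_G\widehat{\otimes}I^\wedge_h)^H$ is again $\CC[[h]]$-flat, equivalently $h$-saturated, which needs some care with completions and with exactness of $(-)^H$; parts (1) and (3) are routine once one may commute a fiber past an inverse limit and once the structural formula for $\widetilde\nabla_U$ is in hand.
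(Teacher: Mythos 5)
The paper does not prove this lemma at all: it is quoted from \cite[Lemma 5.1]{Losev2}, so there is no internal argument to compare with. Your reconstruction follows what is essentially the standard (and Losev's) route: compute the fiber of $J^\infty U_h$ at $\chi$, use the associated-bundle equivalence along $G\to G/H$ to match $G$-stable ideal subsheaves with $H$-stable ideals of the fiber, observe that $H^\circ$-stability is automatic so only $\Gamma$-stability survives, and deduce $\widetilde\nabla_U$-stability from the identity $\xi_{J^\infty U_h}=\widetilde\nabla_{U,\xi_X}+\frac{1}{h^2}[1\otimes\xi,\bullet]$ together with transitivity of the $G$-action on $\OO$. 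In outline this is correct.

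One step, used twice, is stated in a form that is actually false and needs the hypotheses to rescue it: you claim that $\frac{1}{h^2}[\xi,\bullet]$ "is an inner derivation, which preserves every closed two-sided ideal.'' The rescaled bracket is not inner, and the claim fails in general: already in $U_h(\mathfrak{sl}_2)$ the (closed) two-sided ideal generated by $h$ and the root vector $e$ is not stable under $\frac{1}{h^2}[f,\bullet]$, since $\frac{1}{h^2}[f,e]$ is the Cartan element. What makes your argument work is precisely $h$-saturation: for $a$ in an $h$-saturated two-sided ideal $I$ one has $[\xi,a]\in I\cap h^2U_h^\wedge=h^2I$, hence $\frac{1}{h^2}[\xi,a]\in I$. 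So in (2) you must say that the ideals of $U_h^\wedge$ entering the bijection are $h$-saturated (they are, being fibers of $h$-saturated ideals, respectively of the form $\AA_h^\wedge(V)\widehat{\otimes}\cJ^\wedge_h$), and in (3) you must invoke $h$-saturation of $\mathcal{I}$ when subtracting $\frac{1}{h^2}[\Phi_h(\xi),\bullet]$ from $\xi_{J^\infty U_h}$. Two smaller points: passing from invariance under the Lie algebra of $H$ to invariance under $H^\circ$ needs the remark that the $H$-action on $U_h^\wedge$ is pro-rational (it preserves the $\widetilde{m}_\chi$-adic filtration and is algebraic on the finite-dimensional quotients), since connectedness alone does not integrate a Lie-algebra invariance in a topological module; and in (1) the identification of the image of $\widetilde{I}_{\mu,\Delta}$ in the fiber with $\widetilde{m}_\chi$ deserves the one-line verification that for $u\in\widetilde{m}_\chi$ the element $1\otimes u-\bar{u}\otimes 1$ (with $\bar{u}$ the image of $u$ in $\cO_X$) lies in $\widetilde{I}_{\mu,\Delta}$ and evaluates to $u$ at $\chi$. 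With these fixes, and with the flatness/completion care you yourself flag for the functor $\pi_*(\cO_G\widehat{\otimes}\bullet)^H$, the proof is sound.
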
	
		Following \cite{Losev2}, we can prove the following theorem.
		\begin{theorem}\label{quantizations of orbit}\cite[Theorem 1.1]{Losev2}
			There is a natural bijection between the set of $G$-equivariant quantizations of $\OO$ and the set $\fJ\fd^{1}(\cW)^\Gamma$.
		\end{theorem}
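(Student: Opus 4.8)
The plan is to produce the bijection as a composition of bijections, each one supplied by a lemma stated above, and then to restrict it to the asserted subsets. Start from a graded $G$-equivariant quantization $\calD$ of $\OO$. Since $\OO=G/H$ is a homogeneous space, $H^1_{\DR}(\OO)=0$, so by \cref{quantum comoment} $\calD$ is canonically Hamiltonian; passing to jet bundles produces the $G$-equivariant Hamiltonian quantum jet bundle $\fD=J^\infty\calD$ together with the quantum comoment morphism $\Phi_h\colon J^\infty U_h\to\fD$ of \cref{ham}, which is surjective by \cref{Phi}. Put $\mathcal{K}=\ker\Phi_h$. Because $\calD$ is graded and $G$-equivariant and $\fD$ is flat over $\CC[[h]]$, the ideal $\mathcal{K}$ is homogeneous, $G$-stable and $h$-saturated, hence automatically $\widetilde\nabla_U$-stable by \cref{fiber}(3); thus $\mathcal{K}\in\fJ\fd(J^\infty U_h)$. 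Conversely, $\fD$ is recovered from $\mathcal{K}$ as $J^\infty U_h/\mathcal{K}$, and $\calD$ from $\fD$ as $\fD^{\widetilde\nabla}$ (\cref{jet quantization}), so $\calD\mapsto\mathcal{K}$ is injective and it remains only to describe its image.

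I would then transport $\mathcal{K}$ to the $W$-algebra. By \cref{fiber}(1)--(2), the fiber at $\chi$ identifies $\fJ\fd(J^\infty U_h)$ with $\fJ\fd(U_h^\wedge)^\Gamma$; write $I_h^\wedge\subset U_h^\wedge$ for the fiber of $\mathcal{K}$. Under the isomorphism $U_h^\wedge\simeq\AA_h^\wedge(V)\widehat\otimes_{\CC[[h]]}\cW_h$ of \cref{W-alg}, together with the standard description of ideals of a completed tensor product with a Weyl-algebra factor (every closed two-sided ideal is $\AA_h^\wedge(V)\widehat\otimes_{\CC[[h]]}J_h^\wedge$ for a closed two-sided $J_h^\wedge\subset\cW_h$), the ideal $I_h^\wedge$ corresponds to a homogeneous, $\Gamma$-stable, $h$-saturated ideal $J_h^\wedge$ of $\cW_h$; and by the Rees construction recalled in \cref{W-alg} these are in bijection with the $\Gamma$-stable ideals $J\in\fJ\fd(\cW)$. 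This yields a bijection between $\fJ\fd(J^\infty U_h)$ and $\fJ\fd(\cW)^\Gamma$; the theorem asserts that under it the quantizations of $\OO$ correspond exactly to the codimension-$1$ ideals.

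The last step is precisely this matching, and it is where \cref{U-W} comes in. With the above notation, $U_h^\wedge/I_h^\wedge$ is the fiber of $\fD=J^\infty\calD$ at $\chi$, so $(U_h^\wedge/I_h^\wedge)/h(U_h^\wedge/I_h^\wedge)$ is the fiber at $\chi$ of $\fD/h\fD=J^\infty\cO_\OO$, which equals the completion $\CC[\OO]_\chi^\wedge$ of the local ring of $\OO$ at $\chi$. Hence \cref{U-W} says exactly that the associated ideal $J$ has codimension $1$, so $\calD\mapsto J$ lands in $\fJ\fd^1(\cW)^\Gamma$. Conversely, starting from $J\in\fJ\fd^1(\cW)^\Gamma$ the chain above yields $\mathcal{K}\in\fJ\fd(J^\infty U_h)$ and hence a pro-coherent sheaf $\fD=J^\infty U_h/\mathcal{K}$ of $\cO_\OO[[h]]$-algebras with $[\fD,\fD]\subset h^2\fD$, carrying the flat connection descended from $\widetilde\nabla_U$ (well defined since $\mathcal{K}$ is $\widetilde\nabla_U$-stable); by \cref{U-W} the fiber of $\fD/h\fD$ at $\chi$ is $\CC[\OO]_\chi^\wedge$, which is also the fiber of $J^\infty\cO_\OO$, and since a $G$-equivariant jet bundle on $\OO$ is recovered from its fiber at $\chi$ together with its flat connection (cf. the mechanism behind \cref{fiber}), this identification of fibers lifts to an isomorphism $\Theta\colon\fD/h\fD\xrightarrow{\ \sim\ }J^\infty\cO_\OO$ of sheaves of Poisson algebras intertwining the connections. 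Thus $\fD$ is a $G$-equivariant quantum jet bundle and $\calD:=\fD^{\widetilde\nabla}$ is the desired $G$-equivariant quantization of $\OO$ by \cref{jet quantization}. The two constructions are mutually inverse because every intermediate step is a bijection, and the resulting bijection is natural because every map used is canonical.

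The genuinely substantive ingredients are imported from \cite{Losev2}: the surjectivity of $\Phi_h$ (\cref{Phi}) and the classification of homogeneous $G$-stable $h$-saturated ideals of $J^\infty U_h$ by $\Gamma$-stable ideals of $U_h^\wedge$ (\cref{fiber}). Granting these, the rest is largely bookkeeping --- tracking the homogeneous/$G$-stable/$h$-saturated/$\widetilde\nabla_U$-stable conditions along the chain and passing between $\cW_h$ and $\cW$ via the Rees construction. The one place that is not pure formalism, and where I expect the main work, is the reverse half of the last step: verifying that the sheaf $\fD$ rebuilt from a codimension-$1$ ideal is genuinely a quantum jet bundle, i.e. promoting the fiberwise identification of $\fD/h\fD$ with $J^\infty\cO_\OO$ coming from \cref{U-W} to an honest isomorphism $\Theta$ of jet bundles.
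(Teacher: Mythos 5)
Your proposal is correct and follows essentially the same route as the paper's proof: kernel of the surjective quantum comoment morphism $\Phi_h$, the fiber-at-$\chi$ correspondence of \cref{fiber}, the tensor decomposition $U_h^\wedge\simeq\AA_h^\wedge(V)\widehat{\otimes}\cW_h$ with the Rees construction, and \cref{U-W} to match codimension-$1$ ideals with quantizations, finished off by \cref{jet quantization}. The only cosmetic difference is that you invoke the classification of two-sided ideals of $\AA_h^\wedge(V)\widehat{\otimes}\cW_h$ explicitly, where the paper uses the two mutually inverse assignments $I_h^\wedge\mapsto I_h^\wedge\cap\cW_h$ and $\cJ_h^\wedge\mapsto\AA_h^\wedge(V)\widehat{\otimes}\cJ_h^\wedge$, which rests on the same fact.
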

		\begin{proof}
			Let $\calD$ be a $G$-equivariant quantization of $\OO$, $\fD$ be the corresponding quantum jet bundle, and $\fJ$ be the kernel of the map $\Phi_h$. Since $\fD$ is $G$-equivariant, flat, and with a $\CC^\times$-action, $\fJ$ is a homogeneous $G$-stable $h$-saturated ideal of $J^\infty U_h$. Let $I^\wedge_h\in \fJ\fd(U_h^\wedge)^\Gamma$ be the fiber of $\fJ$ at $\chi$.
			
			Note that $\Theta$ induces an isomorphism $\Theta_\chi: \fD_\chi/h\fD_\chi\simeq J^\infty\cO_{X,\chi}\simeq \CC[X]_\chi^\wedge$ on stalks of the point $\chi$. But $\fD_\chi\simeq U_h^\wedge/I^\wedge_h$, so we have $(U_h^\wedge/I^\wedge_h)/h(U_h^\wedge/I^\wedge_h)\simeq \CC[X]_\chi^\wedge$. The ideal $\cJ_\fD=(I^\wedge_{h}\cap \cW_h)_{fin}/(h-1)$ lies in $\fJ\fd^1(\cW)$ by \cref{U-W} and is $\Gamma$-invariant by construction.
			
			In the opposite direction, for an ideal $\cJ\in \fJ\fd^1(\cW)$ let $\cJ^\wedge_h$ be the corresponding ideal in the Rees completion $\cW_h$. We denote the ideal $\AA^\wedge_{h}(V)\widehat{\otimes} \cJ^\wedge_h\subset U^\wedge_{h}$ by $I^\wedge_{h}$. Set $\fJ=\pi_*(\cO_G\widehat{\otimes} I^\wedge_{h})^H$ and $\fD_{\cJ}=J^\infty U_h/\fJ$. Since $\fJ$ is stable with respect to the connection $\widetilde{\nabla}_U$, there is an induced connection $\widetilde{\nabla}$ on $\fD_\cJ$. The map $\theta: \fD_{\cJ,\chi}/h\fD_{\cJ,\chi}\simeq (U_h^\wedge/I^\wedge_{h})/h(U_h^\wedge/I^\wedge_{h})\simeq \CC[X]_\chi^\wedge$ gives rise to the isomorphism $\Theta: \fD_\cJ/h\fD_\cJ\to J^\infty \cO_X$. 
			
			One can easily check that the maps $\fD\to \cJ_\fD$ and $\cJ\to \fD_\cJ$ are mutually inverse. Combining this result with \cref{jet quantization} we get the theorem.
		\end{proof}
	
	Let $\calD$ be a $G$-equivariant quantization of $\OO$ and $I^\wedge_{h}$, $I_h$ and $I$ be the corresponding ideals of $U_h^\wedge$, $U_h$ and $U(\fg)$ respectively.
		The algebra $U_h/I_h$ is a formal quantization of $\gr [U(\fg)/I]$. The affine Poisson scheme $Y= \Spec(\gr [U(\fg)/I])$ is generically reduced, and its reduced scheme is $\overline{\OO}$. We can microlocalize $U_h/I_h$ to a formal quantization of $Y$. Restricting to the open subset $\OO\subset Y$ we get a quantization $\calD'$ of $\OO$. Note that the stalks of $J^\infty\calD$ and $J^\infty\calD'$ at the point $\chi\in \OO$ are both equal to $(U_h^\wedge/I^\wedge_{h})$. Therefore we have proved the following lemma.
		\begin{lemma}\label{ext}
			The quantizations $\calD'$ and $\calD$ coincide. Therefore $\calD$ extends to a quantization of $Y$.
		\end{lemma}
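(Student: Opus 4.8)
The statement of \cref{ext} asserts that a certain quantization $\calD'$ of $\OO$, obtained by microlocalizing the algebraic quantization $U_h/I_h$ of $Y=\Spec(\gr[U(\fg)/I])$ to $Y$ and then restricting to the open subset $\OO$, agrees with the original $G$-equivariant quantization $\calD$. By \cref{jet quantization}, quantizations of $\OO$ are determined (up to canonical isomorphism) by their quantum jet bundles, and a morphism of quantum jet bundles is automatically an isomorphism; moreover a morphism of quantizations is automatically an isomorphism. So the plan is to produce a morphism $\calD\to\calD'$ (or between the associated jet bundles), and the cleanest way to detect such a morphism is via a single stalk at the point $\chi\in\OO$, since $\OO$ is a single $G$-orbit and the jet bundle on a homogeneous space is reconstructed from its fiber at one point by the recipe $I^\wedge_h\mapsto \pi_*(\cO_G\widehat\otimes I^\wedge_h)^H$ of \cref{fiber}(2).

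First I would record that the construction of $\calD'$ passes through the same ideal $I_h\subset U_h$ that defines $\calD$: indeed $\calD$ comes from $\cJ\in\fJ\fd^1(\cW)^\Gamma$, whose Rees–completed ideal $\cJ^\wedge_h$ yields $I^\wedge_h=\AA^\wedge_h(V)\widehat\otimes\cJ^\wedge_h$, and $I_h$, $I$ are the induced ideals in $U_h$ and $U(\fg)$. Thus $U_h/I_h$ is by definition a formal quantization of $\gr[U(\fg)/I]=\CC[Y]$, and by \cref{affinequant} it corresponds to a formal quantization of the affine Poisson scheme $Y$. Since $Y$ is generically reduced with reduced scheme $\overline{\OO}$, and $\OO\subset Y$ is a $\CC^\times$-stable open subset on which $Y$ is smooth, microlocalization to $Y$ followed by restriction to $\OO$ produces a genuine formal quantization $\calD'$ of the symplectic variety $\OO$ in the conical topology. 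The key compatibility is that this microlocalization, being a sheafification in the conical topology, does not change stalks at closed points of $\OO$: the stalk of $\calD'$ at $\chi$ is the completion of $(U_h/I_h)$ at the maximal ideal $m_\chi$, which by the very construction of $U_h^\wedge$ in \cref{W-alg} and \cref{U-W} is $(U_h^\wedge/I^\wedge_h)$.

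Next I would compute the stalk of $J^\infty\calD$ at $\chi$. By the proof of \cref{quantizations of orbit}, $\fD=J^\infty\calD$ is $J^\infty U_h/\fJ$ where $\fJ=\pi_*(\cO_G\widehat\otimes I^\wedge_h)^H$, and by \cref{fiber}(1) the fiber of $J^\infty U_h$ at $\chi$ is $U_h^\wedge$, so the stalk $\fD_\chi$ is $U_h^\wedge/I^\wedge_h$ — precisely the same algebra as the stalk of $J^\infty\calD'$ at $\chi$ (the jet bundle of $\calD'$ has the same fiber at $\chi$ because $\calD'_\chi$ is the $m_\chi$-adic completion, which agrees with the jet-bundle fiber for a smooth point of a symplectic variety, as recorded in \cref{quantum jet bundles}). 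Both $J^\infty\calD$ and $J^\infty\calD'$ are $G$-equivariant quantum jet bundles on the homogeneous space $\OO=G/H$, and by \cref{fiber}(2)–(3) such objects are determined by their $h$-saturated fiber ideal in $U_h^\wedge$ together with the (unique, flat, $G$-invariant) connection; since both fiber ideals equal $I^\wedge_h$, the inverse construction $\pi_*(\cO_G\widehat\otimes I^\wedge_h)^H$ yields the same jet bundle, hence $J^\infty\calD\cong J^\infty\calD'$ as $G$-equivariant quantum jet bundles. Applying the equivalence of \cref{jet quantization} gives $\calD\cong\calD'$.

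The main obstacle, and the step I would spell out most carefully, is the verification that microlocalizing the \emph{non-reduced} algebraic quantization $U_h/I_h$ of $Y$ and restricting to $\OO$ produces an honest formal quantization of the symplectic orbit $\OO$ — in particular that $\calD'/h\calD'\simeq\cO_\OO$ (not just $\cO_Y|_\OO$ with nilpotents) and that $\calD'$ is flat and $h$-complete. This is where one uses that $Y$ is generically reduced so that its nilradical is supported on a proper closed $\CC^\times$-stable subset disjoint from $\OO$, whence $\cO_Y|_\OO=\cO_\OO$, and that $\OO$ is smooth symplectic so microlocalization behaves well; the rest is the standard bookkeeping that completion at $\chi$ commutes with these operations, which is exactly what makes the two stalks at $\chi$ literally equal to $U_h^\wedge/I^\wedge_h$.
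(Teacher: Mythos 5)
Your proposal is correct and follows essentially the same route as the paper: both arguments identify the fibers of $J^\infty\calD$ and $J^\infty\calD'$ at $\chi$ with $U_h^\wedge/I_h^\wedge$ and then invoke \cref{fiber} together with \cref{jet quantization} to conclude that the two $G$-equivariant quantizations of $\OO$ coincide. You merely spell out details the paper leaves implicit (that $\calD'$ is an honest quantization of $\OO$ because $Y$ is generically reduced with $\OO$ in its smooth locus, and that the completed stalk rather than the stalk itself gives the jet-bundle fiber), which is a fair elaboration rather than a different proof.
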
	
		Let us explain the structure of the proof of \cref{theorem}. The affine Poisson scheme $Y$ may not be normal or even reduced. Therefore we want to lift quantizations to a normal scheme $\Spec(\CC[\OO])$. By \cref{codim 2} it is enough to lift quantizations to symplectic leaves of codimension less then $4$. The smooth parts of both varieties coincide, so we need to look locally at points on codimension $2$ symplectic leaves. To lift a quantization it is more convenient to work in the complex-analytic topology. In \cref{analytification} we microlocalize a quantization of $Y$ to a holomorphic quantization of $Y$. In \cref{lift} we construct a holomorphic quantization of $\Spec(\CC[\OO])$ corresponding to one of $Y$ and consider the corresponding algebraic quantization. In \cref{proof} we show that the main theorem follows from the constructions discussed above.
		
	\section{Analytification of quantization}\label{analytification of quantization}
	\subsection{GAGA}\label{GAGA}
		Let $I$ be a two-sided ideal of $U(\fg)$. Then $U(\fg)/I$ is a filtered quantization of $\gr [U(\fg)/I]$. Consider the corresponding formal quantization of $\gr [U(\fg)/I]$ and microlocalize it to a formal quantization $\calD_I$ of $\Spec(\gr [U(\fg)/I])$. The resulting sheaf $\calD_I$ is a quotient of the quantization $\calD$ of $\fg^*$ from \cref{differential operators}. We want to establish a connection between modules over $\calD$ and $\calD_{hol}$ similar to the connection between coherent sheaves and coherent analytic sheaves. First, we need to recall this connection from \cite{GAGA}.   
		
		Let $X$ be an algebraic scheme over $\CC$. The set of closed points of $X$ can be endowed with the structure of an analytic space $X_{hol}$. The natural embedding $\imath: X_{hol}\to X$ is continuous, where we consider Zariski topology on $X$ and analytic topology on $X_{hol}$. 
		For a closed point $x\in X$ we have a local ring $\cO_x$ with maximal ideal $m_x$. We define the completion $\widehat{\cO}_x$ as a limit $\varprojlim_{k\to \infty} \cO_x/m_x^k$. Analogously, for a point $x\in X_{hol}$ we consider the analogous completion $\widehat{\calH}_x=\varprojlim_{k\to \infty} \calH_x/m_{x,hol}^k$. Since every regular function in the neighborhood of $x$ gives a germ of holomorphic function at $x$ we have the embedding $\theta_x: \cO_x\to \calH_x$ on stalks that extends to a map $\widehat{\theta}_x: \widehat{\cO}_x\to \widehat{\calH}_x$.
		\begin{prop}\cite[{Proposition 3}]{GAGA} \label{isost}
			The homomorphism $\widehat{\theta}_x: \widehat{\cO}_x\to \widehat{\calH}_x$ is an isomorphism.
		\end{prop}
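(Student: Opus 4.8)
The plan is to reduce the statement to an explicit local computation in power series rings. Since the assertion concerns stalks, it is local on $X$: I would replace $X$ by an affine open neighbourhood of $x$ and present it as $\Spec(B)$ with $B=\CC[x_1,\dots,x_n]/I$ for some ideal $I$. By the Nullstellensatz the closed points of a finitely generated $\CC$-algebra are exactly its $\CC$-points, so after a translation we may assume $x$ corresponds to the origin $0\in\AA^n$. Then $\cO_x$ is the localization of $B$ at the maximal ideal $(x_1,\dots,x_n)B$, and the analytification $X_{hol}$ is, near $x$, the analytic subspace of a neighbourhood of $0$ in $\CC^n$ defined by the same ideal $I$, so that $\calH_x=\CC\{x_1,\dots,x_n\}/I\,\CC\{x_1,\dots,x_n\}$, where $\CC\{x_1,\dots,x_n\}$ is the ring of convergent power series (the stalk at $0$ of holomorphic functions on $\CC^n$). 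Under these identifications $\theta_x$, and hence $\widehat\theta_x$, is induced by the inclusion $\CC[x_1,\dots,x_n]\hookrightarrow\CC\{x_1,\dots,x_n\}$.

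Next, since $\widehat\theta_x$ is by construction the inverse limit of the maps $\cO_x/m_x^k\to\calH_x/m_{x,hol}^k$, it suffices to show that each of these maps is an isomorphism; an inverse limit of a compatible family of isomorphisms is an isomorphism. (Equivalently one could induct on $k$ with the five lemma, but the direct route seems cleaner.) Unwinding the localizations, $\cO_x/m_x^k$ is the Artinian local ring $\CC[x_1,\dots,x_n]/(I+(x_1,\dots,x_n)^k)$, and likewise $\calH_x/m_{x,hol}^k=\CC\{x_1,\dots,x_n\}/(I\,\CC\{x_1,\dots,x_n\}+(x_1,\dots,x_n)^k)$, the map between them still being induced by $\CC[x_1,\dots,x_n]\hookrightarrow\CC\{x_1,\dots,x_n\}$.

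The core of the argument is then the elementary observation that $\CC[x_1,\dots,x_n]$ is dense in $\CC\{x_1,\dots,x_n\}$ for the $(x_1,\dots,x_n)$-adic topology: every convergent power series agrees with a polynomial modulo $(x_1,\dots,x_n)^k$. Surjectivity of the finite-level map is immediate. For injectivity, suppose a polynomial $p$ lies in $I\,\CC\{x_1,\dots,x_n\}+(x_1,\dots,x_n)^k$ and write $p=\sum_j a_j g_j+r$ with $g_j\in I$ polynomials, $a_j\in\CC\{x_1,\dots,x_n\}$ and $r\in(x_1,\dots,x_n)^k$. Replacing each $a_j$ by its truncation $\tilde a_j$ in degrees $<k$ (a polynomial, with $a_j-\tilde a_j\in(x_1,\dots,x_n)^k\CC\{x_1,\dots,x_n\}$), one gets $p\equiv\sum_j\tilde a_j g_j\pmod{(x_1,\dots,x_n)^k}$ with $\sum_j\tilde a_j g_j\in I$; since both $p$ and $\sum_j\tilde a_j g_j$ are polynomials, their difference lies in $(x_1,\dots,x_n)^k\CC[x_1,\dots,x_n]$, hence $p\in I+(x_1,\dots,x_n)^k$ inside $\CC[x_1,\dots,x_n]$. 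Thus each finite-level map is an isomorphism, and passing to the inverse limit shows $\widehat\theta_x$ is an isomorphism. One sees in passing that both completions are identified with $\CC[[x_1,\dots,x_n]]/I\,\CC[[x_1,\dots,x_n]]$, the $(x_1,\dots,x_n)$-adic closure of $I$ in the formal power series ring being $I\,\CC[[x_1,\dots,x_n]]$ by the Artin--Rees lemma.

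I do not anticipate a real obstacle. The only input beyond routine commutative algebra is the standard description of the structure sheaf of the analytification of a closed subscheme of $\AA^n$ as $\CC\{x_1,\dots,x_n\}/I\,\CC\{x_1,\dots,x_n\}$; granting that, everything rests on the density of polynomials in convergent power series for the $(x_1,\dots,x_n)$-adic topology. A heavier classical alternative (the one in \cite{GAGA}) would use the faithful flatness of $\CC\{x_1,\dots,x_n\}$ over the local ring $\CC[x_1,\dots,x_n]_{(x_1,\dots,x_n)}$, a consequence of the Weierstrass preparation theorem, together with the compatibility of formal completion with these base changes; this is not needed here.
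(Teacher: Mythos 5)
Your argument is correct. Note that the paper itself gives no proof here: it simply cites \cite[Proposition 3]{GAGA}, so the comparison is with Serre's argument. Serre proceeds by the same local reduction (embed a neighbourhood of $x$ as a closed subscheme of $\CC^n$ cut out by $I$), but then concludes in one stroke: the algebraic and analytic local rings of $\CC^n$ at $x$ have the same completion $\CC[[x_1,\dots,x_n]]$, and since completion is exact on finitely generated modules over a Noetherian local ring, both $\widehat{\cO}_x$ and $\widehat{\calH}_x$ are identified with $\CC[[x_1,\dots,x_n]]/I\,\CC[[x_1,\dots,x_n]]$. You instead prove the isomorphism level by level, $\cO_x/m_x^k \to \calH_x/m_{x,hol}^k$, via truncation of convergent power series, and pass to the inverse limit; this buys a more elementary, self-contained proof that avoids invoking exactness of completion and the identification of $\widehat{\CC\{x_1,\dots,x_n\}}$, at the cost of redoing those facts by hand. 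Two small points you use implicitly and should make explicit: first, that in $\CC\{x_1,\dots,x_n\}$ the ideal $(x_1,\dots,x_n)^k$ consists exactly of the convergent series of order at least $k$ (the inclusion $\{\mathrm{ord}\ge k\}\subset (x_1,\dots,x_n)^k$ is what makes your truncation argument for surjectivity work, and the reverse inclusion is what lets you conclude that a polynomial lying in $(x_1,\dots,x_n)^k\CC\{x_1,\dots,x_n\}$ already lies in $(x_1,\dots,x_n)^k\CC[x_1,\dots,x_n]$); second, that $\cO_x/m_x^k\simeq \CC[x_1,\dots,x_n]/(I+(x_1,\dots,x_n)^k)$ because this quotient is already local, so the localization is harmless. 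Both are routine, and with them your proof is complete and equivalent in substance to the cited one.
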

		\begin{cor}\label{faithfully flat on stalks}
			The algebra $\calH_x$ is faithfully flat over $\cO_x$.
		\end{cor}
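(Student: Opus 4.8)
The plan is to deduce faithful flatness from Proposition~\ref{isost} by descending flatness along the completion maps. Two classical facts will be used: for a Noetherian local ring $(R,\mathfrak m)$ the $\mathfrak m$-adic completion $\widehat R$ is faithfully flat over $R$; and a flat local homomorphism of local rings is automatically faithfully flat. Here $\cO_x$ is Noetherian local, being a localization of a finitely generated $\CC$-algebra, so $\cO_x\to\widehat\cO_x$ is faithfully flat. The stalk $\calH_x$ is likewise Noetherian local: it is a quotient of a ring of convergent power series in finitely many variables, which is Noetherian by the Weierstrass preparation theorem; hence $\calH_x\to\widehat\calH_x$ (the $m_{x,hol}$-adic completion) is faithfully flat as well. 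Finally, the homomorphism $\theta_x\colon\cO_x\to\calH_x$ is local, since a regular function vanishing at $x$ has a germ vanishing at $x$, so $\theta_x(m_x)\subset m_{x,hol}$.

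Now, by functoriality of completion the square formed by $\theta_x$, $\widehat\theta_x$ and the two completion maps commutes, so the composite $\cO_x\to\calH_x\to\widehat\calH_x$ coincides with $\cO_x\to\widehat\cO_x\xrightarrow{\widehat\theta_x}\widehat\calH_x$. By Proposition~\ref{isost} the map $\widehat\theta_x$ is an isomorphism, so this composite is faithfully flat. It then remains to invoke the elementary observation that if $A\to B\to C$ are ring homomorphisms with $A\to C$ flat and $B\to C$ faithfully flat, then $A\to B$ is flat: given an injection $M\hookrightarrow N$ of $A$-modules, the map $M\otimes_A B\to N\otimes_A B$ becomes injective after applying the faithfully flat functor $-\otimes_B C$, since it is then identified with the injective map $M\otimes_A C\to N\otimes_A C$. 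Applying this with $A=\cO_x$, $B=\calH_x$, $C=\widehat\calH_x$ yields flatness of $\cO_x\to\calH_x$, and since $\theta_x$ is local this flatness is automatically faithful.

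There is no real obstacle in this argument; the only points needing a word of justification are the Noetherianity of $\calH_x$ (so that passing to its completion is faithfully flat) and the locality of $\theta_x$, both of which are standard. One could alternatively cite the corresponding statement for analytifications of schemes directly, but the short deduction above from Proposition~\ref{isost} seems preferable in the present context.
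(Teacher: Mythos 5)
Your argument is correct and is exactly the standard deduction the paper has in mind: the corollary is stated as an immediate consequence of Proposition~\ref{isost}, obtained by descending flatness along the faithfully flat completion maps of the Noetherian local rings $\cO_x$ and $\calH_x$, with faithfulness coming from $\theta_x$ being local. No issues to report.
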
 
		For a coherent algebraic sheaf $\cF$ on $X$ we define a holomorphic sheaf $\cF_{hol}$ on $X_{hol}$ by $\cF_{hol}=\cF\otimes_{\cO_X} \calH_X$. 
		\begin{rmk}\label{inverses}
			In fact, $\cF$ and $\cO_X$ are sheaves on $X$, not $X_{hol}$, so we have to replace them by their inverse images $\imath^{-1}\cF$ and $\imath^{-1}\cO_X$. In what follows we will omit such details.
		\end{rmk}
		\begin{cor}\label{faithfully flat}
			(i) The functor $\bullet_{hol}$ is faithfully flat;
			
			(ii) The functor $\bullet_{hol}$ sends coherent algebraic sheaves to coherent holomorphic sheaves.
		\end{cor}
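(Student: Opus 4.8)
The plan is to deduce both parts from the stalkwise description of $\bullet_{hol}$, together with \cref{faithfully flat on stalks} and Oka's coherence theorem for $\calH_X$. First I would record the local computation: since $\cF_{hol}=\imath^{-1}\cF\otimes_{\imath^{-1}\cO_X}\calH_X$ and the tensor product commutes with taking stalks, we have $(\cF_{hol})_x=\cF_x\otimes_{\cO_x}\calH_x$ for every closed point $x$. With this in hand, part (i) becomes purely formal. Exactness: a complex $\cF'\to\cF\to\cF''$ of coherent sheaves on $X$ is exact if and only if it is exact on every stalk, and applying $\bullet_{hol}$ and passing to the stalk at $x$ amounts to tensoring the stalk complex with $\calH_x$ over $\cO_x$; this is exact because $\calH_x$ is flat over $\cO_x$ by \cref{faithfully flat on stalks}, and conversely it reflects exactness because $\calH_x$ is \emph{faithfully} flat over $\cO_x$. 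So $\bullet_{hol}$ is an exact, conservative functor, hence faithfully flat; the only point needing a word of care is the standard reduction of faithfulness to the conservativity statement $\cF_{hol}=0\Rightarrow\cF=0$, which itself follows since a nonzero coherent $\cF$ has a nonzero stalk $\cF_x$ and then $\cF_x\otimes_{\cO_x}\calH_x\neq 0$ by faithful flatness.

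For part (ii) I would argue that coherence of $\cF_{hol}$ is local on $X_{hol}$, so it suffices to treat $\cF|_U$ for $U$ a small Zariski-open subset. As $X$ is a Noetherian scheme, $\cO_X$ is a coherent sheaf of rings, so after shrinking $U$ the sheaf $\cF|_U$ admits a finite presentation $\cO_U^{\oplus m}\to\cO_U^{\oplus n}\to\cF|_U\to 0$. Applying the right-exact functor $\bullet_{hol}$ yields an exact sequence $\calH_U^{\oplus m}\to\calH_U^{\oplus n}\to(\cF|_U)_{hol}\to 0$ of $\calH_X$-modules. By Oka's theorem $\calH_X$ is a coherent sheaf of rings, and hence the cokernel of a morphism between finite free $\calH_X$-modules is a coherent holomorphic sheaf; this exhibits $(\cF|_U)_{hol}$, and therefore $\cF_{hol}$, as coherent.

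I do not expect a genuine obstacle here: both nontrivial inputs are already available, namely that $\calH_x$ is faithfully flat over $\cO_x$ (\cref{faithfully flat on stalks}, a consequence of \cref{isost}) and Oka's coherence theorem. Thus the statement reduces to routine bookkeeping with stalks and finite presentations, and one could equally well simply invoke \cite{GAGA} for it; I record the short argument above only for completeness.
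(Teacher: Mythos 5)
Your argument is correct and is exactly the standard GAGA reasoning the paper relies on: the paper states this corollary without proof, deferring to \cite{GAGA}, and the intended justification is precisely your stalkwise computation $(\cF_{hol})_x=\cF_x\otimes_{\cO_x}\calH_x$ combined with \cref{faithfully flat on stalks} for (i) and Oka's coherence theorem applied to a local finite presentation for (ii). No gaps; this is essentially the same approach.
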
 
	Now let $Y\subset X$ be a subscheme. Then we have the corresponding ideal sheaf $I_Y\subset \cO_X$. The following is a well-known fact.
	\begin{prop}\label{ideal sheaf}
		The sheaf $I_{Y,hol}\subset \calH_X$ is the ideal sheaf corresponding to the analytic subset $Y$.
	\end{prop}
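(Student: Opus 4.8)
The plan is to deduce this from the exactness of the analytification functor $\bullet_{hol}$ recorded in \cref{faithfully flat}, together with the compatibility of analytification with closed immersions, which is a purely local matter. First I would apply $\bullet_{hol}$ to the short exact sequence of coherent $\cO_X$-modules
\[
0\longrightarrow I_Y\longrightarrow \cO_X\longrightarrow \cO_Y\longrightarrow 0.
\]
By \cref{faithfully flat}(i) the functor $\bullet_{hol}$ is exact, and $\cO_{X,hol}=\cO_X\otimes_{\cO_X}\calH_X=\calH_X$, so we obtain a short exact sequence of sheaves on $X_{hol}$
\[
0\longrightarrow I_{Y,hol}\longrightarrow \calH_X\longrightarrow \cO_{Y,hol}\longrightarrow 0 .
\]
In particular $I_{Y,hol}\to\calH_X$ is injective, so $I_{Y,hol}$ is a genuine sheaf of ideals in $\calH_X$, it is coherent by \cref{faithfully flat}(ii), and its quotient is $\cO_{Y,hol}$.

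It remains to identify $I_{Y,hol}$ with the ideal sheaf attached to $Y$ as an analytic subspace of $X_{hol}$. This is local on $X$, so I may assume $X=\Spec R$ is affine and $I_Y$ is generated by finitely many elements $f_1,\dots,f_n\in R$. By \cref{faithfully flat on stalks} the ring $\calH_x$ is flat over $\cO_x$ for every closed point $x$, hence the stalk $(I_{Y,hol})_x=(I_Y)_x\otimes_{\cO_x}\calH_x$ maps isomorphically onto the ideal $(I_Y)_x\cdot\calH_x\subset\calH_x$ generated by the germs of $f_1,\dots,f_n$ at $x$. Consequently $I_{Y,hol}$ equals, as a subsheaf of $\calH_X$, the ideal sheaf $f_1\calH_X+\dots+f_n\calH_X$ generated by $f_1,\dots,f_n$. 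By the definition of the analytification of the closed subscheme $Y=V(f_1,\dots,f_n)\subset X$ (see \cite{GAGA}), this is precisely the ideal sheaf cutting out the analytic subspace $Y\subset X_{hol}$, with $\calH_X/I_{Y,hol}=\cO_{Y,hol}$ as its structure sheaf; in particular the zero locus of $I_{Y,hol}$ is $Y$ as an analytic set. Gluing over an affine cover of $X$ gives the claim.

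I do not expect a serious obstacle here: beyond the formal exactness argument the only thing to verify is the local compatibility of $\bullet_{hol}$ with closed immersions, and this follows from flatness of $\calH_X$ over $\cO_X$ together with the definition of the analytification of an affine closed subscheme. One should only be mildly careful, as noted in \cref{inverses}, that $I_Y$, $\cO_X$ and $\cO_Y$ are a priori sheaves on $X$ and must be replaced by their inverse images along $\imath\colon X_{hol}\to X$ before tensoring with $\calH_X$; this does not affect anything above.
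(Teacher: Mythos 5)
Your argument is correct, and in fact the paper offers no proof of \cref{ideal sheaf} at all — it is stated as a well-known fact — so there is nothing internal to compare against; what you give is the standard GAGA-style argument, and it is the right one. Applying the exact functor $\bullet_{hol}$ to $0\to I_Y\to \cO_X\to \cO_Y\to 0$ identifies $I_{Y,hol}$ with a coherent ideal sheaf in $\calH_X$ whose quotient is $\cO_{Y,hol}$, and the stalkwise flatness of $\calH_x$ over $\cO_x$ (\cref{faithfully flat on stalks}) shows this ideal is exactly $I_Y\cdot\calH_X$, the extension of $I_Y$, which is by definition the ideal sheaf of the analytification of the closed subscheme $Y$. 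The only point worth flagging is interpretive: you prove the statement for $Y$ as an analytic \emph{subspace} (possibly non-reduced), i.e.\ with structure sheaf $\calH_X/I_Y\calH_X$, which is indeed what the paper needs later (in \cref{analytification} the scheme $X=\Spec(\gr[U(\fg)/I])$ need not be reduced). If instead one read ``ideal sheaf corresponding to the analytic subset $Y$'' as the full vanishing ideal of the underlying reduced analytic set, flatness alone would not suffice — one would additionally need that the extended ideal is radical when $I_Y$ is (reducedness of analytification, via \cref{isost} and the analytic Nullstellensatz). With the subspace reading, which is clearly the intended one, your proof is complete.
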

	\subsection{Analytification of modules over quantization}\label{quantum GAGA}
	We want to establish the same connection for modules over quantizations. Recall the sheaves $\calD$ and $\calD_{hol}$ from \cref{differential operators}. Since every algebraic differential operator is a holomorphic one, we have a natural map $\calD_{G,h}\to \calD_{G,hol,h}$ that induces a map $i: \calD\to \calD_{hol}$. Let $x\in \fg^*$ be any point and $\calD_{hol,x}$, $\calD_{x}$ be the corresponding stalks. We have the isomorphisms $\calD_{hol,x}/h{\calD}_{hol,x}\simeq \calH_{x}$ and ${\calD}_{x}/h{\calD}_{x}\simeq \cO_{x}$. Let $\widetilde{m}_{hol,x}$ and $\widetilde{m}_x$ be the inverse images of the maximal ideals, and ${\widehat{\calD}}_{hol,x}$ and ${\widehat{\calD}}_{x}$ be the corresponding completions. The embedding $i$ extends to a map $\widehat{i}: {\widehat{\calD}}_{x}\to {\widehat{\calD}}_{hol,x}$. 
	\begin{lemma}\label{wideiso}
		The map $\widehat{i}$ is an isomorphism.
	\end{lemma}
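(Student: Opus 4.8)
The plan is to reduce the statement to the commutative GAGA comparison \cref{isost} fiber-by-fiber in the $h$-adic filtration. The key observation is that $\widehat{\calD}_x$ and $\widehat{\calD}_{hol,x}$ are both flat, complete and separated over $\CC[[h]]$, with the quotients by $(h)$ being exactly $\widehat{\cO}_x$ and $\widehat{\calH}_x$ respectively (this uses that completion commutes with the quotient by $h$, which holds because $h$ acts without $h$-torsion and the ideal $\widetilde{m}_x$ is the preimage of $m_x$). The map $\widehat{i}$ is $\CC[[h]]$-linear and reduces modulo $h$ to $\widehat{\theta}_x\colon \widehat{\cO}_x\to\widehat{\calH}_x$, which is an isomorphism by \cref{isost}.

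First I would record the structure of the two completed stalks. On each truncation $\widehat{\calD}_x/(h^k)$ we have a finite filtration by powers of $h$ whose successive quotients are all copies of $\widehat{\cO}_x$ (by flatness of $\calD$ over $\CC[[h]]$), and similarly for $\widehat{\calD}_{hol,x}/(h^k)$ with $\widehat{\calH}_x$. The map $\widehat{i}$ is filtered and induces $\widehat{\theta}_x$ on each graded piece. Since $\widehat{\theta}_x$ is an isomorphism, by the five lemma (induct on $k$) the map $\widehat{\calD}_x/(h^k)\to\widehat{\calD}_{hol,x}/(h^k)$ is an isomorphism for every $k$. Passing to the inverse limit over $k$, and using that both sides are $h$-adically complete and separated, we conclude that $\widehat{i}$ is an isomorphism.

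The step I expect to be the main technical point is the identification of $\widehat{\calD}_x/(h)$ with $\widehat{\cO}_x$ (and likewise in the holomorphic setting) — i.e., that completing $\calD_x$ at $\widetilde{m}_x$ and then killing $h$ gives the same thing as killing $h$ first (getting $\cO_x$) and then completing at $m_x$. This is where one must be careful that the ideal $\widetilde{m}_x$ is generated by $h$ together with lifts of generators of $m_x$, so that the $\widetilde{m}_x$-adic and the combined $(h)$-adic-plus-$m_x$-adic topologies agree after taking associated graded; standard Artin–Rees type arguments handle this. Once this compatibility is in place, the rest is the filtered five-lemma argument above, which is routine. The holomorphic side works identically since $\calD_{hol}$ is likewise flat over $\CC[[h]]$ with $\calD_{hol,x}/(h)\simeq\calH_x$, and the reduction mod $h$ of $\widehat{i}$ is precisely $\widehat{\theta}_x$.
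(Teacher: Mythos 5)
Your proposal rests on the same key reduction as the paper: identify the reduction of $\widehat{i}$ modulo $h$ with the map $\widehat{\theta}_x\colon \widehat{\cO}_x\to\widehat{\calH}_x$ and invoke \cref{isost}. Where you differ is in how the mod-$h$ isomorphism is promoted to an isomorphism of the completed stalks: you filter each truncation $\widehat{\calD}_x/(h^k)$ by powers of $h$, identify all graded pieces with $\widehat{\cO}_x$ (resp.\ $\widehat{\calH}_x$), run the five lemma inductively, and pass to the inverse limit. The paper instead uses a one-step standard argument: $\widehat{i}$ is a map from a complete and separated $\CC[[h]]$-module to a $\CC[[h]]$-flat (hence $h$-torsion-free) one inducing an isomorphism mod $h$, and this already forces $\widehat{i}$ to be an isomorphism (surjectivity by successive approximation using completeness of the source and separatedness of the target, injectivity using torsion-freeness of the target). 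The two mechanisms are close in spirit, but the paper's version is more economical in its hypotheses, and that matters for the point below.

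The soft spot in your argument is the flatness you use on the \emph{algebraic} side. Identifying the graded pieces of the $h$-filtration on $\widehat{\calD}_x/(h^k)$ with $\widehat{\cO}_x$ requires $\widehat{\calD}_x$ itself to be flat ($h$-torsion-free) over $\CC[[h]]$, and this does not follow formally from flatness of $\calD$: the completion at $\widetilde{m}_x$ is a completion of a module that is not finitely generated, so exactness/flatness is not automatic and needs an argument. The paper is arranged so that flatness of $\widehat{\calD}_x$ is never needed — only completeness and separatedness of the source — while flatness of $\widehat{\calD}_{hol,x}$ is obtained from $\CC[h]$-flatness of $\calD_{hol,x}$ via the cited Lemma~A.2 of the appendix reference. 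Your argument can be repaired by invoking that same lemma on both sides (and by making precise, as you yourself flag, the identification $\widehat{\calD}_x/h\widehat{\calD}_x\simeq\widehat{\cO}_x$ and its holomorphic counterpart, which the paper also uses implicitly), but as written the step ``by flatness of $\calD$ over $\CC[[h]]$'' conflates flatness of the sheaf with flatness of its completed stalk, and that is exactly the nontrivial point.
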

	\begin{proof}
		First, note that the induced by $\widehat{i}$ map ${\widehat{\calD}}_{x}/h{\widehat{\calD}}_{x}\to {\widehat{\calD}}_{hol,x}/h{\widehat{\calD}}_{hol,x}$ coincide with the map $\widehat{\theta}$ from \cref{faithfully flat on stalks}. The algebra $\calD_{hol,x}$ is $\CC[h]$-flat by construction. \cite[Lemma A.2]{appendix} implies that $\widehat{\calD}_{hol,x}$ is $\CC[[h]]$-flat. So $\widehat{i}$ is a map from the complete and separated $\CC[[h]]$-module ${\widehat{\calD}}_{x}$ to the flat $\CC[[h]]$-module $\widehat{\calD}_{hol,x}$, and $\widehat{i}$ induces an isomorphism on quotients by $(h)$. Standard argument implies that $\widehat{i}$ is an isomorphism.

	\end{proof} 
	A module $\cM$ over a sheaf of algebras $\cF$ on $X$ is called of finite type if for every point $x\in X$ there is an open neighborhood $U\ni x$ such that $\cM_U$ admits a surjective morphism from a free finitely generated $\cF_U$-module $\cF_U^{\oplus n}$. A module $\cM$ is called coherent if it is of finite type and for any open subset $U$ and any surjective map $\phi: \cF_U^{\oplus n}\to \cM_U$ the kernel of $\phi$ is of finite type. 
	
	We define the functor $\bullet_{hol}$ from the category of right coherent $\calD$-modules to the category of right $\calD_{hol}$-modules by $\cF_{hol}=\cF\otimes_{\calD} \calD_{hol}$ (in the sense of \cref{inverses}). Note that this functor sends $\calD$ to $\calD_{hol}$. 
	
	\begin{prop}\label{quantum faithfully flat}
		The functor $\bullet_{hol}$ is faithfully flat and sends coherent $\calD$-modules to coherent $\calD_{hol}$-modules.
	\end{prop}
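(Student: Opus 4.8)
The plan is to deduce Proposition~\ref{quantum faithfully flat} from its commutative analogue \cref{faithfully flat} together with the completed comparison \cref{wideiso}, using the standard ``check flatness and coherence on stalks (then on completions of stalks)'' strategy and the $h$-adic formalism.

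First I would treat \emph{flatness}. Let $x\in\fg^*$ be a closed point. Since $\bullet_{hol}=\bullet\otimes_{\calD}\calD_{hol}$ is a base change functor, it suffices to show that $\calD_{hol,x}$ is flat over $\calD_x$ for every $x$, and then that the induced functor on modules is faithfully flat. To reduce to completions, note that a finitely generated $\calD_x$-module $M$ is separated in the $\widetilde m_x$-adic topology (by the Artin--Rees/Krull intersection argument applied to the Noetherian filtered algebra, or by passing to the associated graded), so $M\hookrightarrow \widehat M$; hence it is enough to show that $\widehat{\calD}_{hol,x}$ is flat over $\widehat{\calD}_x$. But by \cref{wideiso} the map $\widehat i:\widehat{\calD}_x\to\widehat{\calD}_{hol,x}$ is an isomorphism, so flatness is immediate, and in fact $\bullet_{hol}$ becomes an isomorphism after completing stalks. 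Faithfulness then follows because a nonzero coherent $\calD$-module $\cF$ has a nonzero stalk $\cF_x$, whose completion $\widehat{\cF}_x$ is nonzero (separatedness again), and $(\cF_{hol})_x^{\wedge}\simeq \widehat{\cF}_x\otimes_{\widehat{\calD}_x}\widehat{\calD}_{hol,x}\simeq\widehat{\cF}_x\neq0$; alternatively one invokes \cref{faithfully flat on stalks} on the reductions mod $h$ and lifts, using that a map of $h$-adically complete flat $\CC[[h]]$-modules which is an isomorphism mod $h$ is an isomorphism.

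Next I would handle \emph{coherence}. Being of finite type is preserved by any base change along an algebra map sending a module over the source to a module over the target, so $\cF_{hol}$ is of finite type over $\calD_{hol}$ whenever $\cF$ is of finite type over $\calD$; in particular $\calD\mapsto\calD_{hol}$. For the second half of coherence one must show that if $\phi:\calD_{hol,U}^{\oplus n}\to\cF_{hol,U}$ is surjective then $\ker\phi$ is of finite type. Working locally and using that $\calD$ is a Noetherian sheaf of algebras (it is microlocalized from $U_h(\fg)$, whose Rees-type algebra is Noetherian), one reduces to showing $\calD_{hol}$ is itself a (left and right) Noetherian sheaf, equivalently that each stalk $\calD_{hol,x}$ is Noetherian: this follows because $\widehat{\calD}_{hol,x}\simeq\widehat{\calD}_x$ is Noetherian ($\widehat{\calD}_x$ is the $\widetilde m_x$-adic completion of the Noetherian algebra $\calD_x$) and $\calD_{hol,x}\to\widehat{\calD}_{hol,x}$ is faithfully flat, so ideals of $\calD_{hol,x}$ are detected after completion. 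Given Noetherianity, the kernel of a map of coherent sheaves is coherent by the usual diagram chase, exactly as in the commutative case treated in \cref{faithfully flat}.

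The main obstacle I expect is the coherence/Noetherianity bookkeeping rather than the flatness: one has to be careful that ``coherent'' for sheaves of noncommutative $\CC[[h]]$-algebras behaves as expected, that the Rees/microlocalization construction of $\calD_{hol}$ really produces a Noetherian sheaf whose sections on $\CC^\times$-stable affine opens are Noetherian algebras, and that passing between a stalk and its $\widetilde m_x$-adic completion is faithfully flat in this $h$-adic setting (which is where \cite[Lemma A.2]{appendix} and \cref{faithfully flat on stalks} do the real work). Once these are in place, the proof is a formal combination of \cref{wideiso}, \cref{faithfully flat}, and the observation that $\bullet_{hol}$ is an isomorphism on completed stalks. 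I would also remark that, just as in \cref{ideal sheaf}, a quotient $\calD_I$ of $\calD$ by a coherent ideal satisfies $(\calD_I)_{hol}=\calD_{I_{hol}}$, which is the form in which the proposition will be used in \cref{analytification}.
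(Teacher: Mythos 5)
For the faithful flatness half, which is where the real content of the paper's proof lies, your argument is essentially the paper's: reduce to stalks, use \cref{wideiso} to identify $\widehat{\calD}_x\simeq\widehat{\calD}_{hol,x}$, and combine the factorization $M\otimes_{\calD_x}\widehat{\calD}_x\simeq (M\otimes_{\calD_x}\calD_{hol,x})\otimes_{\calD_{hol,x}}\widehat{\calD}_{hol,x}$ with faithful flatness of both completion functors. One caveat on ordering: as literally written, ``it is enough to show that $\widehat{\calD}_{hol,x}$ is flat over $\widehat{\calD}_x$'' is not by itself a reduction --- flatness between the completions (which is trivial, being an isomorphism) says nothing about $\calD_{hol,x}$ over $\calD_x$ until you invoke precisely the factorization above and the faithful flatness of $\calD_{hol,x}\to\widehat{\calD}_{hol,x}$; you do state both ingredients later (in the faithfulness discussion and in your list of obstacles), so this is a presentational slip rather than a missing idea.

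The genuine weak point is the coherence half. Your claim that coherence (Noetherianity) of the sheaf $\calD_{hol}$ is ``equivalently'' Noetherianity of each stalk $\calD_{hol,x}$ is false as a general implication: coherence says that kernels of maps $\calD_{hol}^{\oplus n}|_U\to\calD_{hol}|_U$ are locally of finite type, and this is not a stalkwise condition --- Oka's theorem for $\calH_X$ is exactly such a nontrivial statement, and Noetherianity of the local rings $\calH_x$ does not formally yield it. So the step ``stalks Noetherian via $\widehat{\calD}_{hol,x}\simeq\widehat{\calD}_x$, hence $\calD_{hol}$ is a Noetherian sheaf'' would fail. The intended route, implicit in the paper's ``analogously to \cref{faithfully flat}'', is the GAGA one: a coherent $\calD$-module is locally finitely presented, $\bullet_{hol}$ is right exact, so $\cF_{hol}$ is locally finitely presented over $\calD_{hol}$, and coherence of $\cF_{hol}$ then follows from coherence of the sheaf of rings $\calD_{hol}$ itself, which is deduced from Oka's coherence of $\calH_X$ by the standard $h$-adic filtration argument (as for the $W$-algebras of \cite{Kashiwara2008}), not from stalk Noetherianity. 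With that replacement the rest of your proposal goes through and matches the paper's proof.
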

	\begin{proof}
		Analogously to \cref{faithfully flat}, it is enough to show that 
		$\calD_{hol,x}$ is faithfully flat over $\calD_x$. By \cref{wideiso}, for any $\calD_x$-module $\cM_x$ we have an isomorphism $\cM_x\otimes_{\calD_x} \widehat{\calD}_x\simeq \cM_{x}\otimes_{\calD_x}\calD_{hol, x}\otimes_{\calD_{hol,x}} \widehat{\calD}_{hol,x}$. Since the completion functor is faithfully flat, both $\bullet\otimes_{\calD_x} \widehat{\calD}_x$ and $\bullet\otimes_{\calD_{hol,x}} \widehat{\calD}_{hol,x}$ are faithfully flat. The proposition follows.
	\end{proof}

	Let $\calD\modd$ and $\calD_{hol}\modd$ be the categories of coherent $\calD$ and $\calD_{hol}$-modules correspondingly. Taking the quotient by the ideal $(h)$ gives functors $\calD\modd\to \Coh(X)$ and $\calD_{hol}\modd\to \Coh_{hol}(X)$ to the categories of coherent and coherent holomorphic sheaves correspondingly.
	\begin{prop}\label{commutative}
		The following diagram is commutative.
		\begin{equation*}\label{comm}
		\xymatrix{
			&\calD\modd \ar[r]^{\bullet_{hol}} \ar[d]^{/(h)}& \calD_{hol}\modd \ar[d]^{/(h)}& \\
			&\Coh(X) \ar[r]^{\bullet_{hol}} & \Coh_{hol}(X)&}
		\end{equation*}
	\end{prop}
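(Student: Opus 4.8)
The plan is to produce a natural isomorphism between the two composite functors $\calD\modd\to\Coh_{hol}(X)$, which is what it means for the square to commute. Both composites are right exact, and I would construct the isomorphism by tracing a coherent $\calD$-module $\cF$ through the two paths and matching the outputs by canonical maps. Along the top and then down, $\cF_{hol}/h\cF_{hol}=(\cF\otimes_{\calD}\calD_{hol})\otimes_{\calD_{hol}}(\calD_{hol}/h\calD_{hol})$; using that $\calD_{hol}/h\calD_{hol}\simeq\calH_X$ is built into the definition of a formal holomorphic quantization (see \cref{differential operators}), this is $\cF\otimes_{\calD}\calH_X$. Down and then along the bottom, $(\cF/h\cF)_{hol}=(\cF\otimes_{\calD}\cO_X)\otimes_{\cO_X}\calH_X$; using $\calD/h\calD\simeq\cO_X$ together with the associativity of the tensor product, this too equals $\cF\otimes_{\calD}\calH_X$. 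So the statement reduces to checking that these two copies of $\cF\otimes_{\calD}\calH_X$ employ the \emph{same} $\calD$-module structure on $\calH_X$.

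That compatibility of module structures is the one genuine point, and the only step I expect to be an obstacle. Concretely, I must show that the $\calD$-action on $\calD_{hol}/h\calD_{hol}$ obtained by restriction of scalars along $i\colon\calD\to\calD_{hol}$ agrees with the $\calD$-action on $\calH_X$ given by $\calD\to\calD/h\calD\simeq\cO_X$ followed by multiplication in the $\cO_X$-algebra $\calH_X$. I would do this by reducing $i$ modulo $h$: by the constructions of \cref{diffop} and \cref{differential operators}, $\calD$ and $\calD_{hol}$ arise from algebraic, resp. holomorphic, differential operators on $G$ by the same Rees-completion-and-microlocalization recipe, and $i$ is induced by the inclusion of algebraic into holomorphic differential operators; hence $i\bmod h$ is exactly the GAGA comparison map $\cO_X\to\calH_X$ of \cref{isost}. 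This forces the $\calD$-action on $\calH_X=\calD_{hol}/h\calD_{hol}$ to factor as $\calD\to\cO_X$ followed by multiplication in $\calH_X$, which is precisely the second structure.

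Finally I would assemble the chain $\cF_{hol}/h\cF_{hol}\simeq\cF\otimes_{\calD}\calH_X\simeq(\cF/h\cF)_{hol}$ and observe that each arrow in it is one of the standard base-change, unit, or quotient isomorphisms, hence natural in $\cF$; this yields the desired natural isomorphism of the two composites. Throughout I would suppress the inverse-image sheaves $\imath^{-1}\calD$, $\imath^{-1}\cO_X$ as in \cref{inverses}, and invoke \cref{quantum faithfully flat} to ensure that $\bullet_{hol}$ keeps us inside the categories of coherent (holomorphic) modules so that all objects land where claimed.
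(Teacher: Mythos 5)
Your proposal is correct and follows essentially the same route as the paper: both composites are computed via tensor associativity to be $\cF\otimes_{\calD}\calH_X$, which is exactly the paper's two-line argument. The only addition is your explicit check that the two $\calD$-module structures on $\calH_X$ agree (via $i\bmod h$ being the comparison map $\cO_X\to\calH_X$), a compatibility the paper leaves implicit.
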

	\begin{proof}
		Let $\cM$ be a coherent $\calD$-module. 
		
		Since $\calH_X\simeq \calD_{hol}/h\calD_{hol}$, we have $\cM_{hol}/h\cM_{hol}=\cM_{hol}\otimes_{\calD_{hol}}\calH_{X}=(\cM\otimes_{\calD}\calD_{hol})\otimes_{\calD_{hol}}\calH_{X}=\cM\otimes_{\calD}\calH_X$. Analogously, $(\cM/h\cM)_{hol}=(\cM\otimes_{\calD}\cO_X)\otimes_{\cO_X}\calH_X=\cM\otimes_{\calD}\calH_X$. 
	\end{proof}

	\subsection{Analytification of $\cJ$}
		Let $\calD'_{\OO}$ be a $G$-equivariant Hamiltonian quantization of $\OO$ and $I$ be the corresponding ideal of $U(\fg)$. Let $I_h\subset U_h$ be the corresponding ideal. The algebra $U_h/I_h$ is a quantization of $\gr[U(\fg)/I]$, and let $\calD'$ be the corresponding quantization of an affine scheme $X= \Spec(\gr [U(\fg)/I])$. From \cref{quant orbit} we have a surjective map $J^\infty U_h\to J^\infty \calD'_{\OO}$ with the kernel $\fJ$. Let $I^\wedge_{h}$ be the kernel of a corresponding map $U^\wedge_{h}\simeq J^\infty U_{h, \chi}\to J^\infty \calD'_{\chi}$ on stalks of the point $\chi$ and $I_h=I^\wedge_{h,fin}$ be the corresponding ideal in $U_{h}=\Gamma(\fg^*, \calD)$. We set $\cI\subset \calD$ to be the two-sided ideal generated by $I_h$. Since $I_h$ is a finitely presented $U_{h}$-module, we have $\cI\in \calD\modd$. 
		
		Note that $\Gamma(\calD/\cI)=U_h/I_h$, so restricting to the affine scheme $X$ the quantization $\calD/\cI$ is a microlocalization of $U_h/I_h$, i.e. $\calD'$. The discussion above implies the following proposition. 
		\begin{prop}\label{quantization-ideal}
			Let $i$ be the natural embedding of $X$ in $\fg^*$. Then we have the following isomorphism $\calD/\cJ\simeq i_*(\calD')$.
		\end{prop}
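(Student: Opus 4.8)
The plan is to show that the sheaf $\calD/\cI$ (written $\calD/\cJ$ in the statement) is topologically supported on the closed subscheme $X=\Spec(\gr[U(\fg)/I])\subset\fg^*$, hence equals the pushforward $i_*\cG$ of a formal quantization $\cG$ of $X$, and then to identify $\cG$ with $\calD'$ by means of \cref{affinequant}. I use throughout that $\calD$ is the microlocalization on $\fg^*$ of the algebra $U_h:=U_h(\fg)=\Gamma(\fg^*,\calD)$ of \cref{differential operators}, so that $\calD/h\calD=\cO_{\fg^*}=S(\fg)$, and that $\cI$ is the two-sided ideal of $\calD$ generated by the $\CC^\times$-stable ideal $I_h\subset U_h$.

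First I would record global sections and the support. Since $I_h$ is finitely presented over $U_h$, the sheaves $\cI$ and $\calD/\cI$ are coherent $\calD$-modules, and because $\fg^*$ is affine the functor $\Gamma(\fg^*,-)$ is exact on coherent $\calD$-modules; applying it to $0\to\cI\to\calD\to\calD/\cI\to0$ gives $\Gamma(\fg^*,\calD/\cI)=U_h/I_h$, as already noted. Passing to the quotient modulo $h$ is right exact, and the image of $\cI$ in $S(\fg)=\calD/h\calD$ is the ideal generated by the image of $I_h$ in $U_h/hU_h=S(\fg)$, namely the symbol ideal $\gr I$; hence $(\calD/\cI)/h(\calD/\cI)=S(\fg)/\gr I=i_*\cO_X$. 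Consequently the stalk of $\calD/\cI$ at a point $x\notin X$ has vanishing reduction modulo $h$ and so is zero by $h$-adic separatedness, so $\calD/\cI$ is supported on the closed set $X$ and equals $i_*\cG$ with $\cG:=i^{-1}(\calD/\cI)$.

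It remains to identify $\cG$ with $\calD'$. Exactness of $i^{-1}$ and the closed-immersion identity $i^{-1}i_*\simeq\mathrm{id}$ give $\cG/h\cG=i^{-1}(i_*\cO_X)=\cO_X$ as Poisson sheaves, and $\cG$ is $\CC[[h]]$-flat, complete and separated, these properties being inherited from $\calD$ through the quotient $\calD/\cI$ (using that $U_h/I_h$ is a formal quantization of $\gr[U(\fg)/I]$, so $\cI$ is $h$-saturated) and through the restriction $\cG=i^{-1}(\calD/\cI)$. Thus $\cG$ is a formal quantization of the affine scheme $X$, with $\Gamma(X,\cG)=\Gamma(\fg^*,i_*\cG)=\Gamma(\fg^*,\calD/\cI)=U_h/I_h$. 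By definition $\calD'$ is the formal quantization of $X$ attached to the algebra $U_h/I_h$ under the bijection of \cref{affinequant}; since $\cG$ is attached to the same algebra, \cref{affinequant} yields a canonical isomorphism $\cG\simeq\calD'$, and therefore $\calD/\cI\simeq i_*\cG\simeq i_*(\calD')$.

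I expect the only point needing genuine (if routine) care to be the compatibility of microlocalization with quotients and with restriction to a closed subscheme --- concretely, that the reduction of $\calD/\cI$ modulo $h$ is exactly $i_*\cO_X$ and not a nilpotent thickening of it, and that the restricted sheaf $\cG$ is literally the microlocalization of $U_h/I_h$ rather than merely some quantization of $X$ with the right global sections. I would check this on the cover of $\fg^*$ by basic open sets $D(f)$, $f\in S(\fg)$, where microlocalization is the $h$-adic completion of an Ore localization of $U_h$ and so commutes with passing to a quotient algebra; \cref{affinequant} then promotes the local identification to the global isomorphism. Everything else is bookkeeping.
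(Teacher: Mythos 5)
Your argument is correct and follows essentially the same route as the paper's (very terse) one: identify $\Gamma(\fg^*,\calD/\cI)=U_h/I_h$, observe that the mod-$h$ reduction of $\calD/\cI$ is $i_*\cO_X$ so the quotient is supported on $X$, and identify its restriction to $X$ with the microlocalization $\calD'$ of $U_h/I_h$ via \cref{affinequant}. The one point to phrase more carefully is the vanishing off $X$: stalks of $\calD/\cI$ need not be $h$-adically separated (they are colimits), but since $\calD/\cI$ is a coherent $\calD$-module its stalks are finitely generated over $\calD_x$, where every element of $1+h\calD_x$ is invertible, so Nakayama's lemma yields the same conclusion.
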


		The main theorem of this section is as follows.
		\begin{theorem}\label{analytification}
			The sheaf $\cJ_{hol}$ is an $h$-saturated {\bfseries two-sided} ideal in $\calD_{hol}$. Moreover, the quotient $\calD_{hol}/\cJ_{hol}$ is a holomorphic quantization of $X$ as an analytic subspace. 
		\end{theorem}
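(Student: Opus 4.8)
The plan is to obtain $\calD_{hol}/\cJ_{hol}$ directly from the exactness of the analytification functor $\bullet_{hol}$ (\cref{quantum faithfully flat}) and then pin down its ring-theoretic properties stalkwise, using the completion isomorphism of \cref{wideiso}. First I would apply $\bullet_{hol}$ to the short exact sequence $0\to\cJ\to\calD\to\calD/\cJ\to 0$. Since $\calD_{hol}$ is faithfully flat over $\calD$, this functor is exact, so it yields a short exact sequence $0\to\cJ_{hol}\to\calD_{hol}\to(\calD/\cJ)_{hol}\to 0$; in particular $\cJ_{hol}$ is a sub-$\calD_{hol}$-module of $\calD_{hol}$, it is coherent by \cref{quantum faithfully flat}, and $\calD_{hol}/\cJ_{hol}\simeq(\calD/\cJ)_{hol}\simeq(i_*\calD')_{hol}$ by \cref{quantization-ideal}.

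Next I would show $\cJ_{hol}$ is two-sided. As this is a local statement it suffices to check it on each stalk $\cJ_{hol,x}\subset\calD_{hol,x}$, and since completion is faithfully flat (as used in the proof of \cref{quantum faithfully flat}, compare also \cref{faithfully flat on stalks}), it is enough to check that the completion $\widehat{\cJ}_{hol,x}$ is a two-sided ideal of $\widehat{\calD}_{hol,x}$. But \cref{wideiso} identifies $\widehat{\calD}_{hol,x}$ with $\widehat{\calD}_x$, and under this identification $\widehat{\cJ}_{hol,x}$ corresponds to $\widehat{\cJ}_x$, the completion (equivalently, the closure) of $\cJ_x$; since $\cJ_x$ is two-sided in $\calD_x$ and multiplication is continuous in the relevant adic topology, this closure is a two-sided ideal of $\widehat{\calD}_x$. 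Descending along the faithfully flat map $\calD_{hol,x}\to\widehat{\calD}_{hol,x}$ gives that $\cJ_{hol,x}$, hence $\cJ_{hol}$, is two-sided; consequently $\calD_{hol}/\cJ_{hol}$ is a sheaf of $\CC[[h]]$-algebras inheriting the commutator bound from $\calD_{hol}$.

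For the remaining conditions I would argue as follows. The quotient $\calD/\cJ\simeq i_*\calD'$ is $\CC[[h]]$-flat, since $\calD'$ is a quantization; thus multiplication by $h$ is injective on $\calD/\cJ$, and applying the exact functor $\bullet_{hol}$ shows $h$ acts injectively on $(\calD/\cJ)_{hol}=\calD_{hol}/\cJ_{hol}$, i.e.\ $\cJ_{hol}$ is $h$-saturated and the quotient is $\CC[[h]]$-flat. Completeness and separatedness of $\calD_{hol}/\cJ_{hol}$ in the $h$-adic topology follow from the fact that it is a coherent $\calD_{hol}$-module with $h$-saturated kernel, by the standard argument used in the proof of \cref{wideiso} (equivalently, its stalks are $\widehat{\calD}_{hol,x}/\widehat{\cJ}_{hol,x}$, which are $h$-adically complete and separated). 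Finally, reducing mod $h$ and using \cref{commutative} together with \cref{quantization-ideal}, we get $(\calD_{hol}/\cJ_{hol})/h\simeq\big((\calD/\cJ)/h(\calD/\cJ)\big)_{hol}=(i_*\cO_X)_{hol}$, which by \cref{ideal sheaf} is precisely $\calH_X$, the structure sheaf of $X$ as a (possibly non-reduced) analytic subspace of $\fg^*$. Each of the identifications in this chain is $\CC^\times$-equivariant and compatible with Poisson brackets, because on stalks everything is controlled by the completion isomorphism of \cref{wideiso}; this establishes that $\calD_{hol}/\cJ_{hol}$ is a holomorphic quantization of $X$.

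The step I expect to be the main obstacle is the two-sidedness of $\cJ_{hol}$: the functor $\bullet_{hol}$ is built from a tensor product of right $\calD$-modules and so does not see the bimodule structure a priori, which forces the descent through the stalk-and-completion comparison, making \cref{wideiso} the essential input. A secondary point needing care is that $X=\Spec(\gr[U(\fg)/I])$ need not be reduced, so one must invoke the scheme-theoretic GAGA statement \cref{ideal sheaf} rather than a reduced version in order to recover the correct analytic-subspace structure sheaf $\calH_X$.
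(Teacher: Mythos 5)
Your proposal is correct and follows essentially the same route as the paper: the right-ideal structure comes from the construction of $\bullet_{hol}$, two-sidedness is checked stalkwise by passing to completions via \cref{wideiso} and descending along the faithfully flat completion functor (the paper's \cref{two-sided} does exactly this, via the intersection $\widehat{\cJ}_x\cap\calD_{hol,x}$ and a cokernel argument), and the mod-$h$ quotient is identified with $\calH_X$ using the compatibility of $\bullet_{hol}$ with reduction mod $h$ (\cref{commutative}) together with \cref{quantization-ideal} and \cref{ideal sheaf}, which is the content of the paper's concluding diagram. The only cosmetic difference is your $h$-saturation argument ($\CC[[h]]$-flatness of $\calD/\cJ$ plus exactness of $\bullet_{hol}$), which is slightly more direct than the paper's stalkwise argument via $h\widehat{\calD}_x\cap\calD_{hol,x}=h\calD_{hol,x}$.
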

		Let us check the first part of the theorem. We have the following more specific statement.
		\begin{prop}\label{two-sided}
			The sheaf $\cJ_{hol}$ is a two-sided ideal in $\calD_{hol}$ generated by $\cJ_\chi$. 
		\end{prop}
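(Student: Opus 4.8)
The plan is to first observe that $\cJ_{hol}$ is, at the outset, only visibly a right ideal, and then to promote ``right'' to ``two-sided'' by passing to $\widetilde m_x$-adic completions of stalks, where \cref{wideiso} lets us transport the two-sidedness of the algebraic ideal $\cJ$ to $\calD_{hol}$. By \cref{quantum faithfully flat} the functor $\bullet_{hol}$ is faithfully flat, hence exact, so applying it to the inclusion $\cJ\hookrightarrow\calD$ of right $\calD$-modules gives an inclusion $\cJ_{hol}=\cJ\otimes_\calD\calD_{hol}\hookrightarrow\calD\otimes_\calD\calD_{hol}=\calD_{hol}$; from now on I regard $\cJ_{hol}$ as a coherent right $\calD_{hol}$-submodule of $\calD_{hol}$. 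Recall that $\cJ$ was defined as the two-sided ideal of $\calD$ generated by the global sections $I_h=I^\wedge_{h,fin}\subset U_h=\Gamma(\fg^*,\calD)$, and that $I_h$ consists of germs at $\chi$; thus once $\cJ_{hol}$ is known to be two-sided it is automatically the two-sided ideal of $\calD_{hol}$ generated by $I_h$, hence by $\cJ_\chi$, which is the last assertion. So the content to prove is two-sidedness.

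The condition $\calD_{hol}\cdot\cJ_{hol}\subseteq\cJ_{hol}$ of subsheaves of $\calD_{hol}$ may be checked stalkwise, so fix a (closed) point $x\in\fg^*$; it suffices to show that $(\cJ_{hol})_x$ is a two-sided ideal of $\calD_{hol,x}$. Here $\calD_x$ and $\calD_{hol,x}$ are Noetherian local rings: each is $h$-adically complete and flat (hence $h$-torsion-free) over $\CC[[h]]$, with $\calD_x/h\calD_x\simeq\cO_x$ and $\calD_{hol,x}/h\calD_{hol,x}\simeq\calH_x$ Noetherian (the latter by Rückert's theorem), and a complete $h$-torsion-free algebra with Noetherian reduction modulo $h$ is Noetherian; moreover $h$ lies in the maximal ideal, so the $\widetilde m_x$-adic completion is faithfully flat. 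Using \cref{wideiso} (the isomorphism $\widehat{\calD}_x\xrightarrow{\ \sim\ }\widehat{\calD}_{hol,x}$), coherence of $\cJ$ and $\cJ_{hol}$, and exactness of $\bullet_{hol}$, one obtains $(\cJ_{hol})_x\otimes_{\calD_{hol,x}}\widehat{\calD}_{hol,x}\simeq \cJ_x\otimes_{\calD_x}\calD_{hol,x}\otimes_{\calD_{hol,x}}\widehat{\calD}_{hol,x}\simeq \cJ_x\otimes_{\calD_x}\widehat{\calD}_x=\widehat{\cJ_x}$, the closure of $\cJ_x$ in $\widehat{\calD}_x$. Since $\cJ_x$ is a two-sided ideal of $\calD_x$ (being the stalk of a sheaf of two-sided ideals) and multiplication is continuous, $\widehat{\cJ_x}$ is a two-sided ideal of $\widehat{\calD}_x\simeq\widehat{\calD}_{hol,x}$.

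It remains to descend: $(\cJ_{hol})_x$ is a right ideal of $\calD_{hol,x}$ whose extension along the faithfully flat map $\calD_{hol,x}\to\widehat{\calD}_{hol,x}$ is two-sided. For any $a\in\calD_{hol,x}$, the right-$\calD_{hol,x}$-linear map $(\cJ_{hol})_x\to\calD_{hol,x}/(\cJ_{hol})_x$ sending $m\mapsto\overline{am}$ becomes, after $\bullet\otimes_{\calD_{hol,x}}\widehat{\calD}_{hol,x}$, left multiplication by $a$ on $\widehat{\cJ_x}\subset\widehat{\calD}_{hol,x}$ and therefore vanishes; by faithful flatness the original map vanishes, i.e. $a\cdot(\cJ_{hol})_x\subseteq(\cJ_{hol})_x$. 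Hence $(\cJ_{hol})_x$ is two-sided, and \cref{two-sided} follows. The two-sidedness itself is formal once one is in the completion; the points that require care are (a) identifying the completed stalk of $\cJ_{hol}$ with the closure of the \emph{algebraic} ideal $\cJ$, which is precisely where coherence of $\cJ$ and \cref{wideiso} are used to commute the various tensor products with completion, and (b) the Noetherianity of the local rings $\calD_x$ and $\calD_{hol,x}$, for which the essential input is Rückert's theorem on germs of holomorphic functions; this is the main obstacle.
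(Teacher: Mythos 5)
Your proposal is correct and follows essentially the same route as the paper's own proof: note that $\cJ_{hol}$ is by construction a right ideal generated by $\cJ_\chi$, reduce two-sidedness to stalks, use \cref{wideiso} to identify the completed stalks $\widehat{\cJ}_x\simeq\widehat{\cJ}_{hol,x}$ where two-sidedness is inherited from $\cJ_x$, and descend via exactness and faithfulness of the completion functor on finitely generated modules --- your test-map descent (killing $m\mapsto \overline{am}$ after faithfully flat completion) is a minor repackaging of the paper's comparison of $\cJ_{hol,x}$ with the two-sided ideal $\widehat{\cJ}_x\cap\calD_{hol,x}$ and the vanishing of the cokernel. One caveat: your justification of Noetherianity of $\calD_{hol,x}$ via ``$h$-adic completeness of the stalk'' is not quite right (stalks are direct limits of sections and need not be $h$-adically complete), but the Noetherianity and faithful flatness of completion you invoke are standard facts about such microlocal sheaves and are used without proof in the paper as well.
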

		\begin{proof}
				From the construction of the functor $\bullet_{hol}$ the sheaf $\cJ_{hol}$ is a right ideal in $\calD_{hol}$ generated by $\cJ_\chi$. 
	We have to check that for every point $x\in \fg^*$ the stalk $\cJ_{hol,x}$ is a two-sided ideal.
				
				By \cref{wideiso}, $\widehat{\cJ}_x\simeq \widehat{\cJ}_{hol,x}$. We set $\cI_x=\widehat{\cJ}_x\cap \calD_{hol, x}$. Note that $\cJ_{hol,x}\subset \cI_x$ and $\cI_x$ is a two-sided ideal. From the universal property of completions the embedding map $i: \cI_x\to \widehat{\cJ}_x$ factors through $f: \widehat{\cI}_x\to \widehat{\cJ}_x$. Note that $f$ is injective. Therefore $\widehat{\cJ}_{hol,x}\subset\widehat{\cI}_x\subset \widehat{\cJ}_x$, so $\widehat{\cI}_x=\widehat{\cJ}_x=\widehat{\cJ}_{hol,x}$. 
				
				Let $g: \cJ_{hol, x}\to \cI_x$ be the natural embedding and set $C=\Coker(g)$. The completion functor $\widehat{\bullet}=\bullet \otimes_{\calD_{hol, x}} \widehat{\calD}_{hol,x}$ is exact on finitely generated $\calD_{hol, x}$-modules, so $\widehat{C}=\Coker(\widehat{g})=0$. From the faithfulness of the completion functor $C=0$, so $\cI_x=\cJ_{hol,x}$, and the latter one is $2$-sided.  		
		\end{proof}
		\begin{proofanal}
			We have to check that $\cJ_{hol, x}$ is $h$-saturated for every $x\in \fg^*$. Recall that $\cJ$ and therefore $\widehat{\cJ}_x$ are $h$-saturated. Hence the intersection $\cI_x=\widehat{\cJ}_x\cap \calD_{hol, x}$ is $h$-saturated. Indeed, if $hf\in \widehat{\cJ}_x\cap \calD_{hol, x}$ for some $f\in \widehat{\calD}_x$ then $f\in \widehat{\cJ}_x$. By \cite[2.4.C]{Matsumura} $h\widehat{\calD}_x\cap \calD_{hol, x}=h\calD_{hol, x}$, so $f\in \calD_{hol, x}$. But we proved that $\cJ_{hol,x}=\cI_x$, so $\cJ_{hol, x}$ is $h$-saturated. Let $I_X\subset \cO_{\fg^*}$ be the ideal sheaf corresponding to the closed subscheme $X\subset \fg^*$. Since $\calD'_{hol}=\calD_{hol}/\cJ_{hol}$ is $\CC[[h]]$-flat, the following diagram is commutative.
			\begin{equation*}
			\xymatrix{
				&& 0 \ar[d]& 0 \ar[d]& 0\ar[d]& &\\
				&0\ar[r]& h\cJ_{hol}\ar[r] \ar[d]& h\calD_{hol}\ar[r] \ar[d]& h\calD'_{hol}\ar[r]\ar[d]& 0&\\
				&0\ar[r]& \cJ_{hol}\ar[r] \ar[d]& \calD_{hol}\ar[r] \ar[d]& \calD'_{hol}\ar[r]\ar[d]& 0&\\
				&0\ar[r]& I_{hol,X}\ar[r] \ar[d]& \calH\ar[r] \ar[d]& i_*(\calH_{X})\ar[r]\ar[d]& 0&\\
				&& 0 & 0 & 0& &}		
			\end{equation*}
			Therefore $\calD'_{hol}$ restricted to $X$ is a holomorphic quantization of $X$. \cref{analytification} is proved. 
			
		\end{proofanal}
\section{Lift to the normalization}\label{lift}
\subsection{Singularities in codimension $2$}\label{extend}
	Let $X$ be the same as in \cref{analytification of quantization}. In this section we show that any holomorphic quantization $\calD_{hol}$ of $X$ can be lifted to a holomorphic quantization $\overline{\calD}_{hol}$ of the normalization $\Spec(\CC[\OO])$. We set $\pi: \Spec(\CC[\OO])\to X$ to be the normalization map. We will denote $\Spec(\CC[\OO])$ by $\widetilde{X}$ and omit indexes ${hol}$. To avoid confusion we will denote sheaves of holomorphic functions on $X$ and $\widetilde{X}$ by $\calH$ and $\widetilde{\calH}$ correspondingly. Recall the following well-known fact that we will prove in the Appendix.
	\begin{prop}\label{codim 2}
		Let $X$ be a normal Cohen-Macaulay affine Poisson variety with finitely many symplectic leaves. Let $X_2$ be the union of the smooth part $X^{reg}$ and all symplectic leaves of codimension $2$. Then any (holomorphic) quantization of $X_2$ uniquely extends to a (holomorphic) quantization of $X$. 
	\end{prop}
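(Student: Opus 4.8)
The plan is to prove the statement by a Hartogs-type extension argument: I will show that the open embedding $j\colon X_2\hookrightarrow X$ has complement $Z:=X\setminus X_2$ closed of codimension at least $3$ in $X$, and that a quantization of $X_2$ therefore extends uniquely to $X$ via $\overline{\calD}:=\varprojlim_{k}\, j_*\big(\calD/h^k\calD\big)$. For the codimension bound: since $X$ is normal, its singular locus has codimension $\ge 2$, and being preserved by all Hamiltonian flows it is a union of symplectic leaves (there being finitely many); thus every leaf meeting $X\setminus X^{reg}$ has codimension $\ge 2$, and once the codimension-$2$ leaves are absorbed into $X_2$, every leaf in $Z$ has codimension $\ge 3$. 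The union of leaves of codimension $\ge 3$ is closed, because the closure of a leaf only adds leaves of strictly larger codimension. In particular $X_2$ is $\CC^\times$-stable and Zariski open, so $j$ is an open embedding in the conical topology.

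The one non-formal input is the behaviour of $j_*$ on $\cO$, which I will get from the Cohen--Macaulay hypothesis: if $X$ is Cohen--Macaulay and $Z\subset X$ is closed of codimension $\ge 3$, then the local cohomology sheaves $\mathcal{H}^i_Z(\cO_X)$ vanish for $i\le 2$, equivalently $j_*\cO_{X_2}=\cO_X$ and $R^1j_*\cO_{X_2}=0$. From here a d\'evissage along the $h$-adic filtration does the rest. Using that $\calD$ is flat over $\CC[[h]]$, hence $h$-torsion-free, multiplication by $h$ gives short exact sequences $0\to \calD/h^{k-1}\calD\xrightarrow{\,h\,}\calD/h^{k}\calD\to \cO_{X_2}\to 0$ on $X_2$; applying $j_*$ and inducting on $k$ yields $R^1j_*(\calD/h^{k}\calD)=0$ and identifies $j_*(\calD/h^{k}\calD)$ with a sheaf on $X$ that is an iterated self-extension of $\cO_X$ and restricts to $\calD/h^{k}\calD$ on $X_2$. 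The transition maps $j_*(\calD/h^{k+1}\calD)\to j_*(\calD/h^{k}\calD)$ are then surjective, so $\varprojlim^1$ vanishes, $\overline{\calD}/h^{k}\overline{\calD}\cong j_*(\calD/h^{k}\calD)$, the sheaf $\overline{\calD}$ is complete and separated in the $h$-adic topology, and it is $h$-torsion-free, hence flat over $\CC[[h]]$.

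It then remains to check that $(\overline{\calD},\theta)$ is a quantization of $X$ restricting to $\calD$, and that it is the only one. The case $k=1$ gives $\overline{\calD}/h\overline{\calD}=j_*\cO_{X_2}=\cO_X$ as sheaves of Poisson algebras: the bracket $[\,\cdot\,,\cdot\,]/h^d$ on $\overline{\calD}$ reduces to the Poisson bracket of $\cO_X$, since the two agree on the dense open $X_2$. Taking for $\theta$ the isomorphism inherited from $\calD$, we get $\overline{\calD}|_{X_2}=\varprojlim_{k}(\calD/h^{k}\calD)=\calD$ by the $h$-completeness of $\calD$, so the restriction of $(\overline{\calD},\theta)$ to $X_2$ is the given quantization. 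Uniqueness is the same computation read backwards: if $\calD'$ is a quantization of $X$ with $\calD'|_{X_2}\cong\calD$, then each $\calD'/h^{k}\calD'$ is an iterated self-extension of $\cO_X$, so $\mathcal{H}^0_Z(\calD'/h^{k}\calD')=\mathcal{H}^1_Z(\calD'/h^{k}\calD')=0$ and hence $\calD'/h^{k}\calD'\xrightarrow{\ \sim\ }j_*j^*(\calD'/h^{k}\calD')=j_*(\calD/h^{k}\calD)$; passing to the inverse limit gives $\calD'\cong\overline{\calD}$. The $\CC^\times$-equivariance of everything is automatic, since $X_2$ is $\CC^\times$-stable and $j_*$ commutes with the torus action.

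For the holomorphic version the argument is word for word the same, provided the local cohomology vanishing above holds for coherent analytic sheaves on $X$: this follows because the sheaf of holomorphic functions on $X$ is again Cohen--Macaulay (its completed stalks coincide with those of $\cO_X$ by \cref{isost}, and Cohen--Macaulayness is detected on the completion) together with the classical theorems on extension of coherent analytic sheaves across analytic subsets of codimension $\ge 3$. I expect this analytic ingredient to be the only point requiring genuine care; everything else is formal manipulation with the $h$-adic filtration and the standard local cohomology of Cohen--Macaulay local rings.
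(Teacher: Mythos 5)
Your argument is correct, and it rests on the same key input as the paper's proof: Cohen--Macaulayness of $X$, transferred to the analytic side exactly as you indicate via \cref{isost} (this is the paper's \cref{anCohMac}), together with the observation that $X\setminus X_2$ is a closed union of symplectic leaves of codimension at least $3$, so that the local cohomology of the structure sheaf along it vanishes in degrees $\le 2$. The difference lies in how the $h$-adic direction is handled. The paper pushes forward the whole sheaf $\calD$ at once and, for Stein $Y$, runs the long exact sequence of $0\to h\calD\to \calD\to \calH_{Y_2}\to 0$; after killing $H^1(Y_2,\calH_{Y_2})$ by the depth argument it still needs a separate completeness trick (\cref{Cech}) to upgrade the surjection $H^1(Y_2,h\calD)\to H^1(Y_2,\calD)$ to an isomorphism and thereby kill the connecting map out of $\Gamma(Y,\calH_X)$. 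You instead d\'evisse modulo $h^k$, so that only the coherent statements $j_*\cO_{X_2}=\cO_X$ and $R^1j_*\cO_{X_2}=0$ (and their analytic counterparts for $\calH$) enter, and the completed sheaf is recovered as an inverse limit; this avoids applying any cohomological lemma to the non-coherent complete sheaf $\calD$ itself, and it also makes the uniqueness assertion explicit via $H^0_Z=H^1_Z=0$ for $\calD'/h^k\calD'$, a point the paper's proof does not spell out. What each buys: the paper's version is shorter once its \v{C}ech--completeness lemma is granted, yours is more elementary and self-contained. One step to tighten: surjectivity of the transition maps $j_*(\calD/h^{k+1}\calD)\to j_*(\calD/h^{k}\calD)$ as maps of sheaves does not by itself control $\varprojlim^1$ of sections, so check it on sections over affine (resp.\ Stein) opens $U$, where the cokernel injects into $H^1(U\cap X_2,\cO)$, which vanishes by the same depth computation; note also that your d\'evissage only uses extensions of sheaves of abelian groups, so no $\cO$-module structure on $\calD/h^k\calD$ is needed and the analytic extension theorems are invoked only for $\calH_X$ itself.
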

	
	Therefore it is enough to lift a holomorphic quantization of $X$ to one of $\widetilde{X}_2$. The smooth locus of $X$ is the orbit $\OO$, so it is enough to construct a sheaf of $\CC[[h]]$-algebras $\widetilde{\calD}$ on $\widetilde{X}_2$ that extends the restriction $\calD_\OO$. Let $i: \OO\to \widetilde{X}$ be the natural embedding. We set $\widetilde{\calD}=i_*\calD_{\OO}$.
	\begin{prop}\cite[Theorem 2]{Kraft-Procesi}
		Let $\OO$ be a nilpotent coadjoint orbit in a simple classical Lie algebra and $\OO'$ an open orbit in the boundary $\delta \OO=\overline{\OO}-\OO$. If $\OO'$ is of codimension $2$ then the singularity of $\overline{\OO}$ in $\OO'$ is smoothly equivalent to an isolated surface singularity of type $A_k$, $D_k$ or $A_k\cup A_k$, where the last one is the non-normal union of two singularities of type $A_k$ meeting transversally in the singular point.
	\end{prop}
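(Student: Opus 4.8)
The plan is to reduce the statement, via the Slodowy--slice construction, to the description of a $2$-dimensional transverse slice, and then to identify that slice using the linear-algebra models of nilpotent orbit closures in the classical Lie algebras. Fix $x\in\OO'$ and complete it to an $\mathfrak{sl}_2$-triple $(x,h,f)$ in $\fg$. Since $\overline{\OO}$ is a conical $G$-stable subvariety of $\fg^*$ and $\OO'$ is a single $G$-orbit, the singularity of $\overline{\OO}$ is the same at every point of $\OO'$, and Slodowy's transverse-slice theorem shows that, in the \'etale (equivalently, the complex-analytic) topology, $\overline{\OO}$ near $x$ is isomorphic to a product of a smooth variety with $\mathcal{S}:=(x+\fg^{f})\cap\overline{\OO}$, where $\fg^{f}=\ker(\operatorname{ad} f)$. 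The affine slice $x+\fg^{f}$ meets every adjoint orbit transversally and carries a contracting $\CC^\times$-action built from the scaling on $\fg^*$ and the cocharacter of $h$; hence $\mathcal{S}$ is an affine surface with $\mathcal{S}\setminus\{x\}\subset\OO$ smooth, so $\mathcal{S}$ has an isolated singularity at $x$ and the proposition amounts to showing that $\mathcal{S}$ is, up to a smooth factor, of type $A_k$, $D_k$, or a transverse union of two copies of $A_k$.

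Next I would invoke the classical realizations. Writing $\OO=\OO_\lambda$, $\OO'=\OO_\mu$ for partitions (subject to the usual parity constraints in types $B$, $C$, $D$), the hypothesis says that $\lambda$ covers $\mu$ in the dominance order; realize $\overline{\OO}_\lambda$ inside $\mathfrak{gl}(V)$, $\mathfrak{sp}(V)$ or $\mathfrak{so}(V)$ as the variety of nilpotent operators satisfying the rank conditions $\operatorname{rk}(N^{i})\le r_i(\lambda)$ (compatibly with the bilinear form in types $B/C/D$). Picking a basis of $V$ adapted to the Jordan decomposition of $x$ and to the $\mathfrak{sl}_2$-action yields the Kraft--Procesi reduction lemma: the Jordan blocks of $x$ that are ``common'' to $\lambda$ and $\mu$ contribute only a smooth factor, so $\mathcal{S}$ is isomorphic to the analogous slice variety attached to a reduced pair $(\bar\lambda,\bar\mu)$ on a smaller space $\bar V$, carrying the induced form in the $B/C/D$ cases. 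Since the degeneration has codimension $2$, the reduced pair is forced into a short explicit list: in type $A$ one gets a single ``column'' move, whose slice is the Kleinian quotient $\CC^{2}/\ZZ_{k+1}=A_k$ (or, for the shortest ``row'' move, $\overline{\OO}_{\min}(\mathfrak{sl}_2)=A_1$); in types $B/C/D$ one gets in addition the binary-dihedral quotient $\CC^{2}/\Gamma=D_k$, and --- since orbit closures in $\mathfrak{sp}$ and $\mathfrak{so}$ need not be normal --- one degenerate configuration in $\mathfrak{so}(V)$ whose slice is the non-normal transverse union of two copies of $A_k$. Each of these $2$-dimensional models is then identified with the named surface singularity by a direct invariant-theoretic computation.

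It is worth recording a partial shortcut that also serves as a consistency check: the transverse slice to a symplectic leaf in a variety with symplectic singularities is again such a variety, so whenever $\overline{\OO}$ is normal the surface $\mathcal{S}$ is a normal symplectic surface and is therefore a Du Val singularity of type $A_k$, $D_k$, $E_6$, $E_7$ or $E_8$; thus in the normal case only the non-occurrence of the three exceptional types requires an argument. The genuine work --- and the step I expect to be the main obstacle --- is the reduction lemma together with the bookkeeping of the bilinear form in types $B$, $C$, $D$: it is precisely there that one checks that no exceptional Du Val singularity can arise, and that the only non-normal possibility is the transverse union $A_k\cup A_k$ with equal $k$. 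Once one is reduced to a $2$-dimensional model, the final identification with the standard surface singularities is a finite computation.
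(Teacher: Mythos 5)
Note first that the paper does not prove this proposition at all: it is imported verbatim from Kraft--Procesi (the citation \cite[Theorem 2]{Kraft-Procesi}), and the rest of Section 5 only uses the statement. So there is no in-paper argument to compare with; what you have written is, in effect, an outline of the original Kraft--Procesi proof itself, and as an outline it is accurate: transverse (Slodowy) slice at a point of $\OO'$, conical $\CC^\times$-action forcing an isolated surface singularity, the row/column cancellation (``reduction'') lemma, and a finite case analysis of the irreducible codimension-$2$ degenerations identified by explicit invariant theory. That is indeed the route Kraft and Procesi take.

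As a proof, however, the proposal has a genuine gap, and you identify it yourself: the reduction lemma and the classification/identification of the irreducible codimension-$2$ degenerations are only named, not carried out, and they constitute essentially the whole content of the theorem. Your ``shortcut'' via symplectic singularities cannot close this gap: it applies only when $\overline{\OO}$ is normal near $\OO'$, it does not by itself exclude the Du Val types $E_6$, $E_7$, $E_8$ (which genuinely occur for minimal degenerations in exceptional Lie algebras, so their absence here is a fact about the classical case-by-case analysis, not a general principle), and it says nothing about the non-normal locus, which is exactly where $A_k\cup A_k$ appears. One concrete inaccuracy in the sketch: the non-normal singularities $A_k\cup A_k$ are not confined to $\mathfrak{so}(V)$ --- the standard first example is symplectic, namely $\overline{\OO}_{[2,2]}\subset \mathfrak{sp}_4$ along the orbit $[2,1,1]$, where the slice is a transverse union of two $A_1$'s --- so the bookkeeping of which degenerate configurations occur, and with which form, is precisely the part that cannot be waved through. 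For the purposes of this paper the correct move is simply to cite Kraft--Procesi, as the author does; if you want a self-contained argument you would need to actually prove the reduction lemma and work through their tables.
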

	By two singularities meeting transversally in the point we mean the following. The Kleinian singularity $A_k$ is an affine scheme $\Spec(A)$, where $A=\CC[x,y,z]/(xy-z^{k+1})$. We have a maximal ideal $(x,y,z)$, corresponding to the singular point $0\in \Spec(A)$, and a quotient map $A\to \CC$. Consider a map $A\oplus A\to \CC$ and set $B$ to be its kernel. We define the singularity $A_k\cup A_k$ as $\Spec(B)$.
	
	Let $\OO'$ be a codimension $2$ orbit in $\overline{\OO}$, and $S$ be the corresponding singularity. Consider a point $x\in \OO'\subset X$. We say that the open neighborhood $V\subset X$ of $x$ is {\itshape standard} if $V^{reg}=U\times S/\{0\}$ for some open $U\subset \OO'$. Note that standard neighborhoods of $x$ form a base of open subsets of $X$ containing $x$.
	
	 If $S$ is of type $A_k$ or $D_k$, the corresponding singularity in the normalization $\widetilde{X}$ is equivalent to the one in $\overline{\OO}$. Consider a point $x\in \OO'\subset X$, and set $y=\pi^{-1}(x)$. We have a standard open neigborhood $V\subset X$ of $x$ and set $\widetilde{V}=\pi^{-1}(V)$ to be an open neighborhood of $y$. Then $\widetilde{V}=U\times S\subset \widetilde{X}$, and $\widetilde{\calD}(\widetilde{V})=\calD(V^{reg})$ by construction. We need to show that $\widetilde{\calD}_{y}/h\widetilde{\calD}_{y}\simeq \widetilde{\calH}_{y}$.
	
	Assume that the singularity $S$ of the codimension $2$ orbit $\OO'\subset\overline{\OO}$ is of type $A_k\cup A_k$. Let $\OO'$ be the corresponding orbit of codimension $2$. For a point $x\in \OO'\subset X$ we have a standard open neighborhood $V\subset X$. Then $V^{reg}=V_1\sqcup V_2$, where $V_i=U\times (S_i/\{0\})\subset V^{reg}$, where $U\subset \OO'$ is open, and $S_i$ is the singularity of type $A_k$. Let $x_1$, $x_2\in \widetilde{X}$ be the two preimages of $x$. We have $\pi^{-1}(V)=\widetilde{V}_1\sqcup \widetilde{V}_2$, where $\widetilde{V}_i=U\times S_i$ is an open neighborhood of $x_i$. By construction, $\widetilde{\calD}(\widetilde{V}_i)=\calD(V_i)$. We will show that $\widetilde{\calD}_{x_i}/h\widetilde{\calD}_{x_i}\simeq \widetilde{\calH}_{x_i}$. The proof for the singularity $S$ of type $A_k$ or $D_k$ can be obtained in the same way. 
	
	We have a natural map $\widetilde{\calD}(\widetilde{V_1}\sqcup \widetilde{V_2})\simeq \calD(V_1\sqcup V_2)\to \calH(V_1\sqcup V_2)\simeq \widetilde{\calH}(\pi^{-1}(V_1\sqcup V_2))$ with the kernel $h\widetilde{\calD}(\widetilde{V_1}\sqcup \widetilde{V_2})$. Note that by the Hartogs extension theorem $\widetilde{\calH}(\pi^{-1}(V_1\sqcup V_2))\simeq \widetilde{\calH}(\widetilde{V_1}\sqcup \widetilde{V_2})$. Standard open neighborhoods $V^i$ of $x$ form a base of open subsets containing the point $x$. Therefore we can define stalks of sheaves at points $x$, $x_1$, $x_2$ as limits of open neighborhoods $V^i$, $\widetilde{V}^i_1$, $\widetilde{V}^i_2$ correspondingly. We have an induced map on stalks $\widetilde{\calD}_{x_1}\oplus\widetilde{\calD}_{x_2}\to \widetilde{\calH}_{x_1}\oplus \widetilde{\calH}_{x_2}$. It is enough to show that this map is surjective.  

	For every $V^i$ we have a natural map $\calD(V^i)\to \calD(V^i_1\sqcup V^i_2)\simeq \widetilde{\calD}(\pi^{-1}(V^i)) $. Taking the limit as $i$ goes to $\infty$ we get a map $\calD_x\to \widetilde{\calD}_{x_1}\oplus \widetilde{\calD}_{x_2}$. Analogously, we have maps $\calH(V^i)\to \calH(V^i_1\sqcup V^i_2)$ and an isomorphism $\calH(V^i_1\sqcup V^i_2)\simeq \calH(\tilde{V^i_1}\sqcup \tilde{V^i_2})$ from the Hartogs extension theorem. Taking the limit we get a map $\calH_x\to \widetilde{\calH}_{x_1}\oplus \widetilde{\calH}_{x_2}$. We have the following commutative diagram 
	\begin{equation*}\label{normalization}
	\xymatrix{
		&\calD_x \ar[r] \ar[d]& \widetilde{\calD}_{x_1}\oplus \widetilde{\calD}_{x_2} \ar[d]& & &\\
		&\calH_x \ar[r] & \widetilde{\calH}_{x_1}\oplus \widetilde{\calH}_{x_2} \ar[r] & \calH_{x, {\OO'}}\ar[r]& 0&}\tag{1}
	\end{equation*}
		Here $\calH_{x,{\OO'}}$ stands for the ring of germs of analytic functions on ${\OO'}$ at the point $x$. Note that the lower sequence is exact at the middle. Since $\calD$ is a holomorphic quantization of $X$ the left vertical map is surjective. Therefore it is enough to show that the composition $\phi: \widetilde{\calD}_{x_1}\oplus \widetilde{\calD}_{x_2}\to \widetilde{\calH}_{x_1}\oplus \widetilde{\calH}_{x_2}\to \calH_{x, {\OO'}}$ is surjective. Let us choose an element $f\in \calH_{x,{\OO'}}$. 
		We have a surjective map $\calH_{x, X}\to \calH_{x,{\OO'}}$, and set $g$ to be a preimage of $f$. Consider a standard open neighborhood $V\subset X$ of $x$, such that there is a function $g\in \calD(V)/h\calD(V)\subset \calH(V)$ giving the germ $g\in \calH_{x, X}$. We can choose a lift $\widetilde{g}\in \calD(V)$. Since $V_1\subset V$ is an open subset, we have the restriction map $r: \calD(V)\to\calD(V_1)\simeq \widetilde{\calD}(\widetilde{V_1})$, and set $\widetilde{h}=r(\widetilde{g})$. Consider the germ of $\widetilde{\calD}_{x_1}$ that is represented by $\widetilde{f}$. Then $\phi(\widetilde{f})=f$. So the right vertical arrow in Diagram \ref{normalization} is surjective and $\widetilde{\calD}$ is a quantization of $\widetilde{X}$.
	\subsection{From an analytic quantization to an algebraic one}
Let $\calD$ be an analytic quantization of $\Spec(\CC[\OO])$ and $\cA=\Gamma(\Spec(\CC[\OO]), \calD)$ be the algebra of global sections. We have an action of $\CC^\times$ on $\cA$ and set $\cA_{alg}$ to be the $h$-adic completion of $\cA_{fin}$.  

\begin{prop}\label{alg}
	$\cA_{alg}$ is an algebraic formal quantization of $\CC[\OO]$.
\end{prop}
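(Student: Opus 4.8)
The plan is to verify, one at a time, the defining conditions of a graded formal quantization of the graded Poisson algebra $\CC[\OO]$ from \cref{quantizations} (with $d=1$); everything except the identification of $\cA_{alg}/(h)$ is routine. Write $\widetilde X=\Spec(\CC[\OO])$, so that $\cA=\Gamma(\widetilde X,\calD)$. Since $\calD$ is $\CC[[h]]$-flat, $\cA$ is $h$-torsion free, hence so is its $\CC^\times$-stable $\CC[h]$-subalgebra $\cA_{fin}$; by \cite[Lemma A.2]{appendix} its $h$-adic completion $\cA_{alg}$ is flat over $\CC[[h]]$, and $\cA_{alg}$ is $h$-adically complete and separated by construction. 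Because $h$ has $\CC^\times$-weight $1$, it is a nonzerodivisor on $\cA_{fin}$ and $\cA_{alg}/h^k\cA_{alg}=\cA_{fin}/h^k\cA_{fin}$; the latter is a quotient of the rational module $\cA_{fin}$, so the $\CC^\times$-action on it is rational, and $t.h=th$ is inherited. Finally, the condition $d=1$ for $\widetilde X$ means $[\calD,\calD]\subset h\calD$; taking global sections, and using that $h$-torsion freeness of $\calD$ gives $\Gamma(h\calD)=h\cA$, we get $[\cA,\cA]\subset h\cA$, whence $[\cA_{fin},\cA_{fin}]\subset h\cA_{fin}$ and, by continuity, $[\cA_{alg},\cA_{alg}]\subset h\cA_{alg}$.

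It remains to produce a graded Poisson isomorphism $\cA_{alg}/(h)\simeq\CC[\OO]$. As $h$ is a nonzerodivisor on $\cA_{fin}$, we have $\cA_{alg}/h\cA_{alg}\cong\cA_{fin}/h\cA_{fin}$, a nonnegatively graded algebra. The $\CC^\times$-equivariant quotient map $\cA=\Gamma(\widetilde X,\calD)\to\Gamma(\widetilde X,\calD/h\calD)=\Gamma(\widetilde X,\widetilde\calH)=\calH(\widetilde X)$, the algebra of holomorphic functions on $\widetilde X$, has kernel $h\cA$, and $h\cA\cap\cA_{fin}=h\cA_{fin}$ (again because $h$ is an eigenvector); so it induces an embedding $\cA_{fin}/h\cA_{fin}\hookrightarrow\calH(\widetilde X)$ whose image consists of $\CC^\times$-finite holomorphic functions. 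The geometric input is that this subalgebra is precisely $\CC[\widetilde X]=\CC[\OO]$: decomposing a $\CC^\times$-finite holomorphic function into weight components, a component of negative weight is unbounded near the vertex of the cone $\widetilde X$ (which lies in the closure of every $\CC^\times$-orbit) and therefore vanishes, one of weight $0$ is a constant, and one $f$ of weight $i>0$ has, by \cref{isost} together with the identification of the completed local ring of $\widetilde X$ at its vertex with $\prod_{i\ge 0}\CC[\widetilde X]_i$, a germ at the vertex lying in $\CC[\widetilde X]_i$ — passage to this germ being injective on homogeneous functions because $\widetilde X$ is irreducible. Conversely every regular function is holomorphic.

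For surjectivity of the embedding, note that, $\widetilde X$ being affine, the higher cohomology of coherent sheaves vanishes in the conical topology; feeding this through the $h$-adic filtration of $\calD$, whose successive quotients are copies of $\widetilde\calH$ (up to a twist of the $\CC^\times$-action), shows that $\Gamma(\widetilde X,\calD)\to\calH(\widetilde X)$ is onto. Given homogeneous $f\in\CC[\OO]_j$, lift it to $a_0\in\cA$ and replace $a_0$ by its weight-$j$ component, obtained by averaging $t^{-j}(t.a_0)$ over the compact subtorus $S^1\subset\CC^\times$ and using holomorphy of the action to see that the result, a priori only of $S^1$-weight $j$, is a genuine $\CC^\times$-weight vector; it lies in $\cA_{fin}$ and reduces to $f$ modulo $h$. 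Hence $\cA_{fin}/h\cA_{fin}\cong\CC[\OO]$. This isomorphism matches the gradings (the $\CC^\times$-weights on both sides) and is Poisson: the bracket on $\cA_{alg}/(h)$ is $\frac{1}{h}[\,\cdot\,,\cdot\,]$, which under $\cA_{alg}/h\cA_{alg}\hookrightarrow\calH(\widetilde X)$ becomes the Poisson bracket of $\widetilde\calH$ — this being exactly the assertion that $\theta\colon\calD/h\calD\simeq\widetilde\calH$ is Poisson — whose restriction to $\CC[\OO]$ is the Kirillov--Kostant bracket. Therefore $\cA_{alg}$ is an algebraic formal quantization of $\CC[\OO]$.

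The step I expect to be the main obstacle is the surjectivity just discussed: it relies on the vanishing of $H^1$ of coherent sheaves on the affine scheme $\widetilde X$ computed in the conical topology, and on the compact-torus averaging that upgrades an arbitrary lift of a homogeneous regular function to a $\CC^\times$-finite one. The auxiliary fact that the $\CC^\times$-finite holomorphic functions on the cone $\widetilde X$ are exactly $\CC[\OO]$, although standard, also has to be made precise through the comparison of completed local rings in \cref{isost}.
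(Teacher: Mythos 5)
Your argument follows essentially the same route as the paper: reduce everything to the identification $\cA_{alg}/h\cA_{alg}\simeq\cA_{fin}/h\cA_{fin}\simeq\Gamma(\Spec(\CC[\OO]),\widetilde\calH)_{fin}$, prove surjectivity of $\cA\to\Gamma(\Spec(\CC[\OO]),\widetilde\calH)$ by a cohomology-vanishing argument, and then identify the $\CC^\times$-finite holomorphic functions on the cone with $\CC[\OO]$ via germs at the vertex and \cref{isost} (the paper does this last step by an equivariant lift of the germ to $\CC^n$; your direct use of the completed local ring is equivalent). Two points need adjusting. First, the justification of the surjectivity step is misattributed: $\calD$ here is a \emph{holomorphic} quantization, so the successive quotients of the $h$-adic filtration are copies of the analytic structure sheaf $\widetilde\calH$ on $\Spec(\CC[\OO])_{hol}$, and what is needed is $H^1(\Spec(\CC[\OO]),\widetilde\calH)=0$; this does not follow from vanishing of coherent algebraic cohomology in the conical topology, but from the fact that the analytification of an affine variety is Stein together with Cartan's Theorem B \cite{Cartan1957}, which is exactly what the paper invokes. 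Second, your $S^1$-averaging, used to lift a homogeneous regular function to a $\CC^\times$-weight vector of $\cA$, is a legitimate way to get surjectivity of $\cA_{fin}/h\cA_{fin}\to(\cA/h\cA)_{fin}$ (a point the paper passes over silently), but as stated it rests on the unproved assertion that the $\CC^\times$-action on the global sections $\cA$, respectively on each $\cA/h^k\cA$, is continuous (indeed holomorphic in $t$) for the natural Fr\'echet-type topology; this is standard for equivariant coherent analytic sheaves on Stein spaces and propagates through the $h$-adic limit, but it should be said. With these two repairs the proof is correct and is in substance the paper's proof.
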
 
\begin{proof}
	$\cA_{alg}$ is a complete and separated $\CC[[h]]$-algebra with a $\CC^\times$-action by algebra automorphisms induced from the $\CC^\times$-action on $\cA$. This action is rational on all quotient $\cA_{alg}/(h^k)$, and $[\cA_{alg}, \cA_{alg}]\subset h\cA_{alg}$. Therefore it is enough to show that $\cA_{alg}/h\cA_{alg}\simeq \CC[\OO]$. 
	
	$\Spec(\CC[\OO])$ is an affine scheme and therefore a Stein analytic space. By \cite{Cartan1957}, $H^1(\Spec(\CC[\OO]), \calH)=0$. Therefore $\cA=\Gamma(\Spec(\CC[\OO]), \calD)$ is a formal quantization of $\Gamma(\Spec(\CC[\OO]), \calH)$. Note that $\cA_{alg}/h\cA_{alg}\simeq \cA_{fin}/h\cA_{fin}$ and $h\cA_{fin}=\cA_{fin}\cap h\cA$. Therefore $\cA_{fin}/h\cA_{fin}\simeq (\cA/h\cA)_{fin}=\Gamma(\Spec(\CC[\OO]),\calH)_{fin}$. For any regular function $f\in \CC[\OO]$ we have $f\in \Gamma(\Spec(\CC[\OO]),\calH)_{fin}$. In the opposite direction, for any $f\in \Gamma(\Spec(\CC[\OO]),\calH)$ consider its germ at $0$. It can be $\CC^\times$-equivariantly lifted to a germ $\tilde{f}$ of a function on $\CC^n$ at $0$ for some $n$. If $f$ is $\CC^\times$-finite then $\tilde{f}$ is expressed by a polynomial function, so $f$ is a regular function. Therefore $\Gamma(\Spec(\CC[\OO]),\calH)_{fin}=\CC[\OO]$ and $\cA_{alg}/h\cA_{alg}\simeq \CC[\OO]$. 
\end{proof}
	 
		We can microlocalize $\cA_{alg}$ to an algebraic formal quantization $\calD_{alg}$ of $\Spec(\CC[\OO])$. \cref{analytification}, \cref{extend} and \cref{alg} imply the following theorem.
		\begin{theorem}\label{extension}
			Every $G$-equivariant quantization of $\OO$ can be uniquely extended to a $G$-equivariant quantization of $\Spec(\CC[\OO])$.
		\end{theorem}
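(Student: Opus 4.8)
The plan is to assemble the ingredients already in hand — \cref{ext}, \cref{analytification}, the lifting construction of Section~\ref{extend}, and \cref{alg} — into a single chain, and then to verify that the round trip recovers the quantization one started from. Fix a $G$-equivariant quantization $\calD_\OO$ of $\OO$, let $I\subset U(\fg)$ be the associated ideal, and let $\cJ\subset \calD$ be the corresponding $G$-stable $h$-saturated two-sided ideal in the quantization $\calD$ of $\fg^*$ of \cref{differential operators}. By \cref{analytification} the analytification $\calD_{hol}/\cJ_{hol}$ is a $G$-equivariant holomorphic quantization of the affine scheme $X=\Spec(\gr[U(\fg)/I])$, whose reduced subscheme is $\overline{\OO}$. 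By the discussion of Section~\ref{extend} — the formula $\widetilde\calD=i_*\calD_\OO$ together with the Kraft--Procesi description of the codimension $2$ singularities and \cref{codim 2} — this holomorphic quantization lifts to a $G$-equivariant holomorphic quantization $\overline\calD_{hol}$ of $\widetilde X:=\Spec(\CC[\OO])$. Applying \cref{alg} to $\overline\calD_{hol}$, the $h$-adic completion $\cA_{alg}$ of its $\CC^\times$-finite global sections is a $G$-equivariant algebraic formal quantization of $\CC[\OO]$, and microlocalizing $\cA_{alg}$ gives a $G$-equivariant algebraic quantization $\calD_{alg}$ of $\Spec(\CC[\OO])$. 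Each functor used here — analytification, $i_*$, global sections, microlocalization — carries along the $G$-action, so $\calD_{alg}$ is genuinely $G$-equivariant.

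It remains to check that $\calD_{alg}$ restricts to $\calD_\OO$ on the open orbit $\OO=\widetilde X^{reg}$. Since a $G$-equivariant quantization of the homogeneous space $\OO$ is recovered from its quantum jet bundle (\cref{jet quantization}, Section~\ref{ham}), which in turn is recovered from its fiber at the base point $\chi$ by \cref{fiber}, it suffices to compare completed stalks at $\chi$. By construction the completed stalk of $\calD_{alg}$ at $\chi$ agrees with the one produced in \cref{ext} and \cref{analytification}, namely $U_h^\wedge/I_h^\wedge$ — here one uses that $\chi\in\OO$ is a smooth point and the codimension $1$ criterion \cref{U-W} to identify $(U_h^\wedge/I_h^\wedge)/h$ with $\CC[\OO]_\chi^\wedge$. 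This is exactly the fiber at $\chi$ of the quantum jet bundle $J^\infty\calD_\OO$ (cf.\ the proof of \cref{quantizations of orbit}), whence $\calD_{alg}|_\OO\simeq\calD_\OO$. For uniqueness: a $G$-equivariant quantization of $\widetilde X$ restricting to $\calD_\OO$ is determined by its restriction to $\widetilde X_2$ via \cref{codim 2}, and on $\widetilde X_2$ a quantization of the smooth locus $\OO$ extends in at most one way by \cref{codim 2} applied once more; hence any two extensions of $\calD_\OO$ coincide.

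The genuinely delicate content is not this last assembly but the ingredients it rests on: \cref{analytification}, which produces a holomorphic quantization of the possibly non-reduced and non-normal scheme $X$ out of the algebraic ideal $\cJ$ (via the stalkwise isomorphism $\widehat\cJ_x\simeq\widehat\cJ_{hol,x}$ and faithful flatness of analytification, \cref{quantum faithfully flat}), and the local analysis of Section~\ref{extend} at the codimension $2$ symplectic leaves, where for classical $\fg$ one must treat the non-normal singularities of type $A_k\cup A_k$ by invoking the Hartogs extension theorem to glue the lift across the two branches. Once those are in place, together with \cref{alg}, the theorem follows by the formal argument above.
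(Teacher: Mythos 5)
Your assembly is the same as the paper's: \cref{ext} and \cref{analytification} produce a holomorphic quantization of $X=\Spec(\gr[U(\fg)/I])$, the Kraft--Procesi analysis of Section~\ref{extend} together with \cref{codim 2} lifts it to $\widetilde X=\Spec(\CC[\OO])$, and \cref{alg} plus microlocalization returns an algebraic quantization; this is exactly the chain the paper invokes, and your additional checks (recovering $\calD_\OO$ on $\OO$ by comparing completed stalks at $\chi$ through the jet-bundle formalism, and tracking the $G$-action through each step) are in the right spirit.

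The one step that is not justified as written is uniqueness. \cref{codim 2} gives unique extension from $\widetilde X_2$ to $\widetilde X$, i.e.\ across leaves of codimension at least $4$ (its proof uses $\depth_Z(\calH)=\codim Z\ge 4$); it says nothing about extending from the smooth locus $\OO$ across the codimension-$2$ leaves, so ``\cref{codim 2} applied once more'' does not cover the step from $\OO$ to $\widetilde X_2$ --- indeed, existence of an extension across codimension-$2$ leaves is precisely the nontrivial content of Section~\ref{extend}, and it is sensitive to the geometry there (hence the special treatment of $A_k\cup A_k$). Uniqueness across the codimension-$2$ locus is nevertheless true, but it needs its own short argument: Section~\ref{extend} shows that $\widetilde\calD=i_*\calD_\OO$ is a quantization of $\widetilde X_2$, and for any quantization $\calD'$ of $\widetilde X_2$ restricting to $\calD_\OO$ the adjunction map $\calD'\to i_*(\calD'|_{\OO})=i_*\calD_\OO$ is a morphism of quantizations, since modulo $h$ it becomes the canonical map $\calH_{\widetilde X_2}\to i_*\calH_{\OO}$, which is an isomorphism because $\widetilde X$ is normal and the complement of $\OO$ has codimension at least $2$ (Hartogs/Riemann extension); $h$-adic completeness and flatness, or the remark that any morphism of quantizations is automatically an isomorphism, then finish the comparison, and uniqueness over all of $\widetilde X$ follows by combining this with \cref{codim 2}. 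With that substitution your proof matches the paper's.
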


\subsection{Proof of \cref{theorem}}\label{proof}

	Now we can proof the theorem stated in the introduction. 
	\begin{theorem}\cite[Theorem 2]{Premet-Topley} 
		Let $\fg$ be classical. Then $\fJ\fd^1(\cW)^\Gamma$ is in bijection with the set of points of an affine space.
	\end{theorem}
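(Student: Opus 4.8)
The plan is to assemble the main theorem from the chain of bijections and classification results established in the body of the paper. First I would invoke \cref{quantizations of orbit}: there is a natural bijection between $\fJ\fd^1(\cW)^\Gamma$ and the set of $G$-equivariant (formal, graded) quantizations of the orbit $\OO$. This reduces the problem to describing the latter set. Next I would apply \cref{extension}: every $G$-equivariant quantization of $\OO$ extends uniquely to a $G$-equivariant quantization of the normalization $\Spec(\CC[\OO])$. The content of Sections \ref{analytification of quantization} and \ref{lift} is precisely that this extension map is a bijection — injectivity is clear since $\OO$ is the smooth (open, dense) locus of $\Spec(\CC[\OO])$ and a quantization is determined by its restriction there, while surjectivity is the "lift" direction, so I would simply cite it.

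Then I would observe that once we are on $\Spec(\CC[\OO])$, the $G$-equivariance is automatic: one checks that $G$ acts on $\Spec(\CC[\OO])$ by preserving the conical structure, and by the uniqueness of quantum comoment maps (\cref{quantum comoment}, together with the fact that $H^1_{\DR}$ of the relevant loci vanishes) and the rigidity of graded quantizations, every graded formal quantization of $\Spec(\CC[\OO])$ carries a canonical $G$-action lifting the one on functions. Hence the set of $G$-equivariant quantizations of $\OO$ is in natural bijection with the set of isomorphism classes of formal graded quantizations of $\Spec(\CC[\OO])$. But by \cref{classifiaction} — which combines Losev's classification by $\fP/W$ from \cite{Losev4} with the Chevalley–Shephard–Todd theorem, since $W$ acts on the affine space $\fP$ as a reflection group so that $\fP/W$ is again an affine space — the latter set is in bijection with the points of an affine space. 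Composing all of these bijections yields the theorem.

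I expect the main obstacle to be making the identification "$G$-equivariant quantizations of $\OO$ $\leftrightarrow$ graded quantizations of $\Spec(\CC[\OO])$" fully rigorous: one must check both that the extension of \cref{extension} is genuinely compatible with the grading (so that a $G$-equivariant quantization of $\OO$, which is graded via the $\CC^\times$-action commuting with $G$, extends to a \emph{graded} quantization), and conversely that an arbitrary graded quantization of the affinization automatically acquires the $G$-equivariant structure — the delicate point being uniqueness of that structure, so that the correspondence is bijective rather than merely surjective. The rest is bookkeeping: chasing through \cref{quantizations of orbit}, \cref{extension}, and \cref{classifiaction} and noting that each arrow is a bijection of sets. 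I would present the proof as essentially a one-paragraph composition of these three facts, with a short remark addressing the equivariance-versus-grading compatibility.
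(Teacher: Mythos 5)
Your proposal is correct and follows essentially the same route as the paper's proof: it composes \cref{quantizations of orbit}, the restriction/extension bijection of \cref{extension}, and \cref{classifiaction}. The one delicate point you flag — that a graded formal quantization of $\Spec(\CC[\OO])$ carries a unique $G$-equivariant structure — is settled in the paper not by the comoment-map argument you sketch (\cref{quantum comoment} already presupposes a $G$-action on the quantization), but by citing \cite[Section 3.6]{Losev4}.
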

	\begin{proof}
		Suppose that $\widetilde{\calD}$ is a graded formal quantization of $\Spec(\CC[\OO])$. 
		By \cite[Section 3.6]{Losev4} the action of $G$ on the sheaf $\cO$ uniquely extends to the action of $G$ on $\widetilde{\calD}$ such that $\widetilde{\calD}$ become a $G$-equivariant quantization. Restricting to the open subset $\OO$ we get a $G$-equivariant quantization $\calD$ of $\OO$. By \cref{extension} it has a unique $G$-equivariant extension to $\Spec(\CC[\OO])$ that is $\widetilde{\calD}$. Therefore the constructed maps $\calD\to \widetilde{\calD}$, $\widetilde{\calD}\to \calD$ between the sets of $G$-equivariant quantizations of $\OO$ and quantizations of $\Spec(\CC[\OO])$ are inverse to each other. 
		By \cref{classifiaction}, the set of $G$-equivariant quantizations of $\Spec(\CC[\OO])$ is in bijection with the set of points on an affine space. From \cref{quantizations of orbit} the set of quantizations of $\OO$ is in bijection with $\fJ\fd^{1}(\cW)^\Gamma$. The present theorem follows.
	\end{proof}
	\begin{rmk}
		In fact, Premet and Topley considered an algebra $U(\fg, e)^{ab}_\Gamma$ that is the quotient of $\cW_{ab}$ by the ideal $I_{\Gamma}$ generated by all $\phi-\phi^\gamma$ with $\phi\in \cW_{ab}$ and proved that $U(\fg, e)^{ab}_\Gamma$ is a polynomial algebra. Then $\fJ\fd^1(\cW)^\Gamma$ is identified with the set of points of $\Specm(U(\fg, e)^{ab}_\Gamma)$ that is an affine space.
	\end{rmk} 

	\section{Apppendix}
	In this section we give a proof of \cref{codim 2}.
	
		\begin{prop}
		Let $X$ be a normal Cohen-Macaulay affine Poisson variety with finitely many symplectic leaves. Let $X_2$ be the union of the smooth part $X^{reg}$ and all symplectic leaves of codimension $2$. Then any (holomorphic) quantization of $X_2$ uniquely extends to a (holomorphic) quantization of $X$. 
	\end{prop}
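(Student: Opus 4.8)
The plan is to produce the extension by pushing forward along the open immersion $j\colon X_2\hookrightarrow X$ (note $X\setminus X_2$ is closed since there are only finitely many leaves). Given a (holomorphic) quantization $\calD$ of $X_2$, set $\overline{\calD}:=j_*\calD$. Because $j$ is an open immersion, $j^*\overline{\calD}$ is again $\calD$, so the only things to prove are that $\overline{\calD}$ is a quantization of $X$ and that it is the unique one restricting to $\calD$.

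The one nontrivial ingredient is a cohomological vanishing for $\cO_{X_2}$. The complement $Z:=X\setminus X_2$ is a union of symplectic leaves; since $X$ is generically symplectic it is even-dimensional and its symplectic leaves have even codimension, so every leaf contained in $Z$ has codimension at least $4$ and hence $\mathrm{codim}_X Z\ge 4$. As $X$ is Cohen--Macaulay, the local cohomology sheaves $H^i_Z(\cO_X)$ vanish for $i<\mathrm{codim}_X Z$, hence for $i\le 3$; equivalently $j_*\cO_{X_2}=\cO_X$ and $R^1 j_*\cO_{X_2}=R^2 j_*\cO_{X_2}=0$. The holomorphic case is identical, with the algebraic depth--codimension bound replaced by its analytic counterpart for coherent sheaves on normal Cohen--Macaulay complex spaces.

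Next I would propagate this vanishing up the $h$-adic filtration. For each $k$, $\CC[[h]]$-flatness of $\calD$ identifies the successive quotients $h^m\calD/h^{m+1}\calD$ for $0\le m<k$ with $\cO_{X_2}$, so $\calD/h^k\calD$ is an iterated extension of $k$ copies of $\cO_{X_2}$. Applying $j_*$ to $0\to h\calD/h^k\calD\to \calD/h^k\calD\to\cO_{X_2}\to 0$ and inducting on $k$ — using $R^1 j_*\cO_{X_2}=0$ at each step, which in particular makes the transition maps $j_*(\calD/h^{k+1}\calD)\to j_*(\calD/h^k\calD)$ surjective — shows that $j_*(\calD/h^k\calD)$ is an iterated extension of $k$ copies of $\cO_X$ and that $R^1 j_*(\calD/h^k\calD)=0$. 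Since the inverse system $\{j_*(\calD/h^k\calD)\}_k$ has surjective transition maps, $R^1\varprojlim$ of it vanishes, and together with the vanishing of $R^1 j_*$ on each term this gives $R^1 j_*\calD=0$ and $\overline{\calD}=j_*\calD=\varprojlim_k j_*(\calD/h^k\calD)$ with $j_*(\calD/h^k\calD)=\overline{\calD}/h^k\overline{\calD}$. From here everything is formal: $\overline{\calD}$ is a sheaf of $\CC[[h]]$-algebras (a pushforward of one); it is $h$-torsion-free, hence $\CC[[h]]$-flat, since $\calD$ is and $j_*$ is left exact; it is $h$-adically complete and separated by the inverse-limit description; $\overline{\calD}/h\overline{\calD}\simeq j_*\cO_{X_2}=\cO_X$; the containment $[\overline{\calD},\overline{\calD}]\subset h^d\overline{\calD}$ is inherited from $X_2$; and the induced Poisson bracket on $\cO_X$ agrees with the given one because they agree on the dense open $X_2$. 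For uniqueness, any quantization $\overline{\calD}'$ of $X$ with $\overline{\calD}'|_{X_2}\simeq\calD$ admits the adjunction map $\overline{\calD}'\to j_*j^*\overline{\calD}'=\overline{\calD}$, which reduces modulo $h$ to the identity of $\cO_X$ (equality of $\cO_X$-linear maps is checked on $X_2$); being a map of $\CC[[h]]$-flat, $h$-adically complete and separated algebras that is an isomorphism modulo $h$, it is an isomorphism.

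The main obstacle is really the single point everything rests on: the vanishing $R^1 j_*\cO_{X_2}=R^2 j_*\cO_{X_2}=0$ — the codimension-$4$ Hartogs phenomenon for a Cohen--Macaulay variety — together with the observation that removing symplectic leaves of codimension at least $3$ from $X_2$ actually removes a set of codimension at least $4$. Once this is granted, the rest is routine bookkeeping with $h$-adic filtrations and inverse limits, and the algebraic and holomorphic arguments run in parallel.
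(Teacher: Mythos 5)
Your proposal is correct in substance and rests on the same pillar as the paper's argument: push the quantization forward along the open embedding $j\colon X_2\hookrightarrow X$ and use Cohen--Macaulayness plus the fact that $X\setminus X_2$ has codimension at least $3$ to get the depth/local-cohomology vanishing (in fact only $j_*\cO_{X_2}=\cO_X$ and $R^1j_*\cO_{X_2}=0$, i.e.\ $H^1_Z=H^2_Z=0$, are needed; your extra claim $R^2j_*\cO_{X_2}=0$ and the evenness-of-codimension remark are never used). Where you genuinely diverge is in the $h$-adic bookkeeping: you induct on the finite quotients $\calD/h^k\calD$, using $R^1j_*\cO_{X_2}=0$ at each step, and then pass to $\varprojlim$; the paper instead works with sections over Stein opens $Y$, proves $H^1(Y_2,\calH_{Y_2})=0$ (Lemma \ref{coh_zero}) via Hartogs and $H^2_Z=0$, and then uses a completeness trick (Lemma \ref{Cech}: a surjection $H^1(Y_2,h\calD)\to H^1(Y_2,\calD)$ is automatically injective) to conclude exactness of $0\to\Gamma(Y_2,h\calD)\to\Gamma(Y_2,\calD)\to\Gamma(Y,\calH_X)\to 0$ without ever filtering by powers of $h$. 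Your route buys a uniform treatment of the algebraic and holomorphic cases (the paper defers the algebraic one as well known) and, importantly, an actual proof of uniqueness via the adjunction map $\overline{\calD}'\to j_*j^*\overline{\calD}'$, which the paper's proof does not spell out. One step you should tighten: for an inverse system of \emph{sheaves}, surjectivity of the transition maps does not by itself give vanishing of $R^1\varprojlim$ or surjectivity of $\varprojlim_k j_*(\calD/h^k\calD)\to j_*(\calD/h^m\calD)$ (lifting germs step by step may force the neighborhoods to shrink without a common open). The fix is exactly the paper's device: check everything on a basis of affine/Stein opens $Y$, where the Leray spectral sequence together with $R^1j_*\cO_{X_2}=0$ gives $H^1(Y_2,\cO)=0$, so the section-level system $\Gamma(Y_2,\calD/h^k\calD)$ has surjective transition maps and Mittag--Leffler applies; with that justification your identification $\overline{\calD}/h^k\overline{\calD}\simeq j_*(\calD/h^k\calD)$, and hence the whole argument, goes through.
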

	\begin{proof}
		That is a well-known fact for the algebraic case, we will prove the proposition for holomorphic quantizations. Let $i: X_2\to X$ be the natural embedding. Let $\calD$ be a holomorphic quantization of $X_2$. We will show that $i_*\calD$ is a holomorphic quantization of $X$. Consider an open subset $Y\subset X$. We set $Y_2=Y\cap X_2$ and define $A=\Gamma(Y,\calH_X)$. We want to show that $\Gamma(Y_2, \calD)=\Gamma(Y, i_*\calD)$ is a quantization of $A$ for every Stein $Y$. Then we have the short exact sequence $0\to \Gamma(Y, hi_*\calD)\to \Gamma(Y,i_*\calD)\to \Gamma(Y, \calH_X)\to 0$ for all Stein open subsets $Y$. Such subsets form a base of topology for $X$, so it implies the short exact sequence of sheaves $0\to hi_*\calD\to i_*\calD\to \calH_X\to 0$, and therefore $i_*\calD$ is a quantization of $X$. So it is enough to show that $\Gamma(Y_2, \calD)$ is a quantization of $A$.
		
		We have a short exact sequence $0\to h\calD\to \calD\to \calH_{Y_2}\to 0$ of sheaves on $Y_2$. Applying the left exact functor $\Gamma$, we get a long exact sequence of cohomology $0\to \Gamma(Y_2,h\calD)\to \Gamma(Y_2,\calD)\to \Gamma(Y_2, \calH_{Y_2})\to H^1(Y_2,h\calD)\to H^1(Y_2, \calD)\to \ldots$. From the Hartogs extension theorem $\Gamma(Y_2, \calH_{Y_2})\simeq A$. We will show that $H^1(Y_2, \calH_{Y_2})=0$ and deduce that the map $H^1(Y_2,h\calD)\to H^1(Y_2,\calD)$ is an isomorphism. First, we need the following lemma.
		\begin{lemma}\label{anCohMac}
			Let $X$ be a Cohen-Macaulay algebraic scheme. Then $X_{hol}$ is a Cohen-Macaulay analytic space.
		\end{lemma}
		\begin{proof}
			Consider a point $x\in X_{hol}$. We need to show that the local ring $\calH_{X,x}$ is Cohen-Macaulay. Let $\widehat{\calH}_{X,x}$ be the completion of $\calH_{X,x}$ with respect to the maximal ideal. By \cite[Theorem 17.5]{matsumura_1987}, $\calH_{X,x}$ is Cohen-Macaulay if and only if $\widehat{\calH}_{X,x}$ is Cohen-Macaulay. The same argument shows that $\widehat{\cO}_{X,x}$ is Cohen-Macaulay. By \cref{isost} $\widehat{\calH}_{X,x}\simeq \widehat{\cO}_{X,x}$. Lemma follows.
		\end{proof}
		
		\begin{lemma}\label{coh_zero}
			$H^1(Y_2, \calH_{Y_2})=0$.
		\end{lemma}
		\begin{proof}
			We set $Z=Y-Y_2$ to be the intersection of the union of all symplectic leaves of codimension greater or equal $4$ with $Y$. We have a long exact sequence of sheaf cohomology 
			
			\begin{equation*}\label{sequence1}
			\ldots\to H^1(Y, \calH_Y)\to H^1(Y_2, \calH_{Y_2})\to H^2_Z(Y, \calH_Y)\to \ldots. \tag{1}
			\end{equation*}  $Y$ is a Stein analytic scheme, so $H^1(Y, \calH_Y)=0$. Analogously to \cite[Theorem 3.8]{Hartshorne1967}, $H^i_Z(Y, \calH_Y)=0$ for all $i<\depth_Z(\calH_Y)$. By \cref{anCohMac} $X_{hol}$ is Cohen-Macaulay, so $\depth_Z(\calH_Y)=\codim Z$ and $H^2_Z(Y, \calH_Y)=0$. The first and third terms in (\ref{sequence1}) are zero, so $H^1(Y_2, \calH_{Y_2})=0$.
		\end{proof}
		
		We have a short exact sequence $0\to h\calD\to \calD\to \calH_{Y_2}\to 0$ on $Y_2$ that induces the long exact sequence of cohomology 
		\begin{equation*}\label{sequence2}
		0\to \Gamma(Y_2,h\calD)\to \Gamma(Y_2,\calD)\to A\to H^1(Y_2, h\calD)\to H^1(Y_2, \calD)\to H^1(Y_2, \calH_{Y_2}) \tag{2}
		\end{equation*} 
		The last term in (\ref{sequence2}) is trivial, so we have a surjective map $H^1(Y_2, h\calD)\to H^1(Y_2, \calD)$. We need the following lemma.
		
		\begin{lemma}\label{Cech}
			Let $\calD$ be a sheaf of complete and separated $\CC[[h]]$-algebras on a compact analytic space $X$. Suppose that the natural map $f: {H}^d(X, h\calD)\to {H}^d(X, \calD)$ is surjective for some $d$. Then $f$ is an isomorphism.
		\end{lemma}
		\begin{proof}
			We use an argument from \cite[Lemma 5.6.3]{Gordon2014}. Since $X$ is compact, we can compute sheaf cohomology by Cech complex. Consider a Leray cover $\{U_i\}$ of $X$. Let $C^i$, $Z^i$ and $B^i$ be the cochains, cocycles and coboundaries for the Cech cohomology of $\calD$ with respect to $\{U_i\}$. The natural map $f: {hZ^d}/{hB^d}=\check{H}^d(X, h\calD)\to \check{H}^d(X, \calD)={Z^d}/{B^d}$ is surjective, so $hZ^d+B^d=Z^d$. We want to show that $hZ^d\cap B^d=hB^d$. 
			
			Consider an element $x=hz_0\in hZ^d\cap B^d$. We can decompose $z_0$ into a sum $hz_1+b_1$ for some $z_1\in Z^d$ and $b_1\in B^d$. Then $x=h^2z_1+hb_1$, and we set $x_1=hb_1$. Analogously, we have a decomposition $z_1=hz_2+b_2$, and then $x=h^3z_2+hb_2+hb_1$. We set $x_2=hb_2+hb_1$. Applying the same procedure we get a sequence $x_k$, such that $x=x_k+h^{k+1}z_k$ and $x_k\in hB^d$. Since $Z^d$ is complete and separated, this sequence converges to $x$. Therefore $x\in hB^d$, and $hZ^d\cap B^d=hB^d$. 
			
			Then $h({Z^d}/{B^d})={hZ^d}/({B^d\cap hZ^d})={hZ^d}/{hB^d}$, and the natural embedding $h({Z^d}/{B^d})\to {Z^d}/{B^d}$ is surjective. Therefore $f$ is an isomorphism. 
		\end{proof}
		Applying the lemma above to (\ref{sequence2}) we get that the map $H^1(Y_2, h\calD)\to H^2(Y_2,\calD)$ is an isomorphism, and $\Gamma(Y_2,\calD)$ is a quantization of $A$. 
	\end{proof}

\begin{sloppypar} \printbibliography[title={References}] \end{sloppypar}

\end{document}